\newcommand\SY[1]{{\noindent\color{red}  #1}}
\newcommand\sy[1]{\marginpar{\tiny \SY{#1}}} 
\newcommand{\dbar}{d\hspace*{-0.10em}\bar{}\hspace*{0.14em}}
\newtheorem{theorem}{Theorem}[section]
\newtheorem{definition}{Definition}[section]
\newtheorem{proposition}[theorem]{Proposition}
\newtheorem{lemma}[theorem]{Lemma}
\newtheorem{corollary}[theorem]{Corollary}
\newtheorem{claim}[theorem]{Claim}
\newtheorem{remark}[theorem]{Remark}
\newtheorem{theoremalph}{Theorem}
\renewcommand\thetheoremalph{\Alph{theoremalph}}
\renewcommand\thecorollaryalph{\Alph{corollaryalph}}
\def\AA{{\mathbb A}} \def\BB{{\mathbb B}} \def\CC{{\mathbb C}}
\def\DD{{\mathbb D}}
\def\EE{{\mathbb E}} \def\FF{{\mathbb F}} \def\GG{{\mathbb G}}
\def\HH{{\mathbb H}}
\def\II{{\mathbb I}} \def\JJ{{\mathbb J}} \def\KK{{\mathbb K}}
\def\LL{{\mathbb L}}
\def\MM{{\mathbb M}} \def\NN{{\mathbb N}} \def\OO{{\mathbb O}}
\def\PP{{\mathbb P}}
\def\QQ{{\mathbb Q}} \def\RR{{\mathbb R}} \def\SS{{\mathbb S}}
\def\TT{{\mathbb T}}
\def\UU{{\mathbb U}} \def\VV{{\mathbb V}} \def\WW{{\mathbb W}}
\def\XX{{\mathbb X}}
\def\YY{{\mathbb Y}} \def\ZZ{{\mathbb Z}}
\def\Si{\Sigma}
\def\La{\Lambda}
\def\la{\lambda}
\def\De{\Delta}
\def\de{\delta}
\def\Om{\Omega}
\def\Ga{\Gamma}
\def\Th{\Theta}
\def\Diff{\hbox{Diff} }
\def\cA{{\cal A}}  \def\G{{\cal G}} \def\cM{{\cal M}} \def\cS{{\cal S}}
\def\cB{{\cal B}}  \def\cH{{\cal H}} \def\cN{{\cal N}} \def\cT{{\cal T}}
\def\cC{{\cal C}}  \def\cI{{\cal I}} \def\cO{{\cal O}} \def\U{{\cal U}}
\def\cD{{\cal D}}  \def\cJ{{\cal J}} \def\cP{{\cal P}} \def\V{{\cal V}}
\def\cE{{\cal E}}  \def\K{{\cal K}} \def\cQ{{\cal Q}} \def\cW{{\cal W}}
\def\F{{\cal F}}  \def\cL{{\cal L}} \def\cR{{\cal R}} \def\cX{{\cal X}}
\def\cY{{\cal Y}}  \def\cZ{{\cal Z}}
\def\vdashv{\vdash\!\dashv}
\def\triangleq{\stackrel{\triangle}{=}}
\newcommand{\id}{\operatorname{Id}}
\newcommand{\loc}{\operatorname{loc}}
\newcommand{\BBone}{{1\!\!1}}
\newcommand{\emb}{\operatorname{Emb}}
\newcommand{\sing}{{\operatorname{Sing}}}
\newcommand{\diff}{{\operatorname{Diff}}}
\newcommand{\orb}{\operatorname{Orb}}
\def\diff{\operatorname{Diff}}
\def\Int{\operatorname{Int}}
\def\dim{\operatorname{dim}}
\def\ind{\operatorname{Ind}}
\def\Per{\operatorname{Per}}
\def\diam{\operatorname{Diam}}
\def\Sing{\operatorname{Sing}}
\def\orb{\operatorname{Orb}}
\def\supp{\operatorname{Supp}}
\def\ud{\operatorname{d}}
\def\e{{\varepsilon}}
\def\CR{{\mathrm {CR}}}
\def\Interior{\operatorname{Interior}}
\def\lip{\operatorname{Lip}}
\def\Det{\operatorname{det}}
\def\Jac{\operatorname{Jac}}
\def\D{\operatorname{D}}
\def\Leb{\operatorname{Leb}}
\def\homeo{\operatorname{Homeo}}
\def\Basin{\operatorname{Basin}}
\def\Cl{\operatorname{Closure}}
\def\wh{\widehat}
\begin{document}
\begin{sloppypar}
\title{Variety of physical measures in partially hyperbolic systems with multi 1-D centers}

\author{Zeya Mi and Hangyue Zhang\footnote{H. Zhang is the corresponding author. We are partially supported by National Key R\&D Program of China(2022YFA1007800) and NSFC 12271260.
}}

\date{}
\maketitle

\begin{abstract} 
We provide two robust examples of globally partially hyperbolic systems with a multi one-dimensional center splitting, for which all Gibbs $u$-states are hyperbolic and the number of physical (Sinai–Ruelle–Bowen) measures is fixed. In the second example, the physical measures have different unstable indices, and the supports of those with index at least 2 can be adjusted to lie on a non-uniformly hyperbolic set using the techniques developed for the first example. Such partially hyperbolic systems were previously studied in \cite{MC25, Bur25}, where the existence and finiteness of physical measures were established under the assumption of hyperbolicity of the Gibbs $u$-states or the SRB measures, respectively.
\end{abstract}
%\tableofcontents	

\section{Introduction}\label{SEC:1}

Let $M$ be a compact Riemannian manifold and $f: M\to M$ a $C^{1+}$ diffeomorphism, meaning that $f$ is $C^1$ and $Df$ is H{\"o}lder continuous. 
An $f$-invariant measure $\mu$ is said to be a \emph{Sinai-Ruelle-Bowen(SRB)} measure if it has positive Lyapunov exponents $\mu$-almost everywhere and its conditional measures(see Rokhlin \cite{R62}) along Pesin unstable manifolds are absolutely continuous w.r.t. Lebesgue measures. Equivalently, it satisfies the Pesin entropy formula, i.e., its entropy coincides with the sum of positive Lyapunov exponents \cite{ly}. Among them, the hyperbolic ergodic SRB measures are physically observable, as they can capture the statistical behavior of points with positive Lebesgue measure. More precisely, an $f$-invariant measure $\mu$ is called a \emph{physical measure} if its \emph{basin}
$$
B(\mu,f)=\Big\{x\in M: \lim_{n\to +\infty}\frac{1}{n}\sum_{i=0}^{n-1}\varphi(f^i(x))=\int \varphi {\rm d}\mu,\quad \forall\, \varphi\in C(M)\Big\}
$$
exhibits positive Lebesgue measure.  

SRB/physical measures were firstly introduced and studied by Sinai, Ruelle and Bowen for uniformly hyperbolic systems \cite{Sin72, Rue76, Bow75, BoR75}.
A fundamental problem in smooth dynamics is to determine which systems exhibit these measures as in hyperbolic systems, see e.g. \cite{pa0,pa,v2} for related conjectures. 
%A program proposed by Palis \cite{pa0,pa} conjectured that every system can be approximated by one having finitely many SRB/physical measures, whose basins cover a full Lebesgue measure subset of the phase space. 
Beyond uniform hyperbolicity, the existence and finiteness of SRB/physical measures have been resolved for various partially hyperbolic systems in recent decades, and 
%such as  systems with (weak) non-uniformly expanding center \cite{ABV00, AD}, with mostly expanding and mostly contracting center \cite{bv,AV15,MCY17}, or exhibiting one-dimensional centers with extra hyperbolic hypothesis \cite{VY13,HYY19,MC25,Bur25}, among others.
numerous specific examples are created, including partially hyperbolic systems with non-uniformly expanding center \cite{ABV00,AD}, mostly contracting diffeomorphisms \cite{car93,Kan,bv,dvy16}, mostly expanding diffeomorphisms \cite{AV15,yjg19}.
%and partially hyperbolic systems within mixed hyperbolic behaviors on centers, such as \cite{MCY17,MC23}.

%Due to its abundance in systems away from tangency, partially hyperbolic diffeomorphism with (multi) one dimensional centers attract people's attention in recent years, see e.g. \cite{CSY15}.
SRB/physical measures were also investigated for a typical kind of partially hyperbolic systems--whose center bundle can be decomposed into one-dimensional sub-bundles in a dominated way.
%We are interested in finding examples of partially hyperbolic systems that admit multi one-dimensional centers. 
For such systems, the SRB measures were established in \cite{CY05, CMY22}.  Under the hyperbolic hypothesis of Gibbs $u$-states(see Definition \ref{u-gibbs}) or SRB measures \cite{HYY19, MC25, Bur25}, it is shown that such system admits finitely many physical measures whose basins cover a full Lebesgue measure subset of the ambient manifold. The main task of this paper is to create non-trivial examples satisfying the assumptions in \cite{MC25, Bur25}.

Denote by ${\rm Diff}^{r}(M)$ the space of $C^r$ diffeomorphism on $M$, for $r\ge 1$. Here we produce examples consisting of diffeomorphisms on tours $\TT^n$, $n\ge 2$.
% on the circle. {\color{red}The emergence of non-trivial examples serves as a testament to the broad applicability of general theoretical results,  and has the potential to deepen our understanding of the generalized theoretical assumptions.}
%{\color{red}We say that the {\it unstable index (or simply the index)} of an ergodic measure is the number of subbundles along which the Lyapunov exponents are positive. Two ergodic measures are said to have different unstable indices if the numbers differ.}
Let us present our first class of examples, each of them possessing a unique physical measure supported on some partially hyperbolic invariant set.

\begin{theoremalph}\label{mainone}
There exists $f_0\in {\rm Diff}^{\infty}(\mathbb{T}^5)$ and a $C^1$ open neighborhood $\U$ of $f_0$ such that any $f\in\U$ admits a partially hyperbolic splitting
$$
T(\mathbb{T}^5)=E^{u}\oplus_{\succ} E_1^c\oplus_{\succ} E_2^c\oplus_{\succ} E^{s}
$$
with the following properties:
\begin{enumerate}
\item \label{dimension} ${\rm dim}E_{1}^c={\rm dim}E_{2}^c=1$;
\smallskip
\item \label{nonun} neither $E_{1}^c$ nor $E_{2}^c$ is uniformly contracting/expanding;
\smallskip
\item \label{hyperbolic} any Gibbs $u$-state of $f$ is hyperbolic.
\smallskip
%\item \label{empirical} for ${\rm Leb}$-a.e. $x\in \mathbb{T}^5$, any measure of $\mathscr{M}(x,g)$ admits only positive Lyapunov exponents along $E_{1,g}^c$ and negative Lyapunov exponents along $E_{2,g}^c$.
\end{enumerate}

Moreover, any diffeomorphism in $\U\cap {\rm Diff}^{1+}(\mathbb{T}^5)$ admits a unique ergodic physical measure, whose Lyapunov exponents along $E_1^c$ are all positive and along $E_2^c$ are all negative, and its basin covers a full Lebesgue measure subset of $\TT^5$.
\end{theoremalph}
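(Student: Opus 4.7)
The plan is to obtain $f_0$ as a two-stage derived-from-Anosov (DA) perturbation of a linear hyperbolic automorphism of $\mathbb{T}^5$, performing one $C^\infty$ DA bump per prospective center direction, and then to verify hyperbolicity of Gibbs $u$-states by adapting the single-center argument of \cite{HYY19} separately to each factor.

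\textbf{Construction and items (1)--(2).} I would begin with a hyperbolic matrix $L \in \mathrm{SL}_5(\mathbb{Z})$ whose five eigenvalues are simple, real and positive, ordered as $\mu_u > \mu_1 > 1 > \mu_2 > \mu_s$, with $\dim E^u_L + \dim E^s_L = 3$. The eigenspace decomposition already realises a linear dominated splitting $E^u_L \oplus_{\succ} E_1^c \oplus_{\succ} E_2^c \oplus_{\succ} E^s_L$ with both middle factors one-dimensional. At two disjoint fixed points $p_1 \neq p_2$ of $L$ I would then perform independent DA perturbations: around $p_1$, a bump shrinking $Df|_{E_1^c}(p_1)$ from $\mu_1$ to some $\mu_1' \in (\mu_2,1)$; around $p_2$, a bump inflating $Df|_{E_2^c}(p_2)$ from $\mu_2$ to some $\mu_2' \in (1,\mu_1)$. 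The interval constraints are exactly what is needed to preserve the dominated splitting pointwise. Taking the bumps sufficiently $C^1$-small yields an open neighborhood $\U$ of $f_0$ on which item (1) persists by continuity of the splitting, and item (2) is witnessed directly: $E_1^c$ is expanding away from $p_1$ but contracting at $p_1$, and dually for $E_2^c$.

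\textbf{Item (3) -- the main obstacle.} The core difficulty is showing that every Gibbs $u$-state of every $f \in \U$ is hyperbolic with $\lambda^{c_1} > 0 > \lambda^{c_2}$. The argument runs separately on each center. Along $E_1^c$: any $f_0$-invariant measure with non-positive exponent in $E_1^c$ must concentrate in the local DA zone around $p_1$, and the only extremal such measure is $\delta_{p_1}$; since $\delta_{p_1}$ has no absolutely continuous conditionals along the strong-unstable manifold $W^{uu}$, it is not a Gibbs $u$-state. A Pliss-lemma estimate on $u$-disks -- noting that the DA zone has small diameter while $\log\|Df|_{E_1^c}\|$ is bounded below by a positive constant outside it -- then forces $\lambda^{c_1} > 0$ for every Gibbs $u$-state. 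A symmetric argument along $E_2^c$ gives $\lambda^{c_2} < 0$. The delicate step is $C^1$-openness: upper semicontinuity of the set of Gibbs $u$-states in $f$ yields that any Gibbs $u$-state of a perturbation weakly approximates one of $f_0$, so the strict sign inequalities persist; coupling of the two DA zones under perturbation, which could a priori create new invariant sets with degenerate exponents, is ruled out by the uniform domination between $E_1^c$ and $E_2^c$ arranged in the construction via the strict choices of $\mu_1', \mu_2'$.

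\textbf{Physical measure.} Having verified items (1)--(3) throughout $\U \cap \diff^{1+}(\mathbb{T}^5)$, I would apply the main theorem of \cite{MC25, Bur25} to obtain finitely many ergodic physical measures, whose basins together cover $\mathbb{T}^5$ Lebesgue-almost everywhere, and whose center Lyapunov exponents obey the sign pattern produced in the previous paragraph. Uniqueness follows from minimality of the strong unstable foliation: this property is inherited from $L$ and is $C^1$-robust for maps in $\U$, so any two ergodic physical measures would share their unstable saturation and thereby coincide, yielding the single ergodic physical measure announced in the statement.
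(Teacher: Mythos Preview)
Your approach differs substantially from the paper's, and two of your steps have genuine gaps.

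The paper does not perturb a hyperbolic automorphism of $\mathbb{T}^5$. Instead $f_0$ is a \emph{single} DA bump applied to $B\times K$ on $\mathbb{T}^4\times\mathbb{S}^1$, where $B=A^2\times A$ is linear Anosov and $K$ is Morse--Smale on $\mathbb{S}^1$ with one sink and one source; the bundle $E_2^c$ is essentially $T\mathbb{S}^1$. This structure does the heavy lifting. The Morse--Smale factor forces an attractor $\mathcal{A}_f$ and a repeller $\mathcal{R}_f$ for every $f\in\U$, with $E_2^c$ \emph{uniformly} contracting on $\mathcal{A}_f$ and uniformly expanding on $\mathcal{R}_f$, so $\lambda^{c_2}\ne 0$ for every invariant measure with no Gibbs $u$-state argument at all. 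For $\lambda^{c_1}>0$ the paper exploits that $E^u$ coincides with the unstable bundle of the first $A^2$ factor: the projection $\pi:\mathbb{T}^5\to\mathbb{T}^2$ semiconjugates $f_0$ to $A^2$, so every ergodic Gibbs $u$-state pushes forward to $\mathrm{Leb}_{\mathbb{T}^2}$, giving the quantitative bound $\mu(U_0)\le\mathrm{Leb}_{\mathbb{T}^2}(\pi(U_0))$, arranged small by the choice of $\delta_0$ and $\lambda$.

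\textbf{Gap in the $E_2^c$ step.} Your ``symmetric argument'' is not actually symmetric: Gibbs $u$-states are defined through $E^u$, and the Pliss/$u$-disk mechanism you invoke for $\lambda^{c_1}>0$ is of mostly-expanding type and does not dualize to a mostly-contracting conclusion for $E_2^c$. Absolute continuity along $W^{uu}$ constrains the leafwise conditionals, not the transverse distribution, so nothing in your outline prevents a Gibbs $u$-state from giving enough mass to the DA zone at $p_2$ to make $\lambda^{c_2}\ge 0$. You would need a separate mechanism here---a factor map, an attractor--repeller dichotomy as in the paper, or a direct Bonatti--Viana argument---and the coexistence of two independent DA zones makes this harder, not easier. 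Incidentally, the claim that $\delta_{p_1}$ is ``the only extremal'' invariant measure with $\lambda^{c_1}\le 0$ is already false: a DA bump typically creates additional periodic orbits of the new index near $p_1$.

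\textbf{Gap in uniqueness.} Your bumps are $C^0$-small but necessarily $C^1$-large (they move eigenvalues across $1$), so $f_0$ is not a $C^1$-perturbation of $L$, and minimality of the strong unstable foliation for $L$ says nothing about $f_0$. Even if $W^{uu}$ happened to be minimal for $f_0$, $C^1$-robustness of that minimality is a nontrivial theorem that fails in general for partially hyperbolic systems. The paper's uniqueness argument is entirely different: it locates a hyperbolic fixed point $\mathbf{p}\in\mathcal{A}_{f_0}$ whose stable manifold is dense in the trapping region $\mathbb{T}^4\times[-\eta,\eta]$, persists this by continuation to every $f\in\U$, produces for each physical measure a uniform-size unstable disk inside $\mathcal{A}_f$ via a Pliss-type estimate, and then uses the Inclination Lemma together with absolute continuity of the stable holonomy to force any two such disks---hence any two physical measures---to share basins.
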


%The first example we construct belongs to the case of Gibbs $u$-states with different unstable indices, where the unique physical measure is supported on a partially hyperbolic invariant set (non-uniform). 

We also obtain a family of partially hyperbolic systems having various different type physical measures, we state them as follows:

\begin{theoremalph}\label{maintwo}
Given $m\ge 2$, there exists $f_0\in {\rm Diff}^{\infty}(\mathbb{T}^{2m+1})$ and a $C^1$ open neighborhood $\U$ of $f_0$ such that
any $f\in \U$ exhibits a partially hyperbolic splitting
$$
T(\mathbb{T}^5)=E^{u}\oplus_{\succ} E_1^c\oplus_{\succ}\cdots \oplus_{\succ} E_m^c\oplus_{\succ} E^{s}, \quad {\rm dim}E_i^c=1,\quad 1\le i \le m,
$$
for which any Gibbs $u$-state is hyperbolic.

Moreover, any diffeomorphism in $\U\cap {\rm Diff}^{1+}(\mathbb{T}^{2m+1})$ admits exactly $m$ physical measures with distinct unstable indices\footnote{Here the \emph{unstable index} of an ergodic measure $\mu$ means the dimension of the unstable sub-bundle along which all the Lyapunov exponents are positive w.r.t $\mu$.}, whose union of basins cover a full Lebesgue measure subset of $\TT^{2m+1}$.
\end{theoremalph}

These two distinct types of examples (Theorems~\ref{mainone} and \ref{maintwo}) arise from perturbations of skew products. Here the base dynamics are given by a Morse–Smale system on the circle, while the fiber dynamics are linear Anosov diffeomorphisms on torous. We develop the surgery procedure originally introduced in Smale’s construction of DA-diffeomorphisms \cite{Smale} to modify the unstable indices of certain hyperbolic fixed points. As the expansion along the center direction transitions into contraction in different regions, this leads to non-uniform behavior along the center and gives rise to different types of physical measures—either supported on non-uniformly hyperbolic sets (as in Theorem~\ref{mainone}) or on uniformly hyperbolic sets with distinct unstable indices (as in Theorem~\ref{maintwo}). This phenomenon results in markedly different distributions of trajectories for Lebesgue almost every point.

%We remark that the techniques developed in the proof of Theorem~\ref{mainone} extend naturally to the examples in Theorem~\ref{maintwo}, allowing the supports of the physical measures with unstable index at least 2 to be adjusted to lie on non-uniformly hyperbolic sets.

%These examples arise from perturbations of the skew product where the base dynamics is a Morse-Smale system on the circle, and the fiber dynamics is a linear Anosov diffeomorphism on the torus $\TT^{2m}$ $(m\ge 2)$. By modifying the surgery used in the construction of DA-diffeomorphism from Smale \cite{Smale}, we alter the unstable indices of certain hyperbolic fixed points. This induces both the non-uniformity along center directions, and the multiplicity of physical measures. This phenomenon further highlights the intricate richness and diversity in the distribution of trajectories for Lebesgue almost every point. 

Several other examples for partially hyperbolic systems having hyperbolic Gibbs $u$-states were constructed in recent years, see e.g. \cite{car93,Kan,bv,VY13, dvy16,AV15,yjg16,yjg19,MC23}. 
%Under the hyperbolicity of Gibbs $u$-states, the same result of physical measures has been achieved for partially hyperbolic systems in a sequence of works \cite{car93,Kan,bv,vy13, dvy16,AV15,yjg19,MCY17,MC23}. 
In contrast to these works, a key difference in our examples from Theorems \ref{mainone} and \ref{maintwo} is the emergence of Gibbs u-states characterized by multiple distinct unstable indices.
%Unlike these works, our examples in Theorems  demonstrate Gibbs $u$-states that exhibit multiple distinct unstable indices. 
In particular, our examples do not belong to the classes of mostly contracting/expanding diffeomorphisms, introduced in \cite{bv,AV15}, or their mixture type as in \cite{MCY17,MC23}. We mention that, Viana-Yang \cite{VY13} provide examples of partially hyperbolic systems with one-dimensional center and exhibit Gibbs $u$-state/physical measures having zero central Lyapunov exponents.

%Without deformation, such skew-products and its nearby are proven by Viana and J.Yang \cite{VY13} to have finitely many physical measures with the basin covering property, where these systems admit one-dimensional center, whose central Lyapunov exponents of the Gibbs $u$-state are either negative or zero. In contrast to \cite{VY13}, by making surgery similar to \cite{},
%by perturbing near hyperbolic fixed points, we vary 

\medskip
This paper is organized as follows. In Section \ref{pre}, some notions and known results
will be recalled. In Section \ref{InitialA}, we shall introduce an Anosov diffeomorphism meeting specific conditions that will be employed in the construction of examples for both Theorem \ref{mainone} and Theorem \ref{maintwo}. Section \ref{exampleone} is devoted to give a detailed construction of examples provided in Theorem \ref{mainone}. To show Theorem \ref{maintwo}, in Section \ref{twop} we construct examples for the special case of $m=2$ by employing a perturbation technique different from that used in the proof of Theorem \ref{mainone}. The general case of examples in Theorem \ref{maintwo} will be constructed in Section \ref{mg} through an inductive approach, building on the special case of $m=2$. We provide a detailed proof of Proposition \ref{DAz} in Appendix \ref{app}, which is stated in Section \ref{twop}.

\section{Preliminary}\label{pre}

Throughout this section, consider $M$ as a compact Riemannian manifold without boundary. As before, ${\rm Diff}^{r}(M)$ denotes the space of $C^r$ diffeomorphism on $M$, for $r\ge 1$.

\subsection{Lyapunov exponents}
Given $f\in {\rm Diff}^{1}(M)$, $x\in M$ and $v\in T_xM\setminus \{0\}$, we define the Lyapunov exponent of $(x,v)$ as
$$
\lambda(x,v)=\lim_{n\to \pm \infty}\frac{1}{n}\log \|Df^n(x)v\|
$$
whenever the limit exists. By Oseledets theorem \cite{O68}, for every $f$-invariant measure $\mu$, $\lambda(x,v)$ is well defined for every $v\in T_xM\setminus \{0\}$ and $\mu$-almost every $x\in M$.
Let $E$ be a one-dimensional $Df$-invariant sub-bundle, denote the Lyapunov exponent of $E$ at point $x\in M$ by
$$
\chi(x,E)=\lim_{n\to +\infty}\frac{1}{n}\log \|Df^n|_{E(x)}\|
$$
when the limit exists. By the Birkhoff ergodic theorem, for every $f$-invariant measure $\mu$, $\chi(x,E)$ is well defined $\mu$-almost everywhere, and we have
$$
\chi(\mu,E):=\int \chi(x,E) {\rm d}\mu=\int \log \|Df|_{E}\|{\rm d}\mu.
$$
When $\mu$ is ergodic, then $\chi(\mu,E)= \chi(x,E)$ for $\mu$-almost every $x$.

\subsection{Partial hyperbolicity}
Let $f\in {\rm Diff}^1(M)$ and $\Lambda$ be an $f$-invariant compact subset of $M$. A $Df$-invariant splitting $T_{\Lambda}M=E\oplus_{\succ} F$ over $\Lambda$ is called a \emph{dominated splitting} if $E$ dominates $F$ in the sense that there exist constants $\lambda\in (0,1)$ and $C>0$ such that for every $x\in \Lambda$ and $n\in \NN$, one has 
$$
\|Df^n|_{F(x)}\|\cdot \|Df^{-n}|_{E(f^n(x))}\|\le C\lambda^n.
$$
We say $\Lambda$ is partially hyperbolic if there exists a dominated splitting $T_{\Lambda}M=E^u\oplus_{\succ} E_1\oplus_{\succ}\cdots \oplus_{\succ} E_k\oplus_{\succ} E^s$ for some $k\ge 1$ on the tangent bundle over $\Lambda$, for which 
\begin{itemize}
\item $E^u$ is uniformly expanding and $E^s$ is uniformly contracting,
\item at least one of $E^u$ and $E^s$ is nontrivial.
\end{itemize}

It is well known that $E^s$ and $E^u$ are uniquely integrable to $f$-invariant foliations, called strong stable and strong unstable foliations, whose leaves are called strong stable and strong unstable manifolds. 

A diffeomorphism $f\in {\rm Diff}^1(M)$ is called partially hyperbolic if $M$ is a partially hyperbolic set. In the present paper, we will focus on partially hyperbolic diffeomorphisms within multi one-dimensional centers, i.e., ${\rm dim}E^i=1$ for $1\le i \le k$. 

\subsection{Gibbs $u$-states}

As proposed by \cite{CYZ18}, we give the definition of Gibbs $u$-state via partial unstable entropy for the $C^1$ partially hyperbolic diffeomorphism exhibiting $E^u$. Denote by $h_{\mu}^u(f)$ the partial entropy along unstable foliation tangent to $E^u$(see \cite{HHW,yjg16} for more details).

\begin{definition}\label{u-gibbs}
We call an $f$-invariant measure $\mu$ a Gibbs $u$-state if it satisfies the partial unstable entropy formula:
$$
h_{\mu}^u(f)=\int \log |{\rm det}Df|_{E^u}|{\rm d}\mu.
$$
\end{definition}

According to \cite{L84}, if $f$ is $C^{1+}$, then $\mu$ is a Gibbs $u$-state if and only if its conditional measures along strong unstable manifolds are absolutely continuous w.r.t. Lebesgue measures of these manifolds.

Given $f\in {\rm Diff}^1(M)$ and $x\in M$, denote by $\mathscr{M}(x,f)$ the set of limit measures of the empirical measures $\{\frac{1}{n}\sum_{i=0}^{n-1}\delta_{f^i(x)}\}_{n\in \NN}$.
Let  $\G^u(f)$ be the space of Gibbs $u$-states of $f$. The following result will be used throughout this paper.

\begin{proposition}\label{basicu}\cite{BDV05,CYZ18,HYY19}
Let $f$ be a $C^1$ partially hyperbolic diffeomorphism exhibiting $E^u$. Then
\begin{enumerate}[(1)]
\item\label{cpu} any ergodic component of the Gibbs $u$-state is also a Gibbs $u$-state of $f$;
\item\label{upp} if $f_n$ converges to $f$ in $C^1$-topology, $\mu_n\in \mathcal{G}^u(f_n)$ and $\mu_n\to \mu$ in weak$^{\ast}$-topology, then $\mu\in \G^u(f)$;
\item \label{ufull} for Lebesgue almost every $x\in M$, $\mathscr{M}(x,f)\subset \G^u(f)$.
\end{enumerate}
\end{proposition}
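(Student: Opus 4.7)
All three parts rest on the characterization that the Gibbs $u$-state property is equality in the Ruelle-type bound $h_\mu^u(f)\le \int\log|\det Df|_{E^u}|\,{\rm d}\mu$, together with the affineness of both sides in $\mu$. Part \ref{cpu} then follows from the ergodic decomposition: writing $\mu=\int \mu_x\,{\rm d}\mu(x)$, both sides integrate over the components, the Ruelle-type bound holds $\mu$-a.e. in $x$, while the Gibbs hypothesis on $\mu$ forces equality in the $\mu$-average, and hence equality $\mu$-a.e. in $x$. Thus each ergodic component is itself a Gibbs $u$-state.

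For part \ref{upp}, I would combine two ingredients. First, upper semicontinuity of $(f,\mu)\mapsto h_\mu^u(f)$ in the $C^1\times\text{weak-}\ast$ topology at $C^1$ partially hyperbolic systems, which is established in \cite{CYZ18} via continuity of the strong unstable lamination under $C^1$ perturbation (Hirsch-Pugh-Shub) and a careful choice of measurable partition subordinate to the unstable foliation. Second, continuity of $\int\log|\det Df|_{E^u}|\,{\rm d}\mu$ in $(f,\mu)$, which follows once one notes that $E^u(f_n)\to E^u(f)$ uniformly on $M$ and $f_n\to f$ in $C^1$. Combining $\limsup_n h_{\mu_n}^u(f_n)\le h_\mu^u(f)$ with the Ruelle-type bound for $\mu$ and the Gibbs identities for $\mu_n$ forces equality in the limit, so $\mu\in\G^u(f)$.

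For part \ref{ufull}, the approach is the Bonatti-Viana-Yang disintegration argument. Fix a finite cover of $M$ by foliation boxes of the strong unstable foliation; by absolute continuity of Lebesgue with respect to local $u$-holonomies, it suffices to verify the conclusion for Lebesgue-a.e. $x$ in a typical unstable plaque $D$. A Fubini-type argument combined with the mean ergodic theorem shows that any weak-$\ast$ limit of the empirical averages $\frac{1}{n}\sum_{i=0}^{n-1}\delta_{f^i(x)}$ coincides with a weak-$\ast$ limit of the averaged pushforwards $\frac{1}{n}\sum_{i=0}^{n-1} f^i_*(\Leb_D)$. Limits of the latter are Gibbs $u$-states: in the $C^{1+}$ regime this is the classical bounded-distortion argument producing absolutely continuous conditionals along unstable plaques, and in the $C^1$ case one instead identifies the limit via the partial entropy characterization using volume growth along unstable plaques.

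\textbf{Main obstacle.} The delicate point is the semicontinuity in part \ref{upp} under purely $C^1$ regularity. In the $C^{1+}$ setting one could identify Gibbs $u$-states with measures whose conditionals along strong unstable leaves are absolutely continuous and argue directly with densities; under $C^1$ alone one has to track the partial entropy along a continuously varying family of unstable plaques as $f_n\to f$, and control the defect in the entropy formula via the Hirsch-Pugh-Shub plaque families. My plan is to invoke the semicontinuity theorem of \cite{CYZ18} for this step rather than reprove it.
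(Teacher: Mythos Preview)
The paper does not prove this proposition at all: it is stated with the citations \cite{BDV05,CYZ18,HYY19} and used as a black box. Your outline is consistent with the arguments in those references (affineness of both sides of the partial entropy formula for part~\eqref{cpu}; the upper semicontinuity of $(f,\mu)\mapsto h^u_\mu(f)$ from \cite{CYZ18} for part~\eqref{upp}; and the empirical-measure result of \cite{CYZ18,HYY19} for part~\eqref{ufull}), so there is nothing to compare against in this paper.
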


%\begin{theorem}
%Assume that $f$ is a $C^{1+}$ diffeomorphism with a partially hyperbolic splitting
%$$
%T_{\Lambda}M=E^u\oplus E_1\oplus\cdots E_k\oplus E^s$ 
%$$
%for some $k\ge 1$ satisfying 
%\end{theorem}
%Let $E$ be a one-dimensional $Df$-invariant center, denote the central Lyapunov exponent of $E$ at point $x\in M$ by
%$$
%\chi(x,E)=\lim_{n\to +\infty}\frac{1}{n}\log \|Df^n|_{E(x)}\|
%$$
%when the limit exists. By the Birkhoff ergodic theorem, for every $f$-invariant measure, $\chi(x,E)$ is well defined $\mu$-almost everywhere, and
%$$
%\chi(\mu,E):=\int \chi(x,E) {\rm d}\mu=\int \log \|Df|_{E}\|{\rm d}\mu.
%$$
%When $\mu$ is ergodic, then $\chi(\mu,E)= \chi(x,E)$ for $\mu$-almost every $x$.
%
%For diffeomorphism $f\in {\rm Diff}^1(M)$ and $x\in M$, denote by $\mathscr{M}(x,f)$ the set of limit measures of the empirical measures $\{\frac{1}{n}\sum_{i=0}^{n-1}\delta_{f^i(x)}\}_{n\in \NN}$.
%
%Let $f$ be a partially hyperbolic diffeomorphims exhibiting the strong unstable direction $E^u$.

\section{Initial transitive Anosov diffeomorphism}\label{InitialA}

In this section, we construct a specific Anosov diffeomorphism as the foundational step for building the examples in both Theorems \ref{mainone} and \ref{maintwo}.
Let us begin by fixing $\delta_0>0$ sufficiently small so that $\delta_0<1/10000$ and consider the truncation function $\varphi:\RR\to \RR$ with the following properties:
\begin{itemize}
    \item \( \varphi(x) = \varphi(-x) \), i.e., \( \varphi \) is symmetric about \( x = 0 \).
    \smallskip
    \item \( \varphi(x) \) is strictly monotonic for \( x \in ( \delta_0/4, \delta_0/2 ) \).
    \smallskip
    \item \( \varphi(x) = 1 \) for \( x \in \left[ 0, \delta_0/4 \right] \), and \( \varphi(x) = 0 \) for \( x \ge \delta_0/2 \).
\end{itemize}
We will make use the following two functions
$$
H(x,y)=\varphi(x)(\varphi(y)+y\cdot \varphi'(y)), 
$$
$$
R(x,y,z)=\varphi(x)\cdot \varphi(y)\cdot(\varphi(z)+z\cdot \varphi(z))
$$
defined on $\RR^2$ and $\RR^3$, respectively. 
%Take a constant
%$$
%C=\max_{(x,y,z)\in \RR^3}\{|H(x,y)|, |R(x,y,z)|\}.
%$$
%as they are continuous. 

Let $A_0$ be the linear Anosov diffeomorphism on $\TT^2=\RR^2/\ZZ^2$ induced by 
$
\begin{pmatrix}
2 & 1\\
1 & 1 
\end{pmatrix},
$
which admits two eigenvalues 
$$
\lambda_0=\frac{3+\sqrt{5}}{2},\quad \lambda_0^{-1}=\frac{3-\sqrt{5}}{2}.
$$
Note that each point admits stable and unstable manifolds with respect to \( A_0 \), which are tangent to the eigenspaces associated with the expanding and contracting eigenvalues, respectively. Let us introduce \(\mathbb{T}^2\)-isometric coordinates $(x,y)$ on an open neighborhood $U$ of the fixed point $0\in \TT^2$, such that points with coordinates $(x,0)$ are in the unstable manifold of $0$ and points with coordinates $(0,y)$ are in the stable manifold of $0$. Assume that $[-\delta_0,\delta_0]^2\subset U$.

Let us fix $N\in \NN$ large enough such that 
$$
A:=A_0^N,\quad \lambda:=\lambda_0^N
$$ 
exhibit the properties as follows:
\begin{enumerate}[(H1)]
\item \label{pou} $A$ admits a fixed point $p\in \TT^2$ outside $U$, 
\item \label{one}
$$
\lambda \ge 6,
$$
\item \label{two}
$$
\lambda^{2}\ge 3(C+1)\lambda-\frac{3}{2}C,
$$
where \(C\) is a bound of the functions $H$ and $R$. 
\item \label{three}
$$
\frac{\log \lambda}{\log 3\lambda}>  {\rm Leb}_{\TT^2}([-\delta_0,\delta_0]^2),
$$
where ${\rm Leb}_{\TT^2}$ denotes the normalized Lebesgue measure on $\TT^2$.
\end{enumerate}

The product map $B = A^{2} \times A : \TT^2\times \TT^2  \to \TT^2\times \TT^2$ admits four eigenvalues $\lambda^2$,$\lambda$,$\lambda^{-1}$,$\lambda^{-2}$, with the associated eigenspaces
${\cE}^{uu},{\cE}^u,{\cE}^{s},{\cE}^{ss}$. Thus, $B$ is a transitive Anosov diffeomorphism exhibiting the partially hyperbolic splitting 
$$
T(\TT^4)={\cE}^{uu}\oplus_{\succ} {\cE}^{u}\oplus_{\succ} {\cE}^s\oplus_{\succ} {\cE}^{ss}.
$$
%Since \( B \) is  a 4-dimensional torus automorphism induced by a linear Anosov map on \( \mathbb{R}^4 \),
There exist foliations $\F^{uu},\F^u,\F^s,{\F}^{ss}$ tangent to ${\cE}^{uu}, {\cE}^{u}, {\cE}^{s},{\cE}^{ss}$, respectively. Denote by $\F^{\ast}(x)$ the leaf of $\F^{\ast}$ containing $x\in \TT^4$, for every $\ast\in \{uu,u,s,ss\}$.

There exists an isometric coordinate chart $V\subset U\times U$ on \( \mathbb{T}^4 \), given by 
\[
 \mathcal{F}^{uu}_{\delta_0}(0) \times \mathcal{F}^u_{\delta_0}(0) \times \mathcal{F}^s_{\delta_0}(0) \times \mathcal{F}^{ss}_{\delta_0}(0),
\]
with each \( \mathcal{F}^\ast_{\delta_0}(0) \) denoting the local plaque of size \( \delta_0 \) of the corresponding (un)stable foliation, centered at the fixed point \( 0 \). For convenience, we identify \( V \) with the cube \( [-\delta_0, \delta_0]^4 \).  
Under this identification, we can assume that
$$
B(x, y, z, w) = (\lambda^2 x, \lambda y, \lambda^{-1} z, \lambda^{-2} w)   \quad \textrm{if}~(x, y,z,w) \in [-\delta_0, \delta_0]^4.
$$
For the sake of clarity, we can regard this as an action lifted to \( \mathbb{R}^4 \).

\section{Diffeomorphisms within a unique physical measure}\label{exampleone}

\subsection{Construction of $\{F_k\}$}

Let $B = A^{2} \times A : \TT^2\times \TT^2  \to \TT^2\times \TT^2$ be the the linear Anosov diffeomorphism given in Section \ref{InitialA}. 
Recall that $\lambda$ is a constant satisfying (H\ref{one})-(H\ref{three}). Let $K$ be a $C^{\infty}$ Morse-Smale diffeomorphism on $\mathbb{S}^1=[0,1]/\{0,1\}$, whose non-wandering set is consists of a sink $a_1=0$ and a source $a_2=1/2$, for which
\begin{enumerate}[(K1)]
\item\label{Kone}
$$
\lambda^{-1}<\|DK(x)\|<\lambda,\quad x\in \SS^1,
$$
\item\label{Ktwo}
$$
\|DK(x)\|<\frac{3}{2}\lambda^{-1},\quad x\in [-\delta_0,\delta_0],
$$
\item
$$
\|DK(x)\|>\frac{2}{3}\lambda,\quad x\in [1/2-\delta_0,1/2+\delta_0].
$$
\end{enumerate}
%$$
%\lambda^{-N}<\|DK(a_1)\|<\frac{3}{2}\lambda^{-N}<1<\|DK(a_2)\|<\lambda^N
%$$
%
%$$
%\Omega=\left\{a_i: K(a_i)=a_i, \lambda^{-N}<\|DK(a_1)\|<\frac{3}{2}\lambda^{-N}<1<\|DK(a_2)\|<\lambda^N, a_1=0, a_2=1\right\}
%$$
%where $\mathbb{S}^1$. Assume also that 
%$$
%0<\lambda<\min_{x\in \mathbb{S}^1}\|DK(x)\|<1<\max_{x\in \mathbb{S}^1}\|DK(x)\|<\lambda.
%$$
%For sufficiently small $\varepsilon>0$, take $\delta>0$ such that 
%\begin{equation}\label{fordelta}
%\lambda+\varepsilon\le \|DK(p)\|\le 1/2-\varepsilon, \quad \forall~ p\in [-3\delta, 3\delta].
%\end{equation}
%\begin{equation}\label{forelya}
%{\rm Leb}_{\TT^2}([-3\delta,3\delta]^2)<\frac{\log \lambda}{\log 3 \lambda}.
%\end{equation}
%
%Let \( \varphi: \mathbb{R} \to \mathbb{R} \) be a \( C^\infty \) truncation function with the following properties:
%\begin{itemize}
%    \item \( \varphi(x) = \varphi(-x) \), i.e., \( \varphi \) is symmetric about \( x = 0 \).
%    \smallskip
%    \item \( \varphi(x) \) is strictly monotonic for \( x \in \left( \frac{\delta}{2}, \delta \right) \).
%    \smallskip
%    \item \( \varphi(x) = 1 \) for \( x \in \left[ 0, \frac{\delta}{2} \right] \), and \( \varphi(x) = 0 \) for \( x \ge \delta \).
%\end{itemize}
%Define
%$$
%H(x,y)=\varphi(x)(\varphi(y)+y\cdot \varphi'(y)),\quad (x,y)\in \RR\times \RR.
%$$
%Observe that there is $M>0$ such that
%\begin{equation}\label{MH}
%|H(x,y)|\le M,\quad \forall (x,y)\in \RR\times \RR.
%\end{equation}
%
%Fix $N\in \NN$ sufficiently large such that 
%\begin{equation}\label{M}
%\lambda^N\ge 2(M+1)\lambda-M.
%\end{equation}
%\marginpar{To be fixed future!}
We then obtain the diffeomorphism $F=B\times K$ on $\TT^5$, which admits the partially hyperbolic splitting 
$$
T(\TT^5)={\cE}^{uu}\oplus_{\succ} {\cE}^{u}\oplus_{\succ} T\mathbb{S}^1 \oplus_{\succ} {\cE}^{s} \oplus_{\succ} {\cE}^{ss}.
$$
% Moreover, for every $1\le i \le 4$, denote by $\F^i$ be the uniquely foliation tangent to $E_i$, and $\F^i(x)$ the leaf of $\F^i$ containing $x\in \TT^5$.
%
%
%consider Anosov diffeomorphism $B = A^{2N} \times A^N : \TT^4  \to \TT^4$, which admits four eigenvalues 
%$$
%\alpha_1=\lambda^2,\quad \alpha_2=\lambda, \quad \alpha_3=\lambda^{-1},\quad \alpha_4=\lambda^{-2}.
%$$
%%Note that $p=0$ is a fixed point of $B$ in $\TT^4$.

For each fixed point $z\in \TT^5$, $F=B\times K$ acts locally on the product of the manifolds $\F^{uu}(z)\times \F^{u}(z)\times \SS^1(z)\times \F^s(z)\times \F^{ss}(z)$ according to the following rule:
$$
F(x_1,x_2,x_3,x_4,x_5)=(\lambda^2 x_1,\lambda x_2, K(x_3), \lambda^{-1} x_4,\lambda^{-2} x_5),
$$
where \(x = (x_1, x_2, x_3, x_4, x_5) \in \mathcal{F}^{uu}(z) \times \mathcal{F}^u(z) \times \SS^1(z) \times \mathcal{F}^s(z) \times \mathcal{F}^{ss}(z)\), and \(\SS^1(z)\) is the circle leaf containing the point \(z\), which is tangent to \(T\SS^1\) at every point.

%For the fixed $\delta_0$, denote by $\F^{*}_{\delta_0}(0)$ the open disk of radius $\delta_0$ centered at $0$ inside $\F^{*}(0)$, for every $*\in \{uu,u,s,ss\}$. Writing also $\SS^1_{\delta_0}(0)=(-\delta_0,\delta_0)$.
Setting
$$
U_0= [-\delta_0, \delta_0]^5=\F^{uu}_{\delta_0}(0)\times \F^{u}_{\delta_0}(0)\times \SS^1_{\delta_0}(0) \times \F^s_{\delta_0}(0)\times \F_{\delta_0}^{ss}(0),
$$
which is a neighborhood of the fixed point $0$ in $\TT^5$. 
%For simplicity, we introduce also the coordinates by identifying $U_0$ to the 
%product space $\F^{uu}_0\times \F^{u}_0\times \mathbb{S}^1\times \F^{s}_0\times \F^{ss}_0,$ where $\F^{\ast}_0$ is the connected component of $\F^{\ast}(0)\cap U_0$ containing $0$ for every $\ast\in \{uu,u,s,ss\}$. We have $[-2\delta,2\delta]^5\subset U_0$ by construction. We can rewrite $F$ on $U_0$ by
%$$
%F(x)=(\lambda^2 x_1,\lambda x_2, K(x_3), \lambda^{-1} x_4,\lambda^{-2} x_5)
%$$
%for $x=(x_1,x_2,x_3,x_4,x_5)\in U_0.$
Now we construct a family of diffeomorphisms $\{F_k\}_{k\in \NN}$ by deforming $F$ inside $U_0$. 
For $k\in \NN$, let $F_k$ be defined as follows:
\begin{itemize}
\smallskip
\item $F_k$ agrees with $F$ for points outside $U_0$, 
\smallskip
\item if $x=(x_1,x_2,x_3,x_4,x_5)\in U_0$,
put
$$
F_k(x)=\left(\lambda^2 x_1,P_k(x),K(x_3),\lambda^{-1}x_4,\lambda^{-2}x_5\right),
$$
where 
$$
P_k(x) = \varphi(kx_2)\cdot \varphi\left(\sqrt{\sum_{j=1,j\neq 2}^5x_j^2}\right) \cdot \left( \frac{1}{2}- \lambda\right)x_2 + \lambda x_2.
$$
\end{itemize}

\medskip
To verify that $F_k$ is a $C^{\infty}$ diffeomorphism, we need the following result.

\begin{lemma}\label{splitting}
It holds that
\begin{equation}\label{peb}
\frac{\partial}{\partial x_2}P_k(x)\in \left[\frac{1}{2}, \frac{\lambda^2}{3}\right],~ \forall~x\in U_0,\quad\frac{\partial}{\partial x_2}P_k(0)=\frac{1}{2};
\end{equation}
\begin{equation}\label{foru} 
\frac{\partial }{\partial x_i}P_k(x)=0~ \textrm{if}~x_i=0, \quad
\lim_{k\to \infty}\sup_{x\in U_0}\left\{\frac{\partial }{\partial x_i}P_k(x)\right\}=0,\quad i\neq 2;
\end{equation}
%and 
%\begin{equation}\label{out}
%\frac{\partial}{\partial x_2}P_k(x)=\lambda>1, \quad x\notin U_0.
%\end{equation}
\end{lemma}

\begin{proof}
Given $x=(x_1,\cdots,x_5)\in U_0$, we have
\begin{eqnarray*}
\frac{\partial }{\partial x_2}P_k(x) &=& \varphi\left(\sqrt{\sum_{j=1,j\neq 2}^5x_j^2}\right)\left(\frac{1}{2} - \lambda\right) \left(\varphi(kx_2) + k x_2 \cdot \varphi'(kx_2) \right) + \lambda\\
&=& H\left(\sqrt{\sum_{j=1,j\neq 2}^5x_j^2}, kx_2\right)\left(\frac{1}{2}-\lambda\right)+\lambda.
\end{eqnarray*}
We see directly that
$$
\frac{\partial}{\partial x_2}P_k(0)=\frac{1}{2}.
$$
Observe that
$$
0\le \varphi(p)\le 1, \quad p\cdot\varphi'(p)\le 0, \quad \forall ~p\in \RR,
$$
and $-C \le H(x,y) \le 1$ whenever $(x,y)\in \RR^2$.
It follows that
$$
-C\left(\frac{1}{2} - \lambda\right) + \lambda\ge\frac{\partial }{\partial x_2}P_k(x)\ge \left(\frac{1}{2}-\lambda\right)+\lambda=\frac{1}{2}.
$$
Consequently, by the choices of $\lambda$,  
$$
\frac{\partial }{\partial x_2}P_k(x)\le -C\left(\frac{1}{2} - \lambda\right) + \lambda\le \frac{\lambda^2}{3}.
$$
Thus, we get (\ref{peb}).

\medskip
Now we show \eqref{foru}. A simple computation yields
$$
\frac{\partial}{\partial x_i}P_k(0)=0, \quad i\neq 2.
$$
For every $x=(x_1,\cdots,x_5)\in U_0\setminus\{0\}$, we have 
$$
\displaystyle
\frac{\partial }{\partial x_i}P_k(x)= \varphi(kx_2) \cdot \left( \frac{1}{2} - \lambda \right) x_2 \cdot \varphi'\left( \sqrt{\sum_{j=1,j\neq 2}^5x_j^2} \right) \cdot \frac{x_i}{\sqrt{\sum_{j=1,j\neq 2}^5x_j^2}},\quad i\neq 2.
$$
We thus obtain the first part of \eqref{foru} by definition. Moreover,
\begin{eqnarray*}
\left|\frac{\partial }{\partial x_i}P_k(x)\right|&\le& |\varphi(kx_2)\cdot x_2|\cdot \sup_{p\in \RR}|\varphi'(p)|\cdot (\lambda-1/2)\\
&\le & \delta/2k\cdot \sup_{p\in \RR}|\varphi'(p)|\cdot (\lambda-1/2),
\end{eqnarray*}
where we use the fact $|\varphi(kp) \cdot p| \leq \delta/2k$ for every $p\in\RR$. Since $\sup_{p\in \RR}|\varphi'(p)|<+\infty$, one gets the convergence in \eqref{foru}.

%The third statement can be directly obtained by computation since \( P_k(x) = \lambda x_2 \) for \( x \notin U_0 \).

\end{proof}

%\begin{lemma}
%
%
%\end{lemma}

\begin{lemma}\label{Cinfinite}
Every $F_k$ is a $C^{\infty}$ diffeomorphism on $\TT^5$.
\end{lemma}

\begin{proof}
For each $k\in \NN$, consider the map $I_k$ defined as follows: it coincides with the identity map $I$ outside $U_0$, and for every $x=(x_1, \cdots , x_5) \in U_0$, 
$$
I_k(x) = \left(x_1, \frac{P_k(x)}{\lambda}, x_3, x_4, x_5 \right). 
$$
As a first step, we show that \( I_k \) is a \( C^\infty \) diffeomorphism.
Since we have 
$$
\frac{\partial }{\partial x_2} P_k(x)\geq \frac{1}{2}>0,\quad \forall~ x\in U_0,
$$ 
by \eqref{peb} in Lemma~\ref{splitting}, which implies that the function $P_k$ is strictly monotonic for the second variable $x_2$ on the interval \( [-\delta_0, \delta_0] \). Furthermore, one can check directly that
$$
\lambda^{-1}\cdot P_k\left(x_1,\pm\frac{2}{3}\delta_0,x_3,x_4,x_5\right)=\pm\frac{2}{3}\delta_0.
$$
We observe that $\lambda^{-1}\cdot P_k$ is the identity map for points outside $[-\frac{2}{3}\delta_0, \frac{2}{3}\delta_0]^5$. As a conseqence, the map $I_k$ is a homeomorphism on $\TT^5$. Then, use the \( C^\infty \) of the truncation \( \varphi \), we know \( I_{k} \) is a \( C^\infty \) diffeomorphism.  Notice that $F_k=F\circ I_k$ (recall $F=B\times K$). Since \( F \) is \( C^\infty \), and \( I_k \) is a \( C^\infty \) diffeomorphism, it follows that \( F_k \) is also a \( C^\infty \) diffeomorphism.
This completes the proof.
\end{proof}

%\begin{remark}\label{leaves}
%Note that $I_k$ maps each leaf in $U_0$ tangent to $\mathcal{E}^u$ onto itself, although it may contract or expand vectors in $\mathcal{E}^u$ at certain points within $U_0$. In fact, the map under consideration is the composition $F_k=F \circ I_k$.
%\end{remark}

\medskip
Denote by $\pi_{\SS^1}$ the natural projection map from $\TT^4\times \SS^1$ onto $\SS^1$. In terms of the splitting ${\cE}^{uu}\oplus {\cE}^{u}\oplus T\mathbb{S}^1 \oplus {\cE}^{s} \oplus {\cE}^{ss}$ on the tangent space $T(\TT^5)$, for every $k\in \NN$, one can check by definition that: for every $x\in \TT^5$, 
\begin{equation}\label{exp1}
DF_k(x)=
\renewcommand{\arraystretch}{1.3}
\begin{pmatrix}
\lambda^2 & 0 & 0 & 0 & 0 \\
\xi_1^k(x) & \xi_2^k(x)  & \xi_3^k(x) & \xi_4^k(x) & \xi_5^k(x) \\
0 & 0 & DK(\pi_{\SS^1}(x)) & 0 & 0 \\
0 & 0 & 0 & \lambda^{-1} & 0 \\
0 & 0 & 0 & 0 & \lambda^{-2}
\end{pmatrix}
,
\end{equation}
where 
$$
\displaystyle
\left\{
\begin{array}{ll}
  \xi_i^k(x) = \dfrac{\partial}{\partial x_i} P_k(x), \quad 1 \le i \le 5, 
    & ~~x \in U_0~; \\[2ex]
  \xi_2^k(x) = \lambda, \quad \xi_i^k(x) = 0, \quad i \neq 2, 
    & ~~x \notin U_0~.
\end{array}
\right.
$$

\subsection{Existence of the partially hyperbolic splitting}

Let $E, F$ be two sub-bundles of the tangent space $T(\TT^5)$,  where the intersection of \( E \) and \( F \) is a trivial bundle. Given $\alpha>0$, we define the \emph{cone field} $\mathscr{C}_{\alpha}(E,F)=\{\mathscr{C}_{\alpha}(E,F,x), x\in \TT^5\}$ by 
$$
\mathscr{C}_{\alpha}(E,F,x)=\left\{v=v_E+v_F\in E(x)\oplus F(x): \|v_F\|\le \alpha \|v_E\|\right\}, \quad x\in \TT^5.
$$
We have the following result.

\begin{lemma}\label{first}
For any $\varepsilon>0$, there exists $k_{\varepsilon}\in \NN$ such that for any $k\ge k_{\varepsilon}$, $F_k$ admits the partially hyperbolic splitting 
$$
T(\mathbb{T}^5)=E^{u}\oplus_{\succ} E_1^c\oplus_{\succ} E_2^c\oplus_{\succ} E^{s}
$$
such that
\begin{enumerate}[(1)]
\item $E^u\subset \mathscr{C}_{\varepsilon}({\cE}^{uu}, {\cE}^u\oplus T\SS^1)$ and is uniformly expanding;
\smallskip
\item $E^s\subset \mathscr{C}_{\varepsilon}({\cE}^{s}\oplus {\cE}^{ss}, {\cE}^{u}\oplus T\SS^1)$ and is uniformly contracting;
\item \label{E1nh}$E_1^c= {\cE}^u$, which satisfies 
$\|DF_k|_{E_1^c(x)}\|=\lambda>1$ if $x\notin U_0$ and equals to $1/2$ at $0\in \TT^5$;
\item $E_2^c\subset \mathscr{C}_{\varepsilon}(T\SS^1, {\cE}^u)$.
\end{enumerate}
\end{lemma}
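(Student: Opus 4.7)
The strategy is to exploit the block structure of $DF_k$ in \eqref{exp1}: since every row with index $i\neq 2$ contains only its diagonal entry, the sub-bundles $\{v_i=0\}$ for $i\in\{1,3,4,5\}$ and all their intersections are $DF_k$-invariant. In particular $\cE^u$, the $2$-dim planes $\cE^{uu}\oplus\cE^u$ and $\cE^u\oplus T\SS^1$, and the $4$-dim bundle $\{v_1=0\}=\cE^u\oplus T\SS^1\oplus\cE^s\oplus\cE^{ss}$ are $DF_k$-invariant. Setting $E_1^c:=\cE^u$, item (\ref{E1nh}) is immediate from \eqref{exp1} and \eqref{peb}: $\|DF_k|_{\cE^u(x)}\|=\xi_2^k(x)$ equals $\lambda$ outside $U_0$ and $\tfrac{1}{2}$ at the origin. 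The remaining three sub-bundles will be produced by cone field arguments inside these invariant bundles, using that the off-diagonal terms $\xi_i^k$ ($i\neq 2$) tend uniformly to $0$ by \eqref{foru}.

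I would first produce $E^u$ inside $\cE^{uu}\oplus\cE^u$, on which $DF_k$ acts by a triangular matrix with diagonal entries $\lambda^2$ and $\xi_2^k\le\lambda^2/3$ and off-diagonal $\xi_1^k\to 0$. The condition $\lambda^2\ge 3\xi_2^k$ from \eqref{peb} makes the standard forward cone argument go through: $\mathscr{C}_\varepsilon(\cE^{uu},\cE^u)$ is $DF_k$-forward invariant once $|\xi_1^k|<\tfrac{2}{3}\varepsilon\lambda^2$, and the nested intersection of forward iterates yields a $1$-dim invariant sub-bundle $E^u$ uniformly expanding at rate $\sim\lambda^2$. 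The inclusion $\mathscr{C}_\varepsilon(\cE^{uu},\cE^u)\subset\mathscr{C}_\varepsilon(\cE^{uu},\cE^u\oplus T\SS^1)$ gives item (1).

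Next I would produce $E_2^c$ inside $\cE^u\oplus T\SS^1$, on which $DF_k$ acts as a triangular matrix with diagonal $\xi_2^k,DK$ and off-diagonal $\xi_3^k\to 0$. The key quantitative fact is a uniform gap $\xi_2^k>DK$ on all of $\TT^5$: inside $U_0$, $\xi_2^k\ge\tfrac{1}{2}>\tfrac{3}{2}\lambda^{-1}>DK$ by (K\ref{Ktwo}) and $\lambda\ge 6$; outside $U_0$, $\xi_2^k=\lambda>\sup_{\SS^1}DK$ by (K\ref{Kone}) and compactness. Thus $T\SS^1$ is the slow direction, so the backward cone $\mathscr{C}_\varepsilon(T\SS^1,\cE^u)$ is $DF_k^{-1}$-forward invariant for $k$ large, giving $E_2^c$. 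The construction of $E^s$ inside $\{v_1=0\}$ is analogous: the $2$-dim center factor $\cE^u\oplus T\SS^1$ expands at rates $\xi_2^k,DK$, both uniformly greater than the stable rates $\lambda^{-1},\lambda^{-2}$ of the $2$-dim stable factor $\cE^s\oplus\cE^{ss}$ (using $DK>\lambda^{-1}$ uniformly from (K\ref{Kone}) and compactness), while the center–stable mixing is controlled by $\xi_4^k,\xi_5^k\to 0$. Hence the backward cone $\mathscr{C}_\varepsilon(\cE^s\oplus\cE^{ss},\cE^u\oplus T\SS^1)$ is $DF_k^{-1}$-forward invariant and produces the uniformly contracting $2$-dim $E^s$.

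The splitting $T\TT^5=E^u\oplus_\succ E_1^c\oplus_\succ E_2^c\oplus_\succ E^s$ is then dominated by the chain of uniform pointwise gaps $\lambda^2\ge 3\xi_2^k$, $\xi_2^k>DK$, $DK>\lambda^{-1}$ assembled along the way, with transversality of the factors immediate from the cone containments. The main technical obstacle is making each cone-invariance and domination estimate hold uniformly in $x\in\TT^5$, i.e., verifying that the off-diagonal perturbations $\xi_i^k$ ($i\neq 2$) are absorbed by the uniform spectral gaps; this reduces in each case to a $2\times 2$ or $4\times 4$ block computation inside one of the three invariant sub-bundles identified above, and is closed by taking $k$ sufficiently large according to \eqref{foru}.
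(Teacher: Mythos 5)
Your proposal is correct and follows essentially the same route as the paper: identify the $DF_k$-invariant sub-bundles $\cE^u$, $\cE^{uu}\oplus\cE^u$, $\cE^u\oplus T\SS^1$ and $\cE^u\oplus T\SS^1\oplus\cE^s\oplus\cE^{ss}$ from the triangular form \eqref{exp1}, establish the pointwise domination chain $\lambda^2>\xi_2^k>\|DK\|>\lambda^{-1}>\lambda^{-2}$ via \eqref{peb}, (H\ref{one}), (K\ref{Kone}) and (K\ref{Ktwo}), take $E_1^c=\cE^u$ directly, and obtain the remaining three bundles by cone invariance once the off-diagonal terms $\xi_i^k$ ($i\neq 2$) are made small by \eqref{foru} for $k\ge k_\varepsilon$. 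The paper's proof is the same argument, stated slightly more tersely.
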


\begin{proof}
For every $k\ge 1$, the expression of $DF_k$ in \eqref{exp1} implies that the following sub-bundles
$$
\cE^u,\quad \cE^{uu}\oplus \cE^u,\quad \cE^u \oplus T\SS^1\oplus \cE^s\oplus \cE^{ss},\quad \cE^u\oplus T\SS^1,
$$
are $DF_k$-invariant. Moreover, we can show the absolute values of diagonal entries of $DF_k$ are dominated one by one as follows:
\begin{equation}\label{dm}
\lambda^2>\xi_2^k(x)>\|DK(\pi_{\SS^1}(x))\|>\lambda^{-1}>\lambda^{-2},\quad \forall ~x\in \TT^5.
\end{equation}
Indeed, if $x\notin U_0$, then $\xi_2^k(x)=\lambda$, \eqref{dm} follows from (K\ref{Kone}); in the case that $x\in U_0$, using \eqref{peb} in Lemma \ref{splitting}, (H\ref{one}) and (K\ref{Ktwo}), one deduces that 
$$
\lambda^2>\xi_2^k(x)=\frac{\partial }{\partial x_2}P_k(x)>\| DK(\pi_{\SS^1}(x)) \| >\lambda^{-1},
$$ 
which yields \eqref{dm} as well.
%the choices of $\lambda$  and $K$
%
%by the choices of $\lambda$ and $K$ (recall (H\ref{one}) and (K\ref{Ktwo})), and taking into account \eqref{peb} from Lemma~\ref{splitting},  we have the domination property as follows: 
%\begin{equation}\label{domination1}
%\lambda^{-1} < \| DK(x_3) \| < \frac{\partial }{\partial x_2}P_k(x) < \lambda^2.
%\end{equation}
% Note that $DF_k(x)$ has the expression
%\begin{equation}\label{exp1}
%\renewcommand{\arraystretch}{1.3}
%\begin{pmatrix}
%\lambda^2 & 0 & 0 & 0 & 0 \\
%\frac{\partial }{\partial x_1}P_k(x) & \frac{\partial }{\partial x_2}P_k(x) & \frac{\partial }{\partial x_3}P_k(x) & \frac{\partial }{\partial x_4}P_k(x) & \frac{\partial }{\partial x_5}P_k(x) \\
%0 & 0 & DK(x_3) & 0 & 0 \\
%0 & 0 & 0 & \lambda^{-1} & 0 \\
%0 & 0 & 0 & 0 & \lambda^{-2}
%\end{pmatrix}
%,
%\end{equation}
%which suggests that the following sub-bundles are invariant under $DF_k$:
%$$
%\cE^u,\quad \cE^{uu}\oplus \cE^u\oplus T\SS^1,\quad \cE^u \oplus T\SS^1\oplus \cE^s\oplus \cE^{ss},\quad \cE^u\oplus T\SS^1.
%$$
%For every $\varepsilon>0$, write for simplicity that
%$$
%\mathscr{C}_{\varepsilon}^u(x)=\mathscr{C}_{\varepsilon}(E_F^{uu},E_F^u\oplus T\SS^1\oplus E_F^s\oplus E_F^{ss}),
%$$
%$$
%\mathscr{C}_{\varepsilon}^u(x)=\mathscr{C}_{\varepsilon}(E_F^{s}\oplus E_F^{ss}, E_F^{uu}\oplus E_F^u \oplus T\SS^1 \oplus E_F^{s})
%$$
%$$
%\mathscr{C}_{\varepsilon}^{cs}(x)=\mathscr{C}_{\varepsilon}(T\SS^1,E^u_F).
%$$
By \eqref{dm}, \eqref{foru} in Lemma \ref{splitting} and take expression \eqref{exp1} into account, one deduces that for every $\varepsilon>0$, there exist $\kappa\in (0,1)$ and $k_{\varepsilon}\in \NN$ such that for every $k\ge k_{\varepsilon}$, for every $x\in \TT^5$, it holds that
$$
DF_k(x)\left(\mathscr{C}_{\varepsilon}({\cE}^{uu}, {\cE}^u,x)\right)\subset \mathscr{C}_{\varepsilon\kappa}({\cE}^{uu}, {\cE}^u,F_k(x)),
$$
$$
DF_k^{-1}(x)\mathscr{C}_{\varepsilon}({\cE}^{s}\oplus {\cE}^{ss}, {\cE}^{u}\oplus T\SS^1,x)\subset \mathscr{C}_{\varepsilon\kappa}\left({\cE}^{s}\oplus {\cE}^{ss}, {\cE}^{u}\oplus T\SS^1, F_k^{-1}(x)\right),
$$
$$
DF_k^{-1}(x)\mathscr{C}_{\varepsilon}(T\SS^1, {\cE}^u,x)\subset \mathscr{C}_{\varepsilon\kappa}\left(T\SS^1, {\cE}^u,F_k^{-1}(x)\right).
$$

Now we fix any large $k\ge k_{\varepsilon}$. Let us take $E_1^c:=\cE^u$ directly. Using the cones argument, one can find sub-bundles $E^u\subset \mathscr{C}_{\varepsilon}({\cE}^{uu}, {\cE}^u)$, $E^s\subset \mathscr{C}_{\varepsilon}({\cE}^{s}\oplus {\cE}^{ss}, {\cE}^{u}\oplus T\SS^1)$ and $E_2^c\subset \mathscr{C}_{\varepsilon}(T\SS^1, {\cE}^u)$, they are $DF_k$-invariant. Among these, \( E^u \) is uniformly expanding, and \( E^s \) is uniformly contracting.
By the expression \eqref{exp1} of $DF_k$ and the fact $E_1^c=\cE^u$, we know
$$
\|DF_k|_{E_1^c(x)}\|=\xi_2^k(x), \quad \forall~x\in \TT^5.
$$
%By consulting the derivatives of $P_k$, we get 
%$$
%\|DF_k|_{E_1^c(0)}\|=\frac{1}{2}.
%$$
%By \eqref{peb} and \eqref{foru} in Lemma \ref{splitting}, it follows that
%\begin{equation}\label{ul}
%\|DF_k|_{E_1^c(x)}\|\ge \frac{1}{2}, \quad x\in U_0,
%\end{equation}
By definition of $\xi_2^k$ and using \eqref{peb} in Lemma~\ref{splitting}, we obtain 
$$
\|DF_k|_{E_1^c(x)}\|=\lambda>1,\quad \forall~x\notin U_0, 
$$
and 
$$
\|DF_k|_{E_1^c(0)}\|=\frac{\partial}{\partial x_2}P_k(0)=\frac{1}{2}.
$$
% yields
%$$
%\|DF_k|_{E_1^c(0)}\|=\frac{1}{2}.
%$$
%On the other hand, observe that $F_k$ agrees with $F$ outside $U_0$, we get
%$$
%\|DF_k|_{E_1^c(x)}\|=\|DF|_{\mathcal{E}^u(x)}\|=\lambda>1, \quad \forall~ x\notin U_0.
%$$
%This shows .
 %$$
%Df_k(0)= \begin{pmatrix}
%\alpha_1 & 0 & 0 & 0 & 0 \\
%0 & 1 & 0 & 0 & 0 \\
%0 & 0 & D K(0) & 0 & 0 \\
%0 & 0 & 0 & \alpha_3 & 0 \\
%0 & 0 & 0 & 0 & \alpha_4
%\end{pmatrix}
%,
%$$
%which together with $E=E_2$ imply that $\| Df_k|_{E(0)} \|=1$.
To be summarized, $F_k$ admits a partially hyperbolic splitting
$$
T(\mathbb{T}^5)=E^{u}\oplus_{\succ} E_1^c\oplus_{\succ} E_2^c\oplus_{\succ} E^{s}
$$
with the desired properties.
\end{proof}

\subsection{Proof of Theorem \ref{mainone}}

Throughout the remainder of this section, we fix \( f_0 = F_k \) for some \( k \geq k_1 \) that satisfies Lemma \ref{first}.
We will obtain Theorem \ref{mainone} by showing the following more detailed result.

\begin{theorem}\label{gtop}
There exists a $C^1$ open neighborhood $\U$ of $f_0$ such that for every $f\in \U$, there exists a partially hyperbolic splitting
$$
T(\mathbb{T}^5)=E^{u}\oplus_{\succ} E_{1}^c\oplus_{\succ} E_{2}^c\oplus_{\succ} E^{s}
$$
such that
\begin{enumerate}[(1)]
\item \label{nonun} neither $E_{1}^c$ nor $E_{2}^c$ is uniformly contracting/expanding;
\smallskip
\item \label{dimension} ${\rm dim}E^u={\rm dim}E_{1}^c={\rm dim}E_{2}^c=1$, ${\rm dim}E^s=2$;
\smallskip
\item \label{hyperbolic} any Gibbs $u$-state of $f$ is hyperbolic;
\smallskip
\item \label{empirical} for Lebesgue almost every $x\in \mathbb{T}^5$, any measure of $\mathscr{M}(x,f)$ admits only positive Lyapunov exponents along $E_{1}^c$ and negative Lyapunov exponents along $E_{2}^c$;
\smallskip
\item \label{uniqueness} any $C^{1+}$ diffeomorphism in $\U$ admits a unique physical measure whose Lyapunov exponents along $E_1^c$ are all positive and along $E_2^c$ are all negative, and its basin covers a full Lebesgue measure subset of $\TT^5$.
\end{enumerate}
\end{theorem}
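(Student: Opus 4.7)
Since dominated splittings are $C^1$-open, Lemma \ref{first} yields the splitting $E^u\oplus E_1^c\oplus E_2^c\oplus E^s$ with the prescribed dimensions on some $C^1$-neighborhood $\U$ of $f_0=F_k$. Non-uniformity of $E_1^c$ at $f_0$ is immediate from Lemma \ref{first}\eqref{E1nh}: one has $\|Df_0|_{E_1^c}\|=1/2$ at the fixed point $0$ but $=\lambda>1$ off $U_0$. For $E_2^c$, lying in a narrow cone around $T\SS^1$, its derivative is close to $DK$, and conditions (K\ref{Ktwo})--(K3) force $K$ to contract near its sink and expand near its source. All these strict inequalities are continuous in $f$ and therefore persist on $\U$.

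\textbf{Part (3): hyperbolicity of Gibbs $u$-states.} The heart of the argument is to show $\chi(\mu, E_1^c)>0$ and $\chi(\mu,E_2^c)\neq 0$ for every $\mu\in \G^u(f)$. I first treat $f=f_0$. Because the perturbation modifies only the $x_2$-coordinate, the projection $\pi:\TT^5\to\TT^2$ onto the first toral factor is an exact semiconjugacy, $\pi\circ F_k=A^2\circ\pi$, and the explicit form \eqref{exp1} shows that $\cE^{uu}$ remains $DF_k$-invariant, hence $E^u=\cE^{uu}$. Consequently $\pi$ maps strong unstable leaves of $F_k$ isometrically onto unstable leaves of $A^2$, pushing absolutely continuous conditionals to absolutely continuous conditionals; therefore $\pi_*\mu$ is a Gibbs $u$-state of the linear Anosov $A^2$ and so equals $\Leb_{\TT^2}$. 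In particular $\mu(U_0)\leq \Leb_{\TT^2}([-\delta_0,\delta_0]^2)$. Because $E_1^c=\cE^u$ exactly and $\|Df_0|_{E_1^c}\|=\xi_2^k$ equals $\lambda$ off $U_0$ and is $\geq 1/2$ inside $U_0$ by Lemma \ref{splitting},
$$
\chi(\mu, E_1^c)\ \geq\ \log\lambda - \mu(U_0)\log(2\lambda)\ \geq\ \log\lambda\Bigl(1-\tfrac{\log(2\lambda)}{\log(3\lambda)}\Bigr)>0
$$
using (H\ref{three}) and $\log(2\lambda)<\log(3\lambda)$. For $E_2^c$, pass to an ergodic component $\mu_0$ of $\mu$ (Proposition \ref{basicu}\eqref{cpu}); the pushforward $(\pi_{\SS^1})_*\mu_0$ is an ergodic $K$-invariant measure and so equals $\delta_0$ or $\delta_{1/2}$, giving $\chi(\mu_0,E_2^c)$ close to $\log\|DK(0)\|<0$ or $\log\|DK(1/2)\|>0$, in either case nonzero. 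The extension to $f\in\U$ is a soft argument: the map $(f,\mu)\mapsto \int\log\|Df|_{E_j^c}\|\,d\mu$ is jointly continuous and $\G^u(\cdot)$ is upper semi-continuous in $f$ by Proposition \ref{basicu}\eqref{upp}, so any sequence $f_n\to f_0$ with $\mu_n\in \G^u(f_n)$ violating the bounds would yield, after extraction, a Gibbs $u$-state of $f_0$ violating them as well.

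\textbf{Parts (4) and (5).} By Proposition \ref{basicu}\eqref{ufull}, for Lebesgue a.e.\ $x\in \TT^5$ every $\nu\in \mathscr{M}(x,f)$ lies in $\G^u(f)$, and hence has $\chi(\nu,E_1^c)>0$ by (3). For the sign on $E_2^c$, observe that $\TT^4\times\{1/2\}$ is a normally expanding invariant submanifold of $f_0$ (its unique normal exponent is $\log\|DK(1/2)\|>0$); by normal hyperbolicity it persists under $C^1$ perturbation as an invariant $4$-submanifold whose strong-stable set is a smooth $4$-dimensional submanifold of $\TT^5$, hence of Lebesgue measure zero. Thus Lebesgue-a.e.\ $x$ eventually leaves every neighborhood of this continuation and its forward orbit accumulates on the continuation $\Lambda$ of $\TT^4\times\{0\}$, where $E_2^c$ is uniformly contracting; this forces $\chi(\nu, E_2^c)<0$ for every $\nu\in\mathscr{M}(x,f)$ and proves (4). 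For (5), the hyperbolicity from (3) combined with the main theorem of \cite{MC25} produces finitely many ergodic physical measures $\mu_1,\dots,\mu_N$ whose basins cover $\TT^5$ up to a null set. Each $\mu_i$ inherits the exponent signs from (4), and is thus supported on $\Lambda$. On $\Lambda$, $f$ restricts to a DA-type perturbation of the Anosov $B$ on $\TT^4$ with mostly expanding center $E_1^c$; its iterated unstable foliation, tangent to $E^u\oplus E_1^c$, remains close to the (minimal) full unstable foliation of $B$ on $\TT^4$, and a standard Hopf/absolute continuity argument then identifies a unique ergodic SRB, forcing $N=1$.

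\textbf{Main obstacle.} The technical core is the quantitative estimate $\chi(\mu, E_1^c)>0$: it crucially relies on the $x_2$-localisation of the perturbation (which preserves the exact semiconjugacy $\pi\circ F_k=A^2\circ\pi$), on the lower bound $\xi_2^k\geq 1/2$ from Lemma \ref{splitting}, and on the tight budget (H\ref{three}) that makes $\Leb_{\TT^2}([-\delta_0,\delta_0]^2)<\log\lambda/\log(3\lambda)$. The uniqueness step in (5) is also delicate, as it requires identifying the DA-attractor $\Lambda$ across the entire neighborhood $\U$ and, via the near-minimality of the iterated unstable foliation on $\Lambda$, ruling out competing ergodic components.
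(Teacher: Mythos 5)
Your treatment of parts (1)--(2) and of the positive exponent along $E_1^c$ is essentially the paper's argument (the semiconjugacy $\pi\circ f_0=A^2\circ\pi$, the identification $\pi_*\mu=\Leb_{\TT^2}$, the budget $\mu(U_0)<\log\lambda/\log(3\lambda)$, and the compactness extension to nearby $f$, which works there because the bound is one-sided and uniform over $\G^u$). However, there are two genuine gaps. First, in part (3) your extension of the $E_2^c$-hyperbolicity to $f$ near $f_0$ by weak-$*$ compactness fails: hyperbolicity of a Gibbs $u$-state means every ergodic component has nonzero exponent, and this is not detected by the functional $\mu\mapsto\int\log\|Df|_{E_2^c}\|\,d\mu$. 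A sequence of ergodic $\mu_n\in\G^u(f_n)$ with $\chi(\mu_n,E_2^c)\to 0$ converges to a (generally non-ergodic) $\mu\in\G^u(f_0)$ whose integral vanishes, which is perfectly compatible with $\mu$ being a hyperbolic mixture of a component on $\TT^4\times\{a_1\}$ (negative exponent) and one on $\TT^4\times\{a_2\}$ (positive exponent); no contradiction results. The paper closes this by a robust geometric mechanism: for every $f$ in a neighborhood there is an attractor $\mathcal{A}_f$ and a repeller $\mathcal{R}_f$ carrying all invariant measures, on which $\|Df|_{E_2^c}\|\le\theta<1$ and $\|Df^{-1}|_{E_2^c}\|\le\theta$ respectively, yielding the pointwise bound $|\chi(x,E_2^c)|\ge-\log\theta$ a.e.\ for every invariant measure. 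You in fact invoke exactly this attractor/repeller persistence in part (4); it must also carry part (3). (Note also that your appeal to normal hyperbolicity of $\TT^4\times\{1/2\}$ to get a smooth persistent submanifold is not available: the normal rate $\approx\|DK(a_2)\|$ does not dominate the tangential rates $\lambda^{\pm 2}$, so the continuations $\mathcal{A}_f$, $\mathcal{R}_f$ are only compact invariant sets; the paper gets $\Leb(\mathcal{A}_f)=\Leb(\mathcal{R}_f)=0$ from the Jacobian bounds $|\det Df^{\pm1}|<\theta$ on the trapping regions.)

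Second, the uniqueness step in part (5) is not a proof. Since $\mathcal{A}_f$ need not be a submanifold (see above), "$f$ restricts to a DA-type perturbation of the Anosov $B$ on $\TT^4$" is not meaningful for general $f\in\U$, and the claimed near-minimality of the foliation tangent to $E^u\oplus E_1^c$ is a substantial unproved assertion (this bundle is not uniformly expanded near $0$, and even its integrability is not given). The paper's route is different and should be followed: (i) every physical measure $\mu$ satisfies $\chi(\mu,E_1^c)>\beta$, so by a Pliss-type selection it charges a set $\Lambda_\beta(f)$ of points with genuine local unstable manifolds of uniform size $r_\beta$ tangent to $E^u\oplus E_1^c$, and absolute continuity of the SRB conditionals produces a uniform-size unstable disk $D_\rho(\mu)\subset\mathcal{A}_f$ with Lebesgue-a.e.\ point in $B(\mu,f)$; (ii) the hyperbolic fixed point $\mathbf{p}_f=(p,p,a_1)_f$ has stable manifold dense in $\TT^4\times[-\eta,\eta]$, uniformly transverse to all such disks; (iii) the inclination lemma plus absolute continuity of the strong stable foliation then forces the basins of any two physical measures to intersect in positive Lebesgue measure, a contradiction. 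Without (i)--(iii), or an equivalent mechanism, the reduction from "finitely many" (which you correctly get from \cite{MC25}) to "exactly one" is unsupported.
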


%
%\begin{theorem}\label{uniqueness}
%There exists a $C^1$ open neighborhood $\U$ of $f_0$ such that any $C^{1+}$ diffeomorphism $f\in \U$ admits a unique physical measure $\mu$ satisfying 
%$$
%\chi(\mu, E_1^c)>0>\chi(\mu, E_2^c),
%$$
%whose basin covers a full Lebesgue measure of $\TT^5$.
%\end{theorem}

\subsubsection{Dynamics of diffeomorphisms nearby $f_0$}

\begin{lemma}\label{orcont}
There exists $\theta\in (0,1)$ such that 
%\begin{enumerate}
%\item[--] $\|Df_0^{-1}|_{E_2^c(x)}\|=\|Df_0^{-1}|_{T_x\SS^1}\|<\theta,\quad |{\rm det} Df_0^{-1}(x)|<\theta, \quad \forall~x\in \TT^4\times \{a_2\}.$
%\item[--] $\|Df_0|_{E_2^c(x)}\|=\|Df_0|_{T_x\SS^1}\|<\theta, \quad |{\rm det} Df_0(x)|<\theta,\quad \forall~x\in \TT^4\times \{a_1\}.$
%\end{enumerate}
$$
\|Df_0^{-1}|_{E_2^c(x)}\|=\|Df_0^{-1}|_{T_x\SS^1}\|<\theta,\quad |{\rm det} Df_0^{-1}(x)|<\theta, \quad \forall~x\in \TT^4\times \{a_2\}.
$$
$$
\|Df_0|_{E_2^c(x)}\|=\|Df_0|_{T_x\SS^1}\|<\theta, \quad |{\rm det} Df_0(x)|<\theta,\quad \forall~x\in \TT^4\times \{a_1\}.
$$

\end{lemma}

\begin{proof}
For points in $\TT^4\times \{a_2\}$, as in this case $f_0=B\times K$, by definition one can find $\alpha\in (0,1)$ to satisfy the associated inequalities.
We need only to show the inequalities for points of $\TT^4\times \{a_1\}$.

Noting first that 
\begin{equation}\label{sinv}
\xi_3^k(x)=0,\quad \forall~x\in \TT^4\times \{a_1\}. 
\end{equation}
Indeed, we get $\xi_3^k(x)=0$ when $x\notin U_0$ by definition. When $x\in U_0$, then it has the expression $x=(x_1,\cdots,x_5)$ with $x_3=0$. By \eqref{foru} in Lemma \ref{splitting}, we have also
$$
\xi_3^k(x)=\frac{\partial }{\partial x_3}P_k(x)=0.
$$
Altogether, we obtain \eqref{sinv}. 

From \eqref{sinv} and the expression of $Df_0=DF_k$ provided in \eqref{exp1}, we see that $T\SS^1$is invariant under $Df_0$ restricting on the fiber $\TT^4\times \{a_1\}$. Moreover, by the domination between $\cE^u$ and $T\SS^1$ and the fact $E_2^c\subset \mathscr{C}_{1}(T\SS^1, {\cE}^u)$, it follows that $E_2^c$ coincides with $T\SS^1$ on $\TT^4\times \{a_1\}$. Thus, for every $x\in \TT^4\times \{a_1\}$,
$$
\|Df_0|_{E_2^c(x)}\|=\|Df_0|_{T_x\SS^1}\|=\|DK(a_1)\|<\frac{3}{2}\lambda^{-1}<1/4.
$$
Furthermore, by definition of $\xi_2^k(x)$ and \eqref{peb} in Lemma \ref{splitting}, it follows that
$$
\|Df_0|_{E_1^c(x)}\|=\xi_2^k(x)\le \max\{\lambda,\lambda^2/3\}=\lambda^2/3,
$$
where we use the fact $\lambda>6$ in (H\ref{one}). Thus, recalling (H\ref{Ktwo}), we have
\begin{eqnarray*}
|{\rm det}Df_0(x)|&\le &\lambda^2\cdot \|Df_0|_{E_1^c(x)}\|\cdot \|DK(a_1)\|\cdot \lambda^{-1}\cdot \lambda^{-2}\\
&\le & \frac{\lambda^2}{3}\cdot \lambda^{-1}\cdot \frac{3}{2}\lambda^{-1}\\
&=& \frac{1}{2}.
\end{eqnarray*}
To be summarized, we obtain the desired result by taking $\theta=\max\{1/2,\alpha\}$. 
\end{proof}

By the $C^1$-robustness of partial hyperbolicity, we can extend estimates in Lemma \ref{orcont} to diffeomorphisms nearby $f$. More precisely, we have 

%\begin{proposition}\label{topar}
%There exists a $C^1$ open neighborhood $\U_1$ of $f$ and constants $\theta\in(0,1)$, $\eta>0$ such that any $g\in \U_1$ exhibits the partially hyperbolic splitting
%$$
%T(\mathbb{T}^5)=E^{u}(g)\oplus_{\succ} E_1^c(g)\oplus_{\succ} E_2^c(g)\oplus_{\succ} E^{s}(g)
%$$
%for which 
%%each sub-bundles depends continuously on $g\in \U_1$ in $C^1$-topology. Moreover, it holds that
%%\begin{itemize}
%%\item $E^u_g$ and $E^s_g$ is uniformly expanding and uniformly contracting;
%%\smallskip
%%\item neither $E_g$ nor $F_g$ is uniformly contracting/expanding;
%%\smallskip
%%\smallskip
%%\item ${\rm dim}E_g^u={\rm dim}E_g={\rm dim}F_g=1$, ${\rm dim}E_g^s=2$.
%%\smallskip
%%\item 
%%\begin{equation}\label{exttonei}
%%\max_{x\in  \TT^4\times [-\eta, \eta]}\left\{\|Dg|_{T_x\SS^1}\|,\|Dg|_{F_g(x)}\|, |{\rm det} Dg(x)|\right\}<\theta.
%%\end{equation}
%%\item
%%\begin{equation}\label{exctonei}
%%\max_{x\in  \TT^4\times [-\eta, \eta]}\left\{\|Dg^{-1}|_{T_x\SS^1}\|,\|Dg^{-1}|_{F_g(x)}\|, |{\rm det} Dg^{-1}(x)|\right\}<\theta.
%%\end{equation}
%%\end{itemize}
%\begin{equation}\label{exttonei}
%\max_{x\in  \TT^4\times [-\eta, \eta]}\left\{\|Dg|_{T_x\SS^1}\|,\|Dg|_{F_g(x)}\|, |{\rm det} Dg(x)|\right\}<\theta.
%\end{equation}
%\begin{equation}\label{exctonei}
%\max_{x\in  \TT^4\times [1/2-\eta, 1/2+\eta]}\left\{\|Dg^{-1}|_{T_x\SS^1}\|,\|Dg^{-1}|_{F_g(x)}\|, |{\rm det} Dg^{-1}(x)|\right\}<\theta.
%\end{equation}
%\end{proposition}

\begin{proposition}\label{topar}
There exists a $C^1$ open neighborhood $\U_1$ of $f$ and constants $\theta\in(0,1)$, $0<\eta\ll 1/4$ such that any $f\in \U_1$ exhibits the partially hyperbolic splitting
$$
T(\mathbb{T}^5)=E^{u}_f\oplus_{\succ} E_{1,f}^c\oplus_{\succ} E_{2,f}^c\oplus_{\succ} E^{s}_f,
$$
for which
\begin{equation}\label{exttonei}
\max_{x\in  \TT^4\times [a_1-\eta, a_1+\eta]}\left\{\|Df|_{T_x\SS^1}\|,\|Df|_{E^c_{2,f}(x)}\|, |{\rm det} Df(x)|\right\}<\theta,
\end{equation}
\begin{equation}\label{exctonei}
\max_{x\in  \TT^4\times [a_2-\eta, a_2+\eta]}\left\{\|Df^{-1}|_{T_x\SS^1}\|,\|Df^{-1}|_{E_{2,f}^c(x)}\|, |{\rm det} Df^{-1}(x)|\right\}<\theta.
\end{equation}
\end{proposition}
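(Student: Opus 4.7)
The plan is to combine the $C^1$-robustness of the partially hyperbolic splitting produced in Lemma \ref{first} with the strict inequalities of Lemma \ref{orcont}, and then to promote those estimates from the invariant fibers $\TT^4 \times \{a_i\}$ to small tubular neighborhoods $\TT^4 \times [a_i - \eta, a_i + \eta]$ by a continuity-and-compactness argument.

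First, since $f_0 = F_k$ admits the partially hyperbolic splitting supplied by Lemma \ref{first}, the classical robustness of dominated splittings (in the style of Hirsch--Pugh--Shub) gives a $C^1$-open neighborhood $\U_0$ of $f_0$ on which every $f \in \U_0$ carries a splitting
$$
T\TT^5 = E_f^u \oplus_{\succ} E_{1,f}^c \oplus_{\succ} E_{2,f}^c \oplus_{\succ} E_f^s
$$
of dimensions $(1,1,1,2)$, with each sub-bundle depending continuously on $(f, x) \in \U_0 \times \TT^5$ in the natural topology. It follows that the three scalar functions
$(f,x) \mapsto \|Df|_{T_x\SS^1}\|$,
$(f,x) \mapsto \|Df|_{E_{2,f}^c(x)}\|$, and
$(f,x) \mapsto |\det Df(x)|$
are continuous on $\U_0 \times \TT^5$; the only nontrivial case is the middle one, which uses the continuity of $E_{2,f}^c$ in $(f,x)$.

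Next, Lemma \ref{orcont} provides some $\theta_0 \in (0,1)$ such that all three quantities, evaluated at $f = f_0$, are bounded above by $\theta_0$ on the compact fiber $\TT^4 \times \{a_1\}$, where one uses the fact (extracted from the proof of Lemma \ref{orcont}) that $E_{2,f_0}^c = T\SS^1$ along this fiber so that the first two quantities agree. An analogous bound holds for $f_0^{-1}$ on $\TT^4 \times \{a_2\}$. Fix any $\theta \in (\theta_0, 1)$. By compactness of the two fibers inside $\TT^5$ and the joint continuity above, applied simultaneously to the six scalar functions coming from $f$ and from $f^{-1}$, one extracts $\eta \in (0, 1/100)$ and a $C^1$-neighborhood $\U_1 \subset \U_0$ of $f_0$ for which both \eqref{exttonei} and \eqref{exctonei} are satisfied for every $f \in \U_1$. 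The only subtle point is the continuity of the one-dimensional center sub-bundle $E_{2,f}^c$ as a function of $(f,x)$, which is the standard robustness of dominated invariant sub-bundles; once this is granted, the remainder is a routine extraction of neighborhoods from strict inequalities on compact sets.
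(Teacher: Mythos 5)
Your argument is correct and coincides with the paper's: the paper does not write out a proof of Proposition \ref{topar} at all, merely asserting that it follows from the $C^1$-robustness of partial hyperbolicity together with Lemma \ref{orcont}, and your proposal supplies exactly the standard details (continuity of the dominated sub-bundles in $(f,x)$, strict inequalities on the compact invariant fibers, and extraction of $\eta$ and $\U_1$ by compactness). No gaps.
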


We emphasize that for general $f\in \U_1$, $E_{2,f}^c$ and $T\SS^1$ do not necessary agree, even in $\TT^4\times \{a_1, a_2\}$. The sub-bundles in the splitting converge to the associated sub-bundles of $f_0$,
%Note that $E^u_f\to E^u$, $E^c_{1,f}\to E_1^c$, $E_{2,f}^c\to E_{2,f}^c$ and $E^s_f\to E^s$, 
as $f$ converges to $f_0$ in $C^1$-topology. 
For the rest of this paper, we will drop the subscript $f$ in these sub-bundles if there is no ambiguity.

%One can check by the construction of $f$ that every point outside $\TT^4\times \{a_1,a_2\}$ will approach forwardly to $\TT^4\times \{a_1\}$ and approach backwardly to $\TT^4\times \{a_1\}$. For diffeomorphim nearby $f$, one has the following fact.

\begin{proposition}\label{invariant}
There exists a $C^1$ open neighborhood $\U_2\subset \U_1$ of $f_0$ such that for every $f\in \U_2$, there exist an attractor $\mathcal{A}_f$ and a repeller $\mathcal{R}_f$ such that 
\begin{enumerate}[(1)]
\item \label{zerol} ${\rm Leb}(\mathcal{A}_f)={\rm Leb}(\mathcal{R}_f)=0$;
\smallskip
\item \label{orbit} if $x\notin \mathcal{A}_f\cup \mathcal{R}_f$, then
$f^n(x)\to \mathcal{A}_f$ and $f^{-n}(x)\to \mathcal{R}_f$ as $n\to +\infty$;
\smallskip
\item\label{nu2}  $\|Df|_{E_{2}^c(x)}\|\le\theta$ for every $x\in \mathcal{A}_f$, $\|Df^{-1}|_{E_{2}^c(x)}\|\le \theta$ for every $x\in \mathcal{R}_f$.
\end{enumerate}
\end{proposition}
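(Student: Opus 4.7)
I will realize $\cA_f$ and $\cR_f$ as the maximal forward- and backward-invariant sets inside small thickenings of the surfaces $\TT^4\times\{a_1\}$ and $\TT^4\times\{a_2\}$, exploiting the fact that the unperturbed map projects to the Morse--Smale map $K$ on $\SS^1$. A direct inspection of the formula defining $F_k$ shows $\pi_{\SS^1}\circ f_0=K\circ\pi_{\SS^1}$, so for $\eta_1\in(0,\eta)$ small enough
\[
K\bigl([a_1-\eta_1,a_1+\eta_1]\bigr)\subset\Int[a_1-\eta_1,a_1+\eta_1],\quad K^{-1}\bigl([a_2-\eta_1,a_2+\eta_1]\bigr)\subset\Int[a_2-\eta_1,a_2+\eta_1],
\]
which lift to $f_0(\overline{V_1})\subset\Int V_1$ and $f_0^{-1}(\overline{V_2})\subset\Int V_2$ for $V_i=\TT^4\times[a_i-\eta_1,a_i+\eta_1]$. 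Both inclusions are $C^0$-open in $f$, hence persist on a smaller neighbourhood $\U_2\subset\U_1$, and I set
\[
\cA_f=\bigcap_{n\ge 0}f^n(\overline{V_1}),\qquad \cR_f=\bigcap_{n\ge 0}f^{-n}(\overline{V_2}).
\]

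For item (\ref{zerol}), since trapping keeps any forward orbit starting in $\overline{V_1}$ inside $\overline{V_1}\subset\TT^4\times[a_1-\eta,a_1+\eta]$, the bound $|{\rm det}\,Df|<\theta$ from (\ref{exttonei}) implies $|{\rm det}\,Df^n(x)|\le\theta^n$ for every $x\in\overline{V_1}$. The change-of-variables formula then yields
\[
\Leb\bigl(f^n(\overline{V_1})\bigr)=\int_{\overline{V_1}}|{\rm det}\,Df^n(x)|\,\ud x\le\theta^n\Leb(\overline{V_1}),
\]
and since the nested sequence $f^n(\overline{V_1})$ decreases to $\cA_f$, this forces $\Leb(\cA_f)=0$; the argument for $\cR_f$ is symmetric via $|{\rm det}\,Df^{-1}|<\theta$ on $\overline{V_2}$. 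Item (\ref{nu2}) falls out of the same observation: the forward (resp.\ backward) orbit of every point of $\cA_f$ (resp.\ $\cR_f$) lies in $\overline{V_1}$ (resp.\ $\overline{V_2}$), where the central estimates in (\ref{exttonei})--(\ref{exctonei}) directly apply.

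The core of the argument is item (\ref{orbit}). The Morse--Smale structure on $\SS^1$ furnishes $N\in\NN$ with
\[
K^N\bigl(\overline{\SS^1\setminus(a_2-\eta_1,a_2+\eta_1)}\bigr)\subset\Int[a_1-\eta_1,a_1+\eta_1],
\]
and lifting through $\pi_{\SS^1}\circ f_0^N=K^N\circ\pi_{\SS^1}$ gives the transient inclusion $f_0^N(\overline{\TT^5\setminus V_2})\subset V_1$. Compactness of $\overline{\TT^5\setminus V_2}$ together with the continuity of $f\mapsto f^N$ in the $C^0$-topology ensure that this inclusion remains valid on a possibly smaller $\U_2$. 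If $x\notin\cR_f$, then by definition some iterate $f^m(x)$ lies outside $\overline{V_2}$, after which the transient inclusion yields $f^{m+N}(x)\in V_1$ and the trapping property keeps all further iterates in $\overline{V_1}$; since the nested sequence of compact sets $f^j(\overline{V_1})$ shrinks in Hausdorff distance to $\cA_f$, one concludes $\mathrm{dist}(f^k(x),\cA_f)\to 0$. The statement $f^{-k}(x)\to\cR_f$ for $x\notin\cA_f$ is entirely dual. The main technical subtlety is ensuring that the three open conditions---trapping of $V_1$, backward trapping of $V_2$, and the $N$-step transient inclusion---hold simultaneously on a common $C^1$-neighbourhood of $f_0$, but this follows routinely from the strict inclusions $\subset\Int$ combined with the compactness of the sets involved.
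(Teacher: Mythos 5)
Your proof is correct, and it shares the paper's overall skeleton: $\mathcal{A}_f$ and $\mathcal{R}_f$ are realized as $\bigcap_{n}f^{n}$ (resp. $\bigcap_{n}f^{-n}$) of a trapping slab around $\TT^4\times\{a_1\}$ (resp. $\TT^4\times\{a_2\}$), and items \eqref{zerol} and \eqref{nu2} follow from the determinant and central bounds of Proposition \ref{topar} exactly as you argue. Where you genuinely diverge is in how the two dynamical ingredients are obtained. The paper establishes the trapping $f(\TT^4\times[-\eta,\eta])\subset \TT^4\times(-\eta,\eta)$ quantitatively, combining the fiber contraction $\|Df|_{T_x\SS^1}\|<\theta$ from \eqref{exttonei} with the explicit $C^0$ bound $d(f,f_0)<\frac{1-\theta}{2}\eta$; you instead note that $f_0$ already strictly traps a slightly smaller closed slab because of the exact semiconjugacy $\pi_{\SS^1}\circ f_0=K\circ\pi_{\SS^1}$, and then invoke only the $C^0$-openness of a compact-inside-open inclusion. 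Likewise, for the passage through the middle region the paper's Claim \ref{eno} proves a per-iterate drift $\pi_{\SS^1}(x)-\pi_{\SS^1}(f(x))\ge \zeta/2$ by comparing $f$ with $f_0$ pointwise, whereas you replace this with a single uniform transit time $N$ for $K$, lifted to $f_0^{N}$ and persisted by $C^0$-openness of $f\mapsto f^{N}$. Your route buys a shorter, estimate-free argument (and it in fact gives the slightly sharper conclusion that $x\notin\mathcal{R}_f$ alone already forces $f^{n}(x)\to\mathcal{A}_f$); the paper's route buys explicit constants for the drift and the contraction of the slab, which are not used elsewhere. The only cosmetic slip is that you count ``three'' open conditions where four must be imposed simultaneously --- the backward transient inclusion, dual to the forward one, is also needed for the statement about $f^{-n}(x)$ --- but it is open for exactly the same compactness reason, so nothing is lost.
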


\begin{proof}
Let $\U_1$ and $\eta$ be given by Proposition \ref{topar}.
Setting
\begin{equation*}
\zeta=\min \left\{|p-K^{\pm 1}(p)|: p\in [\eta,1/2-\eta]\cup [1/2+\eta, 1-\eta]\right\}.
\end{equation*}
Since \( K \) is a Morse–Smale diffeomorphism on \( \mathbb{S}^1 \) with exactly two fixed points, it follows that \( \zeta > 0 \).
Choose an open neighborhood $\U_2\subset \U_1$ of $f_0$ such that
$$
d(f^{\pm 1}(x),f_0^{\pm 1}(x))<\min\left\{\frac{\zeta}{2}, \frac{(1-\theta)}{2}\eta\right\}
$$
for every $x\in \TT^5$ and $f\in \U_2$.
Here, the metric \( d \) can also be understood as the standard metric on the lifted space \( \mathbb{R}^5 \).

\medskip
Given $f\in \U_2$, let us show the contraction on the set $\TT^4\times (-\eta,\eta)$ firstly.
Let $x=(a,b)\in \TT^4\times [-\eta,\eta]$. As $f_0(\TT^4\times \{0\})=\TT^4\times \{0\}$, $f_0(a,0)\in \TT^4\times \{0\}$, it follows that
\begin{eqnarray*}
d(f(a,b),\TT^4\times \{0\})&\le& d(f(a,b), f(a,0))+d(f(a,0),f_0(a,0))\\
&+ & d(f_0(a,0), \TT^4\times \{0\})\\
&\le & d(f(a,b), f(a,0))+\frac{(1-\theta)}{2}\eta.
\end{eqnarray*}
We assume without loss of generality that $(a,b)$ is contained in the line segment
$$
L_a=\{a\}\times [0,\eta]\subset \TT^4\times [-\eta,\eta],
$$
which is tangent to $T\SS^1$ everywhere. Take \eqref{exttonei} into account, we get
%$$
%d(f(a,b), f(a,0))\le {\rm length}(f(L_a))\le \sup_{x\in L_a}||Df|_{T_x\SS^1}\|\cdot {\rm length}(L_a)<\theta\eta.
%$$
\begin{eqnarray*}
d(f(a,b), f(a,0))&\le& {\rm length}(f(L_a))\\
&\le &\sup_{x\in L_a}||Df|_{T_x\SS^1}\|\cdot {\rm length}(L_a)\\
&< & \theta\eta.
\end{eqnarray*}
%{\color{red}Let \( \pi_{\SS^1} \) denote the projection map from \( \mathbb{T}^4 \times \SS^1 \) onto \( \SS^1 \).}
Consequently, we have 
$$
|\pi_{\SS^1}(f(a,b))|\le d(f(a,b),\TT^4\times \{0\})<\theta\eta+\frac{1-\theta}{2}\cdot\eta=\frac{1+\theta}{2}\cdot\eta,
$$
thus,
$$
f\left(\TT^4\times [-\eta,\eta]\right)\subset \TT^4\times \left(-\frac{1+\theta}{2}\eta,\frac{1+\theta}{2}\eta\right)\subset \TT^4\times (-\eta,\eta) .
$$
This implies that 
$$
\mathcal{A}_f=\bigcap_{n=1}^{\infty}f^n\left(\TT^4\times (-\eta,\eta)\right)
$$
is an attractor of $f$. So, we have 
\begin{equation}\label{gopositive}
f^n(x)\to \mathcal{A}_f, \quad \forall~x\in \TT^4\times (-\eta,\eta).
\end{equation}
Using the same argument, we can show that 
$$
\mathcal{R}_f=\bigcap_{n=1}^{\infty}f^{-n}\left(\TT^4\times (1/2-\eta,1/2+\eta)\right)
$$
is a repeller of $f$, i.e., attractor of $f^{-1}$ with the property
\begin{equation}\label{gonegative}
f^{-n}(x)\to \mathcal{R}_f, \quad \forall~x\in \TT^4\times (1/2-\eta,1/2+\eta).
\end{equation}
By the contractions on determines 
$$
\max_{x\in \TT^4\times [-\eta,\eta]}|{\rm det} Df(x)|<\theta,\quad \max_{x\in \TT^4\times [1/2-\eta,1/2+\eta]}|{\rm det} Df^{-1}(x)|<\theta,
$$
we know that 
$$
{\rm Leb}(\mathcal{A}_f)={\rm Leb}(\mathcal{R}_f)=0.
$$

To see \eqref{orbit} we need to analysis the dynamical behavior of points in $\TT^4\times [\eta,1/2-\eta]\cup [1/2+\eta, 1-\eta]$.

\begin{claim}\label{eno}
For any $f\in \U_2$ and $x\in \TT^4\times [\eta,1/2-\eta]\cup [1/2+\eta, 1-\eta]$, there exist $n_1,n_2\in \NN$ such that 
$$
f^{n_1}(x)\in  \TT^4\times (-\eta,\eta), \quad f^{-n_2}(x)\in  \TT^4\times (1/2-\eta,1/2+\eta).
$$
\end{claim}

\begin{proof}
%Let $\pi_{\SS^1}: \TT^4\times \SS^1\to \SS^1: (a,b)\mapsto b$ be the projection map, then the diagram 
%$$
%\begin{tikzcd}
%   \TT^4\times \SS^1 \arrow[r, "f"] \arrow[d,"\pi_{\SS^1}"] &  \TT^4\times \times \SS^1  \arrow[d, "\pi_{\SS^1}"] \\
%    \SS^1 \arrow[r, "K"] & \SS^1       
%\end{tikzcd}
%$$
%is commutative, i.e., $\pi_{\SS^1}\circ f=K\circ \pi_{\SS^1}$. 
Suppose $x\in \TT^4\times [\eta,1/2-\eta]$ without loss of generality. 
Recall that $\pi_{\SS^1}$ is the projection map from $\TT^4\times \SS^1$ to $\SS^1$. 
One can check directly that 
$$
K(\pi_{\SS^1}(x))=\pi_{\SS^1}(f_0(x)),\quad \forall~x\in \TT^5.
$$ 
Observe also that
$$
\pi_{\SS^1}(x)-K(\pi_{\SS^1}(x))\ge \zeta,\quad d(f(x),f_0(x))\le \zeta/2,
$$ 
Altogether, we get
\begin{eqnarray*}
\pi_{\SS^1}(x)-\pi_{\SS^1}(f(x)) &=& \left(\pi_{\SS^1}(x)-K(\pi_{\SS^1}(x))\right)-(\pi_{\SS^1}(f(x))-\pi_{\SS^1}(f_0(x)))\\
&\ge & \left(\pi_{\SS^1}(x)-K(\pi_{\SS^1}(x))\right)-|\pi_{\SS^1}(f(x))-\pi_{\SS^1}(f_0(x))|\\
&\ge & \zeta-d(f(x),f_0(x))\\
&\ge & \zeta-\zeta/2\\
&=& \zeta/2.
\end{eqnarray*}
Therefore, we have
$
\pi_{\SS^1}(f^{n+1}(x))\le \pi_{\SS^1}(f^n(x))-\zeta/2
$
as long as $f^n(x)\in \TT^4\times [\eta,1/2-\eta]$. This implies that there exists some $n_1\in \NN$ so that 
$$
f^{n_1}(x)\in \TT^4\times [0,\eta)\subset \TT^4\times (-\eta,\eta).
$$ 
By considering $f^{-1}, f_0^{-1}, K^{-1}$ instead, with the similar argument we can show that there exists $n_2\in \NN$ so that
$$
f^{-n_2}(x)\in  \TT^4\times (1/2-\eta,1/2+\eta).
$$
\end{proof}

We continue the proof of Proposition \ref{invariant}.
Fix any point $x\notin \mathcal{A}_f\cup \mathcal{R}_f$, we show \eqref{orbit} by its location into three cases:
\begin{itemize}
\item When $x\in\TT^4\times [\eta,1/2-\eta]\cup [1/2+\eta, 1-\eta]$, by Claim \ref{eno}, there are $n_1,n_2\in \NN$ such that 
$$
f^{n_1}(x)\in  \TT^4\times (-\eta,\eta), \quad f^{-n_2}(x)\in  \TT^4\times (1/2-\eta,1/2+\eta).
$$
Using \eqref{gopositive} and \eqref{gonegative} for points $f^{n_1}(x)$ and $f^{-n_2}(x)$ respectively, we see
$$
f^n(x)\to \mathcal{A}_f,\quad f^{-n}(x)\to \mathcal{R}_f
$$
as $n$ goes to infinity. 
\smallskip
\item
When $x\in \TT^4\times (-\eta,\eta)$, we know from \eqref{gopositive} that $f^n(x)\to \mathcal{A}_f$ as $n\to \infty$. Since $x\notin \mathcal{A}_f$, there is $m\in \NN$ satisfying
$
f^{-m}(x)\notin \TT^4\times (-\eta,\eta).
$
If $f^{-m}(x)\in \TT^4\times (1/2-\eta,1/2+\eta)$, then we have
$f^{-n}(x)\to \mathcal{R}_f$ as $n\to +\infty$ by \eqref{gonegative};
otherwise, 
$$f^{-m}(x)\in \TT^4\times [\eta,1/2-\eta]\cup [1/2+\eta, 1-\eta],$$ 
then some negative iterate of $f^{-m}(x)$ will go to $\TT^4\times (1/2-\eta,1/2+\eta)$, which case we have checked.
\smallskip
\item
When $x\in \TT^4\times (1/2-\eta,1/2+\eta)$, we get the desired result by observing $x\in \mathcal{R}_f$, using the similar argument as in the above case.
\end{itemize}

Now we complete the proof of Proposition \ref{invariant}.

\end{proof}

\begin{remark}
Using the invariance of $\TT^4\times \{a_1\}$ and $\TT^4\times \{a_2\}$ under $f_0$, one can show with the argument in the proof of Proposition \ref{invariant} that 
$$
\mathcal{A}_{f_0}=\TT^4\times \{a_1\},\quad  \mathcal{R}_{f_0}=\TT^4\times \{a_2\}.
$$
Moreover, we can verify that for any $\varepsilon>0$, there exists a $C^0$ neighborhood $\mathcal{V}$ of $f_0$ such that 
$$
\mathcal{A}_f\subset B(\mathcal{A}_{f_0},\varepsilon),\quad \mathcal{R}_f\subset B(\mathcal{R}_{f_0},\varepsilon).
$$
whenever $f\in \V$. Here \( B(\mathcal{A}_{f_0}, \varepsilon) \) denotes the \( \varepsilon \)-neighborhood of \( \mathcal{A}_{f_0} \), and \( B(\mathcal{R}_{f_0}, \varepsilon) \) is similarly the \( \varepsilon \)-neighborhood of \( \mathcal{R}_{f_0} \).
\end{remark}

As a consequence of Proposition \ref{invariant}, we have the following two results:

\begin{corollary}\label{invm}
Let $f\in \U_2$. Any $f$-invariant measure $\mu$ is supported on $\mathcal{A}_f\cup \mathcal{R}_f$, which satisfies 
\begin{equation}\label{2g}
|\chi(x,E_{2}^c)|\ge -\log \theta,\quad \mu-\textrm{a.e.}~x.
\end{equation}
\end{corollary}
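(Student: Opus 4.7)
The strategy is to chain the Poincar\'e recurrence theorem with the dynamical description in Proposition \ref{invariant}, and then convert the uniform norm bounds in item (\ref{nu2}) of that proposition into Lyapunov exponent bounds by exploiting that $E_2^c$ is one-dimensional. Neither half is deep, but both halves rely essentially on what has already been established.

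For the support claim, both $\mathcal{A}_f$ and $\mathcal{R}_f$ are closed and $f$-invariant (the first as the maximal invariant set in a forward-trapping region, the second as the analogous object for $f^{-1}$), hence $\mathcal{A}_f\cup\mathcal{R}_f$ is closed and $f$-invariant. If $x\notin \mathcal{A}_f\cup \mathcal{R}_f$, item (\ref{orbit}) of Proposition \ref{invariant} gives $f^n(x)\to \mathcal{A}_f$, so $\omega(x)\subset \mathcal{A}_f$; since $x\notin \mathcal{A}_f$, one has $x\notin \omega(x)$, and $x$ is non-recurrent. The Poincar\'e recurrence theorem therefore forces $\mu(\mathcal{A}_f\cup \mathcal{R}_f)=1$, and closedness upgrades this to $\mathrm{supp}\,\mu\subset \mathcal{A}_f\cup \mathcal{R}_f$.

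For the exponent bound, since $\dim E_2^c=1$ we have the multiplicative identity
$$
\|Df^n|_{E_2^c(x)}\|=\prod_{i=0}^{n-1}\|Df|_{E_2^c(f^i(x))}\|.
$$
When $x\in\mathcal{A}_f$, the $f$-invariance of $\mathcal{A}_f$ combined with item (\ref{nu2}) gives each factor $\le \theta$, so $\chi(x,E_2^c)\le \log\theta<0$. When $x\in\mathcal{R}_f$, item (\ref{nu2}) yields $\|Df^{-1}|_{E_2^c(y)}\|\le \theta$ for every $y\in\mathcal{R}_f$; since $E_2^c$ is one-dimensional we have $\|Df|_{E_2^c(f^{-1}(y))}\|=\|Df^{-1}|_{E_2^c(y)}\|^{-1}$, and the $f$-invariance of $\mathcal{R}_f$ rephrases this bound as $\|Df|_{E_2^c(z)}\|\ge \theta^{-1}$ for every $z\in\mathcal{R}_f$. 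Hence each factor in the product is at least $\theta^{-1}$, giving $\chi(x,E_2^c)\ge -\log\theta>0$. In either case $|\chi(x,E_2^c)|\ge -\log\theta$ at $\mu$-a.e.\ $x$.

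No serious obstacle is expected: the argument rests squarely on Proposition \ref{invariant} plus Poincar\'e recurrence, and one-dimensionality of $E_2^c$ is exactly what makes the passage from pointwise norm bounds to exponent bounds automatic (no cocycle estimates are needed). The only small subtlety is to remember to rephrase the uniform $f^{-1}$-contraction on $\mathcal{R}_f$ as a uniform $f$-expansion on the same set, which is legitimate only because $\mathcal{R}_f$ is itself $f$-invariant and hence $f$ acts bijectively on it.
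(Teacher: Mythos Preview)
Your proof is correct and matches the paper's approach: the paper's proof is a single sentence stating that the result ``can be deduced from \eqref{orbit} and \eqref{nu2} of Proposition \ref{invariant},'' and you have simply supplied the details --- Poincar\'e recurrence for the support claim, and the multiplicativity along a one-dimensional bundle for the exponent bound.
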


\begin{proof}
This can be deduced from \eqref{orbit} and \eqref{nu2} of Proposition \ref{invariant}.
%Let us fix any $g\in \U_2$.
%It follows from of Proposition \ref{invariant}, the non-wandering set of $g$ is contained in $\mathcal{A}_g\cup \mathcal{R}_g$, thus any $g$-invariant measure must be supported on $\mathcal{A}_g\cup \mathcal{R}_g$. We obtain \eqref{2g} from 
\end{proof}

\begin{corollary}\label{gue2}
For any $f\in \U_2$, there exists a subset $\Gamma_f$ of full Lebesgue measure in $\TT^5$ such that for any $x\in \Gamma_f$, any $\mu\in \mathscr{M}(x,f)$ is a Gibbs $u$-state supported on $\mathcal{A}_f$, satisfying
\begin{equation}\label{pwl}
\chi(x,E_{2}^c)\le \log \theta,\quad \mu-\textrm{a.e.}~x.
\end{equation}
\end{corollary}

\begin{proof}
By \eqref{ufull} in Proposition \ref{basicu}, for any $f\in \U_2$, there exists a full Lebesgue measure subset $G_f$ of $\TT^5$ such that $\mathscr{M}(x,f)\subset \G^u(f)$ for any $x\in G_f$. Let us take
$$
\Gamma_f=G_f\setminus (\mathcal{A}_f\cup \mathcal{R}_f).
$$
By \eqref{zerol} of Proposition \ref{invariant}, we have ${\rm Leb}(\mathcal{A}_f\cup \mathcal{R}_f)=0$. Thus, $\Gamma_f$ has full Lebesgue measure in $\TT^5$ as well. As we have shown in \eqref{orbit},
$$
f^n(x)\to \mathcal{A}_f ~\textrm{as}~ n\to +\infty, ~\forall~ x\in \Gamma_f.
$$
This guarantees that for any $x\in \Gamma_f$, any $\mu\in \mathscr{M}(x,f)$ is a Gibbs $u$-state supported on $\mathcal{A}_f$. One gets \eqref{pwl} by \eqref{nu2} of Proposition \ref{invariant}.
%
%knows that
%$$
%\chi(x,E_{2,g}^c)=\lim_{n\to +\infty}\frac{1}{n}\log \|Dg^{n}|_{E_{2,g}^c(x)}\|\le \log \theta
%$$
%holds for $\mu$-almost every $x$.
\end{proof}

\subsubsection{Mostly expanding along $E_1^c$}

The next result asserts that any $C^1$ diffeomorphism $C^1$-close to $f_0$ are mostly expanding along the stronger sub-bundle $E_1^c$. i.e., every Gibbs $u$-state of $f$ admits uniformly positive Lyapunov exponents along $E_1^c$. 

\begin{proposition}\label{ge}
There exists a real number $\beta>0$ and a $C^1$ open neighborhood $\U_3$ of $f_0$ such that
$$
\int\log \|Df|_{E_{1}^c}\|{\rm d}\mu>\beta,\quad \forall~ \mu\in \mathcal{G}^u(f), \quad \forall~ f\in \U_3.
$$
\end{proposition}

%As a consequence, we know that any Gibbs $u$-state of $f$admits uniformly positive Lyapunov exponents along $E_1^c$. 
%This can be deduced from the next result, using the upper semi-continuity of Gibbs $u$-states (see \eqref{upp} of Proposition \ref{basicu}).
%\begin{proposition}\label{gEd}
%There exists a constant $\beta>0$ such that
%$$
%\int\log \|Df_0|_{E_{1}^c}\|d\mu>\beta,\quad \forall ~\mu\in \mathcal{G}^u(f_0).
%$$
%\end{proposition}
To show Proposition \ref{ge}, we need the next observation.

\begin{lemma}\label{u0g}
We have that
$$
\mu(U_0)<\frac{\log \lambda}{\log 3 \lambda},\quad \forall~ \mu\in \mathcal{G}^u(f_0).
$$
\end{lemma}

\begin{proof}
By ergodic decomposition theorem (see e.g. \cite[Theorem 5.13]{VO16}) and \eqref{cpu} in Proposition \ref{basicu}, it suffices to verify this result for ergodic Gibbs $u$-states of $f_0$. To this end, let us introduce the projection $\pi: \TT^2\times \TT^2\times \SS^1\to \TT^2: (a,b,c)\mapsto a$.
%{\color{red}Note that, by Remark~\ref{leaves}, the modified map only alters the expansion and contraction along the leaves of the original product system that are tangent to $\mathcal{E}^u$. Then we have}
We see from the definition of $f_0$ that
\begin{equation}\label{comm}
\pi \circ f_0=A^{2}\circ \pi.
\end{equation}
%thus the following diagram
%$$
%\begin{tikzcd}
%    \TT^2\times \TT^2\times \SS^1 \arrow[r, "f"] \arrow[d,"\pi"] &  \TT^2\times \TT^2\times \SS^1  \arrow[d, "\pi"] \\
%    \TT^2 \arrow[r, "A^N"] & \TT^2       
%\end{tikzcd}
%$$
%is commutative.
%The projection $\pi$ sends strong unstable manifolds w.r.t. $f$ to strong unstable manifolds w.r.t. $A^N$. More precisely, $\pi$ sends strong unstable manifolds of points in $\{a\}\times \TT^2\times \SS^1$ w.r.t. $f$ to the unique strong unstable manifold of $a$ w.r.t. $A^N$, restricted on each unstable manifold w.r.t. $f$, the projection $p$ is a diffeomorphism. 
%
%\begin{claim}
%If $\mu$ is an ergodic Gibbs $u$-state of $f$, then $\pi_{\ast}\mu={\rm Leb}_{\TT^2}$.
%\end{claim}
%\begin{proof}
For any given $\mu\in \G_{erg}^u(f_0)$, we claim that $\pi_{\ast}\mu\in \G_{erg}^u(A^{2})$. The ergodicity of $\pi_{\ast}\mu$ can be checked by definition directly, now we show $\pi_{\ast}\mu$ is a Gibbs $u$-state of $A^{2}$.
Since the ergodicity of $\mu$ implies that the basin $B(\mu,f)$ exhibits full $\mu$-measure, one can find a subset $D'$ contained in a local strong unstable manifold with full Lebesgue measure so that 
$$
\lim_{n\to\infty}\frac{1}{n}\sum_{i=0}^{n-1}\delta_{f_0^i(x)}=\mu, \quad \forall~ x\in D'.
$$
Since $\pi$ sends each strong unstable manifold of point in $\{a\}\times \TT^2\times \SS^1$ w.r.t.$f_0$ to the unique strong unstable manifold of $a$ w.r.t. $A^{2}$. Restricted on each strong unstable manifold w.r.t. $f_0$, the projection $\pi$ is a smooth diffeomorphism.  Using \eqref{comm}, the above convergence implies
$$
\lim_{n\to\infty}\frac{1}{n}\sum_{i=0}^{n-1}\delta_{A^{2i}(\pi(x))}=\pi_{\ast}\mu, \quad \forall~ x\in D'.
$$
Observe that $\pi(D')$ admits positive Lebesgue measure inside the corresponding strong unstable manifold of $A^{2}$. By applying \cite[Theorem C]{CYZ18}(or \cite[Theorem A]{HYY19}), we know $\pi_{\ast}\mu$ is a Gibbs $u$-state, thus $\pi_{\ast}\mu\in \G_{erg}^u(A^{2})$.

Observe that ${\rm Leb}_{\TT^2}$ is the unique Gibbs $u$-state of $A^{2}$, we know
$$
\pi_{\ast}\mu={\rm Leb}_{\TT^2}.
$$
Recalling that $U_0= [-\delta_0,\delta_0]^5$, and applying (H\ref{three}), this yields
\begin{eqnarray*}\label{}
\mu(U_0)&\le &\mu([-\delta_0,\delta_0]^2\times \TT^2\times \SS^1)\\
&=&\pi_{\ast}\mu([-\delta_0,\delta_0]^2)\\
&=&{\rm Leb}_{\TT^2}([-\delta_0,\delta_0]^2)\\
&< &\frac{\log \lambda}{\log 3 \lambda}.
\end{eqnarray*}
This completes the proof.
\end{proof}

Now we can give the proof of Proposition \ref{ge} as follows:

\begin{proof}[Proof of Proposition \ref{ge}]
To show the result, it suffices to find a constant $\beta>0$ such that 
\begin{equation}\label{11g}
\int\log \|Df_0|_{E_{1}^c}\|{\rm d}\mu>\beta,\quad \forall ~\mu\in \mathcal{G}_{erg}^u(f_0).
\end{equation}
Indeed, by applying \eqref{cpu} in Proposition \ref{basicu}, we conclude from \eqref{11g} that 
\begin{equation}\label{22g}
\int\log \|Df_0|_{E_{1}^c}\|{\rm d}\mu>\beta,\quad \forall ~\mu\in \mathcal{G}^u(f_0).
\end{equation}
%By \eqref{cpu} of Proposition \ref{basicu}, it suffices to show that there exists $\beta>0$ so that 
%$$
%\chi(E_1^c,\mu)=\int\log \|Df|_{E}\|d\mu>\beta,\quad \forall \mu\in \mathcal{G}_{erg}^u(f).
%$$
This gives the desired result for $\beta$.
If the conclusion is not true, then there exists a sequence of diffeomorphisms $f_n$ that converges to $f_0$ in $C^1$-topology, 
and $\mu_n\in \G^u(f_n)$ satisfying
$$
\int\log \|Df_n|_{E_{1}^c}\|{\rm d}\mu_n\le \beta.
$$
Without loss of generality, we assume $\mu_n\to \mu$ as $n\to +\infty$. Since $\log \|Df_n|_{E_{1}^c}\|$ converges to $\log \|Df_0|_{E_{1}^c}\|$ uniformly, which together with the convergence $\mu_n\to \mu$ gives
$$
\int\log \|Df_0|_{E_{1}^c}\|{\rm d}\mu=\lim_{n\to +\infty}\int\log \|Df_n|_{E_{1}^c}\|{\rm d}\mu_n\le \beta.
$$
Thus, we get a contraction to \eqref{22g}.

It remains to verify \eqref{11g}.
By \eqref{peb} in Lemma \ref{splitting}, we get
$$
\|Df_0|_{E_1^c(x)}\|=\frac{\partial }{\partial x_2}P_k(x)\ge 1/2, \quad x\in U_0.
$$
On the other hand, we have $\|Df_0|_{E_1^c}\|=\lambda$ whenever $x\notin U_0$. Thus, for any $\mu\in \G^u(f_0)$ we have
\begin{eqnarray*}
\int \log \|Df_0|_{E_1^c}\|{\rm d}\mu&=&\int_{U_0}\log \|Df_0|_{E_1^c}\|{\rm d}\mu+\int_{\TT^5\setminus {U_0}}\log \|Df_0|_{E_1^c}\|{\rm d}\mu\\
&\ge& \mu(U_0)\log \frac{1}{2}+(1-\mu(U_0))\log \lambda\\
&=&\mu(U_0)\left(\log \frac{1}{2}-\log \lambda\right)+\log \lambda\\
&=&\mu(U_0)\cdot(-\log(2\lambda))+\log\lambda.
\end{eqnarray*}
By Lemma \ref{u0g}, we get 
$$
\int \log \|Df_0|_{E_1^c}\|{\rm d}\mu>\left(1-\frac{\log 2\lambda}{\log 3\lambda}\right)\cdot \log \lambda.
$$
Thus, we conclude the result after taking 
$$
\beta=\left(1-\frac{\log 2\lambda}{\log 3\lambda}\right)\cdot \log \lambda>0.
$$
\end{proof}

%To be summarized, we have 
%\begin{theorem}\label{gtop}
%There exists a $C^1$ open subset $\U\subset {\rm Diff}^{1}(\mathbb{T}^5)$ such that for every $g\in \U$, there exists a partially hyperbolic splitting
%$$
%T(\mathbb{T}^5)=E_g^{u}\oplus_{\succ} E_{1,g}^c\oplus_{\succ} E_{2,g}^c\oplus_{\succ} E_g^{s}
%$$
%such that
%\begin{enumerate}
%\item \label{nonun} neither $E_{1,g}^c$ nor $E_{2,g}^c$ is uniformly contracting/expanding;
%\smallskip
%\item \label{dimension} ${\rm dim}E_g^u={\rm dim}E_{1,g}^c={\rm dim}E_{2,g}^c=1$, ${\rm dim}E_g^s=2$;
%\smallskip
%\item \label{hyperbolic} any Gibbs $u$-state of $g$ is hyperbolic,
%\smallskip
%\item \label{empirical} for ${\rm Leb}$-a.e. $x\in \mathbb{T}^5$, any measure of $\mathscr{M}(x,g)$ admits only positive Lyapunov exponents along $E_{1,g}^c$ and negative Lyapunov exponents along $E_{2,g}^c$.
%\end{enumerate}
%\end{theorem}

\subsubsection{Proof of Theorem \ref{gtop}}

%\begin{proof}[Proof of Theorem \ref{gtop}]
Take $\U=\U_2\cap\U_3$, where $\U_2$ and $\U_3$ are the $C^1$ open neighborhoods of $f_0$ provided in Proposition \ref{invariant} and Proposition \ref{ge}, respectively. Therefore, any $f\in \U$ exhibits the partially hyperbolic splitting
$$
T(\mathbb{T}^5)=E^{u}\oplus_{\succ} E_{1}^c\oplus_{\succ} E_{2}^c\oplus_{\succ} E^{s}.
$$
We check firstly that $f\in \U$ satisfies the properties \eqref{nonun} to \eqref{empirical} as follows:
\begin{enumerate}[(1)]
\smallskip
\item 
According to \eqref{E1nh} of Lemma \ref{first} and property \eqref{nu2} of Proposition \ref{invariant}, one knows that, up to shrinking $\U$, the central sub-bundle $E_{1}^c, E_{2}^c$ are no uniformly contracting/expanding.  
\smallskip
\item
Item \eqref{dimension} is guaranteed by the construction of $f_0=F_k$ in Lemma \ref{first} and the $C^1$-robustness of partially hyperbolic splitting. 
\smallskip
\item
Now we show Item \eqref{hyperbolic}. For any $f\in \U$, let $\mu\in \G^u(f)$. From Proposition \ref{ge}, it follows that all the Lyapunov exponents along $E_1^c$ of $\mu$ are larger than some constant $\beta>0$. By Corollary \ref{invm}, the Lyapunov exponents of $\mu$ along $E_2^c$ are bounded away from zero. 
\smallskip
\item\label{fpl} By Corollary \ref{gue2},  for ${\rm Leb}$-a.e. $x\in \mathbb{T}^5$, any $\mu\in \mathscr{M}(x,f)$ is a Gibbs $u$-state supported on $\mathcal{A}_f$, which also admits Lyapunov exponents along $E_2^c$ smaller than $\log \theta<0$. By Proposition \ref{ge}, the Lyapunov exponents of $\mu$ along $E_1^c$ are larger than $\beta>0$.  
\end{enumerate}
%\end{proof}

It remains to give the proof of \eqref{uniqueness}. 
%Let $\U$ be the open neighborhood of $f_0$ provided in Theorem \ref{gtop}. Thus, for any $f\in \U$,
%\begin{itemize}
%\item[--] there exists a partially hyperbolic splitting with two one-dimensional centers:
%$$
%T(\mathbb{T}^5)=E^{u}\oplus_{\succ} E_{1}^c\oplus_{\succ} E_{2}^c\oplus_{\succ} E^{s}, \quad {\rm dim}E_{1}^c={\rm dim}E_{2}^c=1;
%$$ 
%\item[--] any Gibbs $u$-state of $f$ is hyperbolic, 
%\smallskip
%\item[--] for ${\rm Leb}$-a.e. $x\in \mathbb{T}^5$, any measure of $\mathscr{M}(x,f)$ admits only positive Lyapunov exponents along $E_{1}^c$ and negative Lyapunov exponents along $E_{2}^c$, both bounded away from zero.
%\end{itemize}
By \cite[Theorem B]{MC25}, any $C^{1+}$ diffeomorphism $f\in\U$ exhibits finitely many physical measures, which are  ergodic SRB measures whose union of basins has full Lebesgue measure in $\TT^5$. 
%Denote by \( \mathcal{P}_f \) the set of physical measures of \( f \in \mathrm{Diff}^{1+}(\mathbb{T}^5) \cap \mathcal{U} \). 
Let $\mu$ be a physical measure of $C^{1+}$ diffeomorphism $f\in \U$. Since $B(\mu,f)$ admits positive Lebesgue measure and $\mathscr{M}(x,f)=\{\mu\}$ for every $x\in B(\mu,f)$, we know from \eqref{fpl} that $\mu$ is supported on the attractor $\mathcal{A}_f$, and it admits only positive Lyapunov exponents along $E_{1}^c$ and negative Lyapunov exponents along $E_{2}^c$. 

Thus, it suffices to show that, up to reducing $\U$, any $C^{1+}$ diffeomorphism admits only one physical measure.
We need the next observation.
%Using property \eqref{empirical}, this implies that the unstable index of the physical measures is always 2. Specifically, these measures exhibit positive Lyapunov exponents along the sub-bundle \( E^u \oplus E_1^c \) and negative Lyapunov exponents along \( E_2^c \). By item~\ref{orbit} in Theorem~\ref{invariant}, the supports of the physical measures are all contained in \( \mathcal{A}_f \).   

\begin{claim}\label{sf}
There exists \( \rho > 0 \) such that for every \( C^{1+} \) diffeomorphism \( f \in \mathcal{U} \) and every physical measure \( \mu \) of \( f \), there exists an unstable disk \( D_{\rho}(\mu)\subset \mathcal{A}_f \) of size \( \rho \) such that Lebesgue almost every point in \( D_{\rho}(\mu) \) belongs to the basin \( B(\mu, f) \).
\end{claim}

As $f_0$ preserves the set $\TT^4\times \{a_1\}$, let us consider the diffeomorphism $\widehat{f}_0=f_0|_{\TT^4\times \{a_1\}}$.
By construction, $\widehat{f}_0$ is a skew product over $\TT^2\times \TT^2$ of the form $\widehat{f}_0(x,y)=(A^{2}(x),h_x(y))$ satisfying
\begin{itemize}
\item $A^{2}$ is a transitive Anosov diffeomorphism,
\item $h_p=A$ for the fixed point $p\in \TT^2\setminus [-\delta_0,\delta_0]^2$ of $A$ (recall (H\ref{pou})).
\end{itemize}
One can check that the stable manifold of the fixed point $(p,p)\in \TT^4$ of $\widehat{f}_0$ is dense in $\TT^4$. Denote by ${\bf{p}}=(p,p,a_1)\in \TT^2\times \TT^2\times \SS^1=\TT^5$, which is a hyperbolic fixed point of $f_0$. As $f_0$ is uniformly contracting along $E_2^c$ inside $\TT^4\times [-\eta,\eta]$, one thus concluds that the global stable manifold $W^s({\bf{p}},f)$ is dense in $\TT^4\times [-\eta,\eta]$. Therefore, for any $x\in \TT^4\times [-\eta,\eta]$, if $\mathcal{D}_x$ is a disk of radius $\rho$ centered at $x$, and transverse to $E_2^c\oplus E^s$, then there is $L_x>0$ such that $\mathcal{D}_x$ intersects $W^s_{L_x}({\bf{p}},f_0)$ transversely at some point in the interior of $W^s_{L_x}({\bf{p}},f_0)$. By compactness, we can find a uniform $L>0$ such that 
$$
\mathcal{D}\pitchfork W^s_{L}({\bf{p}},f_0)\neq\emptyset
$$
for every disk $\mathcal{D}$ of size $\rho$ and transverse to $E^c_2\oplus E^s$.
By applying the continuity of stable manifold of fixed point w.r.t. diffeomorphisms, one knows that up to reducing $\U$ if necessary, for the continuation ${\bf{p}}_f$ of every $f\in \U$, it holds that
$$
\mathcal{D}\pitchfork W^s_{L}({\bf{p}}_f,f)\neq\emptyset
$$
for every disk $\mathcal{D}$ of size $\rho$ and transverse to $E_2^c\oplus E^s$. 

Given any $C^{1+}$ diffeomorphism $f\in \U$, to show the uniqueness of physical measures, we assume by contradiction that there exist two physical measures $\mu_1$ and $\mu_2$ of $f$. 
By Claim \ref{sf}, there are two unstable disks $D_{\rho}(\mu_1)$ and $D_{\rho}(\mu_2)$ associated to $\mu_1$ and $\mu_2$ with the following properties:
\begin{itemize}
\item $D_{\rho}(\mu_i)$ is contained in the attractor $\mathcal{A}_f$ for each $i=1,2$;
\smallskip
\item Lebesgue almost every point of $D_{\rho}(\mu_i)$ is contained in $B(\mu_i,f)$ for each $i=1,2$. 
\end{itemize}
On the other hand, as both  $D_{\rho}(\mu_1)$ and $D_{\rho}(\mu_2)$ are tangent to $E^u\oplus E_1^c$, so they are transverse to $E_2^c\oplus E^s$, it follows that 
$$
D_{\rho}(\mu_i)\pitchfork W^s_{L}({\bf{p}}_f,f)\neq\emptyset,\quad i=1,2.
$$
By applying the Inclination lemma \cite[Theorem 5.7.2]{B02}, there exist $n_0\in \NN$ and disks $D_i\subset f^{n_0}(D_{\rho}(\mu_i))$, $i=1,2$ that are $C^1$ close to $W^s_{L}({\bf{p}}_f,f)$ with size much smaller than the uniform size of local stable manifolds in $\mathcal{A}_f$. Then, by the absolute continuity of the stable foliation, each $D_i$ intersects the local stable manifolds from $\mathcal{A}_f$ transversely with positive Lebesgue measure. Observe also that the stable manifold can only lie in the same basin, and Lebesgue almost every point of $D_i$ is contained in $B(\mu_i,f)$ for each $i=1,2$, we conclude that ${\rm Leb}_{D_1}(B(\mu_2,f))>0$ and vice versa, which leads a contradiction.

%Note that the iterates of $D_{\rho}(\mu_1)$ and $D_{\rho}(\mu_2)$ are contained in $\mathcal{A}_f$, on which the size of the stable manifolds tangent to $E_2^c\oplus E^s$ is uniform.  Thus, 
%
%
%Since $D_{\rho}(\mu_1)$ and $D_{\rho}(\mu_2)$ are contained in the attractor $\mathcal{A}_f$, on which $E_2^c\oplus E^s$ is uniformly contracting, see from \eqref{nu2} in Proposition \ref{invariant}, we have the following fact:
%\begin{itemize}
%\item The iterates of $D_{\rho}(\mu_1)$ and $D_{\rho}(\mu_2)$ are contained in $\mathcal{A}_f\subset \TT^4\times[-\eta,\eta]$. 
%\item The size of the stable manifolds in $\mathcal{A}_f$ is uniform.
%\end{itemize}
%
%{\color{red}Note that the unstable manifold of each point in the attractor is contained in the attractor.}
%Using the Inclination Lemma, one concludes that the iterates of $D_{\rho}(\mu_1)$ and $D_{\rho}(\mu_2)$ are contained in $\mathcal{A}_f\subset \TT^4\times[-\eta,\eta]$ and would $C^1$-close to each other. Observe that the size of the stable manifolds in $\mathcal{A}_f$ is uniform. {\color{red}By the absolute continuity of the stable foliation, these iterates intersect the stable manifolds of points in \( \mathcal{A}_f \) transversely, with full Lebesgue measure on each corresponding \( f^n(D_{\rho}(\mu_i)) \). Since stable manifolds are entirely contained within the basin of a single physical measure, this implies that points from different basins would share common stable manifolds, leading to a contradiction.}

%\end{proof}
\medskip
We then left to show the claim.

\begin{proof}[Proof of Claim \ref{sf}]
Let $\beta>0$ be the constant given by Proposition \ref{ge}. For every $f\in \U$, define
$$
\Lambda_{\beta}(f)=\{x\in \TT^5: \prod_{i=0}^{n-1}\|Df^{-1}|_{E_1^c(f^{-i}(x))}\|\le {\rm e}^{-\beta n},\quad \forall~ n\in \NN\}.
$$
By applying the result \cite[Lemma 3.2]{MC21}, one knows that up to reducing $\U$, there exists a uniform $r_{\beta}>0$ such that for any $f\in \U$, any $x\in \Lambda_{\beta}(f)$ admits a local unstable manifold $W^u_{r_{\beta}}(x)$ tangent to $E^u\oplus E_{1}^c$ of size $r_{\beta}$.

Let us fix $f\in {\rm Diff}^{1+}(\TT^5)\cap \U$. For any physical measure $\mu$ of $f$, since it is also a Gibbs $u$-state, following Proposition \ref{ge}, it holds that
$
\chi(E_1^c,\mu)>\beta,
$
which implies 
\begin{equation}\label{touni}
\mu(\Lambda_{\beta}(f))>0,
\end{equation}
by applying the result \cite[Lemma 4.5]{MWC24}. We then have $\mu(B(\mu,f)\cap \Lambda_{\beta}(f))>0$ as $\mu$ is ergodic. 
%it implies
%where 
%$$
%\Lambda_{\beta}(f)=\{x\in \TT^5: \prod_{i=0}^{n-1}\|Df^{-1}|_{E_1^c(f^{-i}(x))}\|\le {\rm e}^{\beta n},\quad \forall~ n\in \NN\}.
%$$
%Up to reducing $\U$, the uniform contraction implies that there exists $\rho>0$ such that for any $f\in \U$, any $x\in \Lambda_{\beta}(f)$ admits an unstable disk tangent to $E^u\oplus E_{1}^c$ of size $\rho$(see e.g. \cite[Lemma 3.2]{MC21}).
%Let us fix any $C^{1+}$ diffeomorphism $f$ in $\U$ and fix a physical measure $\mu$ of $f$. Since every physical measure is also an ergodic Gibbs $u$-state, it follows that
%\begin{equation}\label{touni}
%\mu(\Lambda_{\beta}(f))>0.
%\end{equation}
%Moreover, 
Observe that \( \mu \) is also an ergodic SRB measure with unstable index $2$, its conditional measures along Pesin unstable manifolds tangent to $E^u\oplus E_1^c$ are, in fact, equivalent to the Lebesgue measures on these manifolds. Therefore, one can find an unstable disk \( D_{\rho}(\mu) \subset W^u_{r_{\beta}}(x) \) of radius \( \rho \), centered at some point in \( \Lambda_{\beta}(f) \), such that Lebesgue almost every point in $D_{\rho}(\mu)$ is contained in the basin of \( \mu \), where \( \rho \) is chosen independent of \( f \) and \( \mu \) (see \cite[Lemma 4.6]{MC21} for more details).
\end{proof}

\section{Diffeomorphisms within two  physical measures of different unstable indices}\label{twop}

The main goal of this section is to prove Theorem \ref{maintwo} for the case $m=2$, which is deduced from the following result within more details. 

\begin{theorem}\label{yizhi}
There exists $f_0\in {\rm Diff}^{\infty}(\TT^5)$ and a \( C^1 \) open neighborhood $\U$ of $f_0$ such that any $f\in\U$ exhibits the following properties:
\begin{enumerate}[(1)]
\item\label{psr} $f$ admits a partially hyperbolic splitting $$T(\TT^5)=E^{u}\oplus_{\succ} E_1^c\oplus_{\succ} E_2^c\oplus_{\succ} E^{s}
$$
with ${\rm dim}E^{u}={\rm dim}E_1^c={\rm dim}E_2^c=1$;
\item\label{noh} neither $E_1^c$ nor $E_2^c$ is uniformly contracting/expanding;
\smallskip
\item\label{hyu} every invariant measure of $f$ is hyperbolic;
\smallskip
\item\label{twofen} if $f$ is $C^{1+}$, then there exists a full Lebesgue measure subset \( B_1 \cup B_2 \) of $\TT^5$ with \( {\rm Leb}(B_1) > 0 \) and \( {\rm Leb}(B_2) > 0 \) such that:  
\begin{itemize}
\item[--] for any $x\in B_1$, for any $\mu\in \mathscr{M}(x,f)$, it holds that 
$$
\chi(z, E_1^c)<0,\quad \mu-\textrm{a.e.}~z,
$$
\item[--] for any $x\in B_2$, for any $\mu\in \mathscr{M}(x,f)$, it holds that 
$$
\chi(z, E_1^c)>0>\chi(z, E_{2}^c),\quad \mu-\textrm{a.e.}~z;
$$
\end{itemize}
\item \label{ptw} if $f$ is $C^{1+}$, then it has two ergodic hyperbolic physical measures $\mu_1,\mu_2$ of unstable indices $1$ and $2$ respectively, and 
$$
{\rm Leb}\left(B(\mu_1,f)\cup B(\mu_2,f)\right)={\rm Leb}(\TT^5).
$$
\end{enumerate}
\end{theorem}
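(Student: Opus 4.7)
The plan is to produce $f_0$ as a skew product $\widehat B \times K$ on $\TT^5 = \TT^4 \times \SS^1$, where $K$ is the Morse--Smale diffeomorphism on $\SS^1$ from Section \ref{exampleone} and $\widehat B$ is obtained from $B = A^2 \times A$ by applying the DA-type surgery of Proposition \ref{DAz} (proved in the Appendix) to the second factor $A$. The surgery modifies $A$ only near its fixed point $0 \in \TT^2$ so that the former expanding eigendirection becomes contracting there (turning $0$ into a sink of the modified map $\widetilde A$), while (i) preserving a global dominated splitting $\widetilde E^u \oplus_\succ \widetilde E^c$ with $\widetilde E^u$ uniformly expanding and (ii) leaving a non-trivial transitive hyperbolic set $\Lambda_0 \subset \TT^2$ on which $\widetilde E^c$ remains uniformly expanding. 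Setting $\widehat B := A^2 \times \widetilde A$ and $f_0 := \widehat B \times K$, the three dominated structures combine, as in Lemma \ref{first}, into a partially hyperbolic splitting
\[
T\TT^5 = E^u \oplus_\succ E_1^c \oplus_\succ E_2^c \oplus_\succ E^s
\]
with $E^u$ coming from $\cE^{uu}$ (eigenvalue $\lambda^2$), $E_1^c$ from $\widetilde E^c$, $E_2^c$ from $T\SS^1$, and $E^s$ from $\cE^s \oplus \cE^{ss}$. Item \eqref{psr} then persists on a $C^1$-open neighborhood $\U$ of $f_0$ by robustness of dominated splittings.

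I would next mimic the analysis of Section \ref{exampleone}. The Morse--Smale structure of $K$ together with $C^0$-closeness yields an attractor $\mathcal{A}_f$ near $\TT^4 \times \{a_1\}$ and a repeller $\mathcal{R}_f$ near $\TT^4 \times \{a_2\}$ with ${\rm Leb}(\mathcal{A}_f \cup \mathcal{R}_f) = 0$, uniform contraction of $E_2^c$ on $\mathcal{A}_f$ and uniform expansion on $\mathcal{R}_f$, exactly as in Proposition \ref{invariant}. Inside $\mathcal{A}_f$ the surgery produces a further dichotomy: on the continuation of the sink region of $\widehat B$, $E_1^c$ is uniformly contracting, while on the continuation of $\TT^2 \times \Lambda_0 \times \{a_1\}$, $E_1^c$ is uniformly expanding; symmetric statements hold on $\mathcal{R}_f$. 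This gives \eqref{noh}. For \eqref{hyu}, every $f$-invariant measure is supported on $\mathcal{A}_f \cup \mathcal{R}_f$, and its ergodic components are concentrated either on the sink region (index $1$: $E_1^c, E_2^c, E^s$ all contracting) or on the hyperbolic continuation of $\Lambda_0$ (index $2$: $E_1^c$ expanding, $E_2^c, E^s$ contracting), with exponents uniformly bounded away from zero.

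To obtain \eqref{twofen} and \eqref{ptw}, let $B_1$ and $B_2$ denote the subsets of the full-Lebesgue-measure set $\Gamma_f$ of Corollary \ref{gue2} whose forward orbit is eventually trapped, respectively, in the sink region or in a neighborhood of the hyperbolic saddle region of $\mathcal{A}_f$. For any $x \in \Gamma_f$, every $\mu \in \mathscr{M}(x,f)$ is a Gibbs $u$-state with uniformly negative $E_2^c$-exponent by Proposition \ref{invariant}; its ergodic decomposition lies in one of the two regions, yielding the prescribed sign of the $E_1^c$-exponent. Applying \cite[Theorem B]{MC25} (available because all Gibbs $u$-states are hyperbolic) produces finitely many physical measures whose basins cover a full-Lebesgue subset of $\TT^5$, partitioned by index. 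To show ${\rm Leb}(B_i) > 0$ and uniqueness within each index class I would reuse the strategy of Claim \ref{sf}: density of the stable manifold of a well-chosen hyperbolic fixed point in each region, combined with absolute continuity of the stable foliation, forces any two unstable disks of size $\rho$ inside a single index class to be linked by stable holonomies and hence to feed the same physical measure.

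The principal obstacle is Proposition \ref{DAz} itself: the DA-type surgery must destroy expansion along $\widetilde E^c$ at $0$ while simultaneously preserving a global dominated splitting (so the partial hyperbolicity of $f_0$ persists under its construction) and retaining a non-trivial transitive hyperbolic set $\Lambda_0$ on which the former unstable direction still expands uniformly, for it is the continuation of $\TT^2 \times \Lambda_0 \times \{a_1\}$ inside $\mathcal{A}_f$ that carries the index-$2$ physical measure. A subsidiary difficulty is verifying ${\rm Leb}(B_2) > 0$ in a way robust under $C^1$ perturbations in $\U$: strong unstable arcs sweeping through $\mathcal{A}_f$ must intersect the stable foliation of the $\Lambda_0$-continuation in positive Lebesgue measure, which should follow from the Anosov transitivity of the first factor $A^2$ together with the absolute continuity of the stable foliation.
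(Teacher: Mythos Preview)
Your construction has a genuine gap: the set $B_2$ you define --- points whose forward orbits are trapped near the DA repeller piece $\TT^2 \times \Lambda_0 \times \{a_1\}$ inside the attractor $\mathcal{A}_f$ --- has Lebesgue measure zero, not positive. Within $\mathcal{A}_f$, the set $\TT^2 \times \Lambda_0$ is a \emph{saddle}-type hyperbolic set: $E_1^c$ is uniformly expanding on it, so typical points of $\mathcal{A}_f$ are pushed away along $E_1^c$ toward the sink $\TT^2 \times \{0\}$. Your heuristic that ``strong unstable arcs intersect the stable foliation of the $\Lambda_0$-continuation in positive Lebesgue measure'' fails because the stable set of $\Lambda_0$ inside $\mathcal{A}_f$ (tangent to $E_2^c \oplus E^s$) is a lamination over a zero-measure Cantor set; absolute continuity of the stable holonomy only transports sets of positive leaf-measure to other leaves, it does not manufacture positive transverse measure out of a null base. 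The paper in fact proves exactly this negative statement as Lemma \ref{tupi}: the set $Z_f$ of points in the attracting slab whose $\omega$-limit lies in the DA saddle piece has zero Lebesgue measure, by combining \cite{ABV00} with the observation that every unstable manifold through $\Lambda_0$ must leak into the sink basin.

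The paper circumvents the problem by taking a circle map $J$ with \emph{two} sinks rather than your $K$ with one. Near the first sink $b_1=0$ the DA surgery is performed, producing the index-$1$ attractor $\Lambda^1(f)$; near the second sink $b_3=1/2$ nothing is modified, so the invariant fibre $\TT^4 \times \{1/2\}$ remains an Anosov attractor $\Lambda^3(f)$ of unstable index $2$, whose basin is automatically an open set of positive Lebesgue measure. The DA saddle piece $\Lambda^2(f)$ never supports a physical measure --- Lemma \ref{tupi} is there precisely to rule it out. With only one sink, your $f_0$ would have a unique physical measure (of index $1$) and item \eqref{ptw} would fail.
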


\subsection{Construction of $\{G_k\}$}\label{Gk}

Let $J$ be a $C^{\infty}$ Morse-Smale diffeomorphism on $\mathbb{S}^1=[0,1]/\{0,1\}$ whose non-wandering set is formed by 
$$
b_1=0, \quad b_2=1/4,\quad b_3=1/2,\quad b_4=3/4,
$$
where $b_1,b_3$ are sinks and $b_2,b_4$ are sources. Assuming also that
\begin{enumerate}[(J1)]
\item \label{j1}
$$
\lambda^{-1}<\|DJ(x)\|<\lambda,\quad x\in \SS^1,
$$
\item\label{j2}
$$
\|DJ(x)\|<\frac{3}{2}\lambda^{-1},\quad x\in [b_i-\delta_0,b_i+\delta_0],\quad i=1,3;
$$
\item\label{j3}
$$
\|DJ(x)\|>\frac{2}{3}\lambda,\quad x\in [b_i-\delta_0,b_i+\delta_0],\quad i=2,4.
$$
\end{enumerate}
Here $\lambda=\lambda_0^N$ is the constant satisfying (H\ref{one})-(H\ref{three}).
%\section{Proof of Theorem \ref{mainone}}

Define $G=B\times J$ on $\TT^5$, which admits the partially hyperbolic splitting 
$$
T(\TT^5)={\mathcal{E}}^{uu}\oplus {\cE}^{u}\oplus T\mathbb{S}^1 \oplus {\cE}^{s} \oplus {\cE}^{ss}.
$$
Let us take  
$$
U_0= [-\delta_0, \delta_0]^5=\F^{uu}_{\delta_0}(0)\times \F^{u}_{\delta_0}(0)\times \SS^1_{\delta_0}(0) \times \F^s_{\delta_0}(0)\times \F_{\delta_0}^{ss}(0),
$$
where we will modify $G$ to generate a sequence of $C^{\infty}$ diffeomorphisms $\{G_k\}$. For any fixed $k\in \NN$, we construct $G_k$ in the following way:
\begin{itemize}
\smallskip
\item $G_k$ agrees with $G$ for points outside $U_0$, 
\smallskip
\item if $x=(x_1,x_2,x_3,x_4,x_5)\in U_0$,
put
$$
G_k(x)=\left(\lambda^2 x_1,Q_k(x),J(x_3),\lambda^{-1}x_4,\lambda^{-2}x_5\right),
$$
where 
$$
Q_k(x) = \varphi(kx_2)\cdot \varphi(x_3)\cdot \varphi(x_4) \cdot\left( \frac{1}{2}- \lambda\right)x_2 + \lambda x_2.
$$
\end{itemize}

By the construction of $Q_k$, the choice of $\lambda$ with the fact $R(x,y,z)\ge -C$ (recall (H\ref{two})), arguing similarly to the proof of Lemma~\ref{splitting},  we conclude the following result.

\begin{lemma}\label{splittingtwo}
We have the following result for every $Q_k$:
$$
\frac{\partial}{\partial x_2}Q_k(x)\in \left[\frac{1}{2},\frac{\lambda^2}{3}\right], \quad \forall~x\in U_0,\quad
\frac{\partial}{\partial x_2}Q_k(0)=\frac{1}{2}.
$$
$$
\frac{\partial }{\partial x_i}Q_k(x)=0~ \textrm{if}~x_i=0, \quad
 \lim_{k\to \infty}\sup_{x\in U_0}\left\{\frac{\partial }{\partial x_i}Q_k(x)\right\}=0,\quad i=3,4.
$$
%$$
%\frac{\partial}{\partial x_2}Q_k(x)=\lambda>1, \quad x\notin U_0.
%$$}
\end{lemma}

One can deduce the following result by construction.

\begin{lemma}\label{second}
For any $\varepsilon>0$, there exists $\widehat{k}_{\varepsilon}$ such that for any $k\ge \widehat{k}_{\varepsilon}$, $G_k$ is a $C^{\infty}$ diffeomorphism admitting the partially hyperbolic splitting 
$$
T(\mathbb{T}^5)=E^{u}\oplus_{\succ} E_1^c\oplus_{\succ} E_2^c\oplus_{\succ} E^{s}
$$
such that
\begin{enumerate}[(1)]
\item $E^u={\cE}^{uu}$ and is uniformly expanding;
\smallskip
\item $E^s\subset \mathscr{C}_{\varepsilon}({\cE}^{s}\oplus {\cE}^{ss}, {\cE}^{u}\oplus T\SS^1)$ and is uniformly contracting;
\item \label{E1nh}$E_1^c= {\cE}^u$, which satisfies 
$\|DF_k|_{E_1^c(x)}\|=\lambda>1$ if $x\notin U_0$ and equals to $1/2$ at $0\in \TT^5$;
\item $E_2^c\subset \mathscr{C}_{\varepsilon}(T\SS^1, {\cE}^u)$.
\end{enumerate}
\end{lemma}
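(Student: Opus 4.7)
The plan is to mirror the arguments of Lemmas \ref{Cinfinite} and \ref{first}, while exploiting a structural simplification: because $Q_k$ depends only on $(x_2,x_3,x_4)$, the derivative $DG_k$ has a particularly clean column structure. For the $C^\infty$ assertion, I would write $G_k = G\circ H_k$, where $H_k(x)=(x_1,\,Q_k(x)/\lambda,\,x_3,x_4,x_5)$ inside $U_0$ and $H_k=\id$ outside. Lemma \ref{splittingtwo} gives $\partial_{x_2}Q_k\ge 1/2>0$, so $H_k$ is strictly monotonic in the $x_2$-direction, and the vanishing of the cutoffs $\varphi(kx_2),\varphi(x_3),\varphi(x_4)$ outside a small region implies $H_k=\id$ there and glues smoothly, exactly as in Lemma \ref{Cinfinite}. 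Composing with the smooth diffeomorphism $G$ yields that $G_k\in\operatorname{Diff}^\infty(\TT^5)$.

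For the partially hyperbolic splitting, I would compute $DG_k$ in the decomposition $\cE^{uu}\oplus\cE^u\oplus T\SS^1\oplus \cE^s\oplus\cE^{ss}$ and observe that, because $Q_k$ is independent of $x_1$ and $x_5$, columns $1$, $2$ and $5$ contain only the diagonal entries $\lambda^2$, $\partial_{x_2}Q_k$ and $\lambda^{-2}$ respectively, while the off-diagonal entries $\partial_{x_3}Q_k$ and $\partial_{x_4}Q_k$ appear only in row $2$. Consequently $\cE^{uu}$, $\cE^u$ and $\cE^{ss}$ are \emph{exactly} $DG_k$-invariant, and I would set $E^u:=\cE^{uu}$ and $E_1^c:=\cE^u$. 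Items (1) and (3) then follow immediately: $E^u$ is uniformly expanded by $\lambda^2$, and $\|DG_k|_{E_1^c(x)}\|=|\partial_{x_2}Q_k(x)|$, which equals $\lambda$ outside $U_0$ and $1/2$ at $0$ by Lemma \ref{splittingtwo}.

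To produce $E^s$ and $E_2^c$, I would verify the diagonal domination chain
\[
\lambda^2 \;>\; \partial_{x_2}Q_k(x) \;>\; \|DJ(\pi_{\SS^1}(x))\| \;>\; \lambda^{-1} \;>\; \lambda^{-2},\qquad x\in\TT^5,
\]
using (J\ref{j1}) outside $U_0$ (where $\partial_{x_2}Q_k=\lambda$) and, inside $U_0$, combining Lemma \ref{splittingtwo} with (J\ref{j2}): since there $x_3\in[-\delta_0,\delta_0]$, one has $\|DJ\|<\tfrac{3}{2}\lambda^{-1}<\tfrac{1}{2}\le \partial_{x_2}Q_k$. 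Together with the uniform decay $\sup_{x\in U_0}|\partial_{x_i}Q_k(x)|\to 0$ as $k\to\infty$ for $i=3,4$ (Lemma \ref{splittingtwo}), the standard cone-field argument, carried out exactly as in Lemma \ref{first}, furnishes $\widehat{k}_\varepsilon$ such that for every $k\ge\widehat{k}_\varepsilon$, $DG_k^{-1}$ strictly preserves the cones $\mathscr{C}_\varepsilon(\cE^s\oplus\cE^{ss},\,\cE^u\oplus T\SS^1)$ and $\mathscr{C}_\varepsilon(T\SS^1,\,\cE^u)$. Extracting the maximal $DG_k$-invariant subbundles contained in these cones gives $E^s$ (uniformly contracted, essentially at rate $\lambda^{-1}$) and $E_2^c$, delivering (2) and (4). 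The only delicate step is the domination inside $U_0$, where one needs the ratio $\|DJ\|/\partial_{x_2}Q_k$ to stay bounded away from $1$; thanks to (J\ref{j2}) and (H\ref{one}) it is at most $3/\lambda\le 1/2$, so the cone iteration closes with margin and uniformly in $k$.
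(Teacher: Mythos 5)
Your proposal is correct and follows essentially the same route as the paper: the paper likewise deduces smoothness by the argument of Lemma \ref{Cinfinite} using $\partial_{x_2}Q_k\ge 1/2$, reads off the exact $DG_k$-invariance of $\cE^{uu}$ and $\cE^u$ from the matrix of $DG_k$ (so that $E^u=\cE^{uu}$, $E_1^c=\cE^u$), and obtains $E^s$ and $E_2^c$ by the cone-field argument of Lemma \ref{first} based on the same diagonal domination chain. Your verification of the domination inside $U_0$ via (J\ref{j2}) and Lemma \ref{splittingtwo} matches the paper's intended estimates.
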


\begin{proof}
We see from Lemma \ref{splittingtwo} that
$$
\frac{\partial}{\partial x_2}Q_k(x)\ge \frac{1}{2}>0,
$$
with the same argument as in the proof of Lemma \ref{Cinfinite}, one can prove that every $G_k$ is a $C^{\infty}$ diffeomorphism.

We have the expression of $DG_k(x)$ on $U_0$ as follows:
\begin{equation*}\label{exp}
\renewcommand{\arraystretch}{1.3}
\begin{pmatrix}
\lambda^2 & 0 & 0 & 0 & 0 \\
0 & \frac{\partial }{\partial x_2}Q_k(x) & \frac{\partial }{\partial x_3}Q_k(x) & \frac{\partial }{\partial x_4}Q_k(x) & 0 \\
0 & 0 & DJ(x_3) & 0 & 0 \\
0 & 0 & 0 & \lambda^{-1} & 0 \\
0 & 0 & 0 & 0 & \lambda^{-2}
\end{pmatrix}
\end{equation*}
Note also that $DG_k$ coincides with $DG$ outside $U_0$.
This implies the $DG_k$-invariance of ${\cE}^{uu}$, ${\cE}^{u}\oplus T\SS^1\oplus {\cE}^{s}\oplus {\cE}^{ss}$, and $\cE^u\oplus T\SS^1$. With the similar argument as in the proof of Lemma \ref{first},  one knows that for every $\varepsilon>0$, there exists $\widehat{k}_{\varepsilon}\in \NN$ so that for any $k\ge \widehat{k}_{\varepsilon}$,  
one can find $DF_k$-invariant sub-bundles $E^u={\cE}^{uu}$,  $E^s\subset \mathscr{C}_{\varepsilon}({\cE}^{s}\oplus {\cE}^{ss}, {\cE}^{u}\oplus T\SS^1)$, $E_1^c= {\cE}^u$ and $E_2^c\subset \mathscr{C}_{\varepsilon}(T\SS^1, {\cE}^u)$ to satisfy the desired property. 
%By applying Lemma \ref{splittingtwo}, we get the domination property on $U_0$ as follows:
%\begin{equation}\label{domination2}
%\lambda^{-1} < \| DJ(x_3) \| <3/2\lambda^{-1}<1/2\le \frac{\partial }{\partial x_2}Q_k(x) <\lambda^2/3< \lambda^2, 
%\end{equation}
%for every $x=(x_1,\cdots,x_5)\in U_0$. Then, using 
%$$
%\lim_{k\to \infty}\sup_{x\in U_0}\left\{\frac{\partial }{\partial x_i}P_k(x)\right\}=0,\quad i=3,4,
%$$
%provided in Lemma \ref{splittingtwo}, 
\end{proof}

\subsection{DA diffeomorphisms}

Recall that $A: \TT^2\to \TT^2$ is a linear Anosov diffeomorphism with eigenvalues $\lambda^{-1}<1<\lambda$, which admits a uniform hyperbolic splitting 
$T(\TT^2)=\cE^{u}\oplus \cE^s$. Denote by 
$$
V_0=\F_{\delta_0}^{u}(0)\times \F_{\delta_0}^s(0),
$$
where $\F_{\delta_0}^{u}(0)$ and $\F_{\delta_0}^{u}(0)$ are local unstable and stable manifolds of size $\delta_0$ at $0$, respectively. 

The following DA diffeomorphism is constructed by deforming $A$ on a neighborhood of the saddle point $0$, making it to be a sink. The proof follows similarly from the classical approach (see \cite[Section 17.2]{KH} or \cite[Section 7.8]{Robin}), we provide a proof in Appendix \ref{app} for completeness.

\begin{lemma}\label{DAz}
For each $k\in \NN$, let $g_k:\TT^2\to \TT^2$ be a diffeomorphism defined by
$$
g_k|_{\TT^2\setminus V_0}=A|_{\TT^2\setminus V_0},
$$
$$
g_k(u,v)=A(u,v)+\left(\varphi(ku)\cdot \varphi(v)\cdot \left(\frac{1}{2}-\lambda\right)u,0\right), ~\forall~(u,v)\in V_0.
$$
Then for any $\varepsilon>0$, there exists $\kappa_{\varepsilon}$ such that for any $k\ge \kappa_{\varepsilon}$, $g_k$ admits a partially hyperbolic splitting $T(\TT^2)=E^{cu}\oplus E^s$ for which, $E^s\subset \mathcal{C}_{\varepsilon}(\cE^s,\cE^u)$ is uniformly contracting and $E^{cu}=\cE^u$. Moreover, there exists an open neighborhood $V\subset V_0$ of $(0,0)\in \TT^2$ with the following properties: 
\begin{enumerate}[(1)]
\item \label{at} $g_k(\overline{V})\subset V$;
\smallskip
\item \label{convg} $\lim_{n\to +\infty}g_k^n(u,v)=(0,0)$ whenever $(u,v)\in V$;
\smallskip
\item \label{hyper} $\TT^2\setminus\bigcup_{n\in \NN}g_k^{-n}(V)$ is a compact invariant hyperbolic set.
\end{enumerate}
\end{lemma}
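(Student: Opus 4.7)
The plan is to follow the classical DA (derived-from-Anosov) construction as presented, e.g., in \cite{KH} or \cite{Robin}. First, in the chart $V_0$ I would write $g_k(u,v) = (P_k(u,v), \lambda^{-1}v)$ with $P_k(u,v) = \lambda u + \varphi(ku)\varphi(v)(1/2 - \lambda)u$, while $g_k = A$ outside $V_0$. Direct differentiation, using $H(v,ku) = \varphi(v)(\varphi(ku) + ku\varphi'(ku))$, gives $\partial_u P_k = H(v,ku)(1/2-\lambda) + \lambda$ and $\partial_v P_k = u\,\varphi(ku)\varphi'(v)(1/2-\lambda)$. The arguments of Lemma~\ref{splitting} then yield $\partial_u P_k \in [1/2, \lambda^2/3]$ with $\partial_u P_k(0,0) = 1/2$, together with $\sup_{V_0}|\partial_v P_k| \to 0$ as $k \to \infty$ (the factor $|u|\le \delta_0/(2k)$ provides the decay). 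Smoothness of $g_k$ then follows as in Lemma~\ref{Cinfinite}. Crucially, $Dg_k$ is upper-triangular in the $(\cE^u, \cE^s)$-decomposition on $V_0$ (and equals $A$ outside), so $\cE^u$ is $Dg_k$-invariant at every point. Taking $E^{cu} := \cE^u$, the cone field $\mathscr{C}_\varepsilon(\cE^s, \cE^u)$ becomes strictly $Dg_k^{-1}$-invariant once $k$ is large enough that $\sup|\partial_v P_k|$ is small, producing an invariant $E^s \subset \mathscr{C}_\varepsilon(\cE^s, \cE^u)$ uniformly contracted at a rate near $\lambda^{-1}$ and dominated by $E^{cu}$ (using $\partial_u P_k \ge 1/2 > \lambda^{-1}$ from (H\ref{one})).

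For the trapping neighborhood, I would set $V := (-\delta_0/(4k),\, \delta_0/(4k)) \times (-\delta_0/4,\, \delta_0/4) \subset V_0$, which lies inside the plateau where $\varphi(ku) = \varphi(v) = 1$. On $V$ the formula simplifies to $P_k(u,v) = u/2$, so $g_k|_V$ is the strict linear contraction $(u,v) \mapsto (u/2, \lambda^{-1}v)$. This immediately yields $g_k(\overline{V}) \subset V$ and $g_k^n(u,v) \to (0,0)$ for every $(u,v) \in V$, verifying items (\ref{at}) and (\ref{convg}).

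Set $\Lambda := \TT^2 \setminus \bigcup_{n\in\NN} g_k^{-n}(V)$. It is closed, and forward $g_k$-invariance is immediate; backward invariance follows from the trapping property $g_k(\overline{V}) \subset V$, since any $g_k^{-1}(x) \in V$ would force $x \in g_k(V) \subset V$, contradicting $x \in \Lambda$. Uniform contraction along $E^s$ is global, hence automatic on $\Lambda$. The main obstacle is proving uniform expansion of $E^{cu} = \cE^u$ along orbits in $\Lambda$: outside the perturbation support $B_k := \{(u,v) : |u| \le \delta_0/(2k),\, |v| \le \delta_0/2\}$ one has $\|Dg_k|_{\cE^u}\| = \lambda$, but inside $B_k$ the factor $\partial_u P_k$ can drop as low as $1/2$ (attained on the plateau). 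The plan for this step is the standard DA analysis: the one-dimensional restriction $\phi(u) := P_k(u,0) = u\,[\lambda - \varphi(ku)(\lambda - 1/2)]$ has $0$ as an attracting fixed point whose immediate basin is bounded by the pair of repelling fixed points $u = \pm \xi/k$, where $\xi \in (\delta_0/4, \delta_0/2)$ is the unique point with $\varphi(\xi) = (\lambda-1)/(\lambda-1/2)$, and the plateau $\{|u| < \delta_0/(4k)\}$ lies strictly inside $V$. Combined with the uniform $\lambda^{-1}$-contraction of the $v$-coordinate in $V_0$ (which after one iterate brings $|v|$ into the plateau $|v|<\delta_0/4$), any orbit in $\Lambda$ entering the ``bad region'' $\{\partial_u P_k < 1\} \cap B_k$ is forced either to fall into $V$, contradicting $x \in \Lambda$, or to exit $B_k$ across $|u| = \delta_0/(2k)$ within a uniformly bounded number of iterates. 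Plugging this sojourn-time bound into the telescoping product of $\cE^u$-expansion factors, the expansion $\lambda$ collected outside $B_k$ dominates the worst-case factor $1/2$ inside by (H\ref{one}), yielding a uniform exponential expansion rate along $\cE^u$ on $\Lambda$ and thus the desired hyperbolicity.
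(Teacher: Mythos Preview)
Your derivation of the splitting $E^{cu}=\cE^u$, $E^s$, and of items~(\ref{at}) and~(\ref{convg}) is fine; in fact your plateau choice of $V$ makes those steps cleaner than the paper's, where $V=(-a,a)\times(-\delta_0/4,\delta_0/4)$ is taken with $a$ just below the repelling fixed point $u_0=\xi/k$ of $\phi$. Since both choices lie in the basin of the sink and contain a neighborhood of $0$, the set $\Lambda=\TT^2\setminus\bigcup_n g_k^{-n}(V)$ is the same either way.

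The gap is in your argument for~(\ref{hyper}). Your sojourn-time claim --- that an orbit in $\Lambda$ entering the bad region must exit $B_k$ across $|u|=\delta_0/(2k)$ within a uniformly bounded number of iterates --- is false. The saddle $(u_0,0)$ lies inside $B_k$ (because $u_0<\delta_0/(2k)$), and every point of $\Lambda$ on its local stable manifold stays in $B_k$ for all forward time. Such orbits never collect the factor $\lambda$ from ``outside $B_k$'' that your telescoping product relies on; along them $\partial_u P_k$ is only bounded below by $\inf_{|u|\ge u_0}\partial_u L(u,0)$, which is $>1$ but bears no a~priori relation to the worst-case factor $1/2$ you need to compensate. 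So the product estimate as written does not close.

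The paper bypasses sojourn-time bounds entirely. With the larger $V$, it proves the one-step inclusion $g_k(L)\subset\overline{V}$ for the bad set $L:=\{\partial_u P_k\le 1\}$, via a symmetry/Lebesgue-measure argument on the horizontal slices $R_y=L\cap\{v=y\}$: since $\int_{R_y}\partial_u L(\cdot,y)\le\Leb(R_y)\le 2a$ and the image is symmetric about $u=0$, one gets $g_k(R_y)\subset[-a,a]$, while the $v$-coordinate contracts into $(-\delta_0/4,\delta_0/4)$. Combined with $g_k(\overline{V})\subset V$ this gives $L\subset g_k^{-2}(V)$, hence $\Lambda\cap L=\emptyset$; compactness of $\Lambda$ then yields $\inf_\Lambda\partial_u P_k>1$ and hyperbolicity follows immediately. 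With your smaller plateau $V$ the inclusion $g_k(L)\subset\overline{V}$ fails, so to keep your choice you would still need to prove that every point of $L$ falls into the basin of the sink --- which amounts to showing $|P_k(u,v)|<u_0$ for $(u,v)\in L$, i.e.\ essentially the paper's slice argument again.
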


\subsection{Filtration and structural stability}
To study the stability of the structure of the physical measures, we need the basic properties of filtration, one can see e.g. \cite{Smale} for more details.
Let $f$ be a homeomorphism on compact metric space $M$. 
We say that $\{\mathscr{N}^i\}_{0\le i\le\ell}$ is a \emph{filtration} of $f$ if 
\begin{itemize}
\item 
$
\mathscr{N}^i\subset \mathscr{N}^{i+1}, ~ 1\le i\le \ell-1, \mathscr{N}^{\ell} =M,~\mathscr{N}^0=\emptyset,
$
\smallskip
\item each $\mathscr{N}^i$ is a compact set, and $f(\mathscr{N}^i) \subset {\rm int}(\mathscr{N}^i)$.
\end{itemize}
%\end{definition}

One can check by definition that for each $1\le i\le \ell$, the set
$$
\Lambda^i(f):=\bigcap_{n\in\ZZ}f^n(\mathscr{N}^i\setminus \mathscr{N}^{i-1})
$$
is a compact invariant subset of $M$.

\begin{lemma}\label{feiyoudang1}
Let $\{\mathscr{N}^i\}_{0\le i\le \ell}$ be a filtration of $f$. Then there exists a \( C^0 \) neighborhood $\U_0$ of \( f \) such that for every homeomorphism \( \tilde{f} \in \U_0\),  \( \{\mathscr{N}^i\}_{0\le i\le \ell} \) forms a filtration of \( \tilde{f} \).
Furthermore, the non-wandering set $\Omega(\tilde{f})$ can be rewrite as a disjoint union of sets:
$$
\Omega(\tilde{f})=\bigcup_{i=1}^{\ell}\Omega^i(\tilde{f}),
$$
where
$$
\Omega^i(\tilde{f})=\Omega(\tilde{f})\cap (\mathscr{N}^i\setminus \mathscr{N}^{i-1})=\Omega(\tilde{f}) \cap \Lambda^i(\tilde{f}).
$$
%Moreover, for any open set $V^i$ containing $\Lambda^i(f)$, up to shrinking the \( C^0 \)-neighborhood of \( f \), we have $\Lambda^i(\tilde{f})\subset V^i$.
\end{lemma}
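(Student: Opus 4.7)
The argument splits naturally into two independent pieces: $C^0$-robustness of the filtration property, and the disjoint decomposition of $\Omega(\tilde f)$. Robustness is a straightforward compactness estimate. For each $i$, compactness of $f(\mathscr{N}^i)$ together with $f(\mathscr{N}^i)\subset\mathrm{int}(\mathscr{N}^i)$ forces $\delta_i:=d(f(\mathscr{N}^i),\,M\setminus\mathrm{int}(\mathscr{N}^i))>0$. Setting $\delta:=\min_{1\le i\le\ell}\delta_i$, any homeomorphism $\tilde f$ with $d_{C^0}(\tilde f,f)<\delta$ maps each $\mathscr{N}^i$ into $\mathrm{int}(\mathscr{N}^i)$ pointwise, so $\{\mathscr{N}^i\}$ remains a filtration of $\tilde f$. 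Taking $\U_0=\{\tilde f:d_{C^0}(\tilde f,f)<\delta\}$ handles the first assertion.

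For the decomposition, the nested sequence $\emptyset=\mathscr{N}^0\subsetneq\mathscr{N}^1\subsetneq\cdots\subsetneq\mathscr{N}^\ell=M$ partitions $M$ into the pieces $\mathscr{N}^i\setminus\mathscr{N}^{i-1}$, giving at once the disjoint union $\Omega(\tilde f)=\bigsqcup_{i=1}^\ell\,\Omega(\tilde f)\cap(\mathscr{N}^i\setminus\mathscr{N}^{i-1})$. The inclusion $\Omega(\tilde f)\cap\Lambda^i(\tilde f)\subset\Omega(\tilde f)\cap(\mathscr{N}^i\setminus\mathscr{N}^{i-1})$ is free from the definition of $\Lambda^i(\tilde f)$; the content is the reverse inclusion, namely that a non-wandering point lying in the $i$-th level has its entire orbit trapped in that level. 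The filtration property gives two of the four required one-sided invariance statements for free: forward invariance of $\mathscr{N}^i$ yields $\tilde f^n(x)\in\mathscr{N}^i$ for $n\ge 0$, and backward invariance of $M\setminus\mathscr{N}^{i-1}$ yields $\tilde f^{-n}(x)\notin\mathscr{N}^{i-1}$ for $n\ge 0$.

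The main obstacle is the remaining pair of statements — that forward iterates of $x$ never enter $\mathscr{N}^{i-1}$ and backward iterates of $x$ never leave $\mathscr{N}^i$ — both of which genuinely use non-wandering. I would isolate the following contrapositive key lemma: if $y\in\Omega(\tilde f)$ satisfies $y\notin\mathscr{N}^j$ and $\tilde f^{n_0}(y)\in\mathscr{N}^j$ for some $n_0\ge 1$, one derives a contradiction. Indeed, push one more step: $\tilde f^{n_0+1}(y)\in\tilde f(\mathscr{N}^j)\subset\mathrm{int}(\mathscr{N}^j)$, so choose an open $V\ni \tilde f^{n_0+1}(y)$ with $V\subset\mathrm{int}(\mathscr{N}^j)$, and set $U_y:=\tilde f^{-(n_0+1)}(V)\cap(M\setminus\mathscr{N}^j)$. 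Since $y\notin\mathscr{N}^j$, the set $U_y$ is an open neighborhood of $y$; for every $m\ge n_0+1$, forward invariance of $\mathscr{N}^j$ forces $\tilde f^m(U_y)\subset\mathscr{N}^j$, which is disjoint from $U_y$, so $y$ is wandering — contradiction. Applying this lemma with $j=i-1$ and $y=x$ rules out forward entry into $\mathscr{N}^{i-1}$; applying it with $j=i$ and $y=\tilde f^{-n_0}(x)\in\Omega(\tilde f)$ rules out backward exit from $\mathscr{N}^i$. Combined with the free invariance statements, this confirms $x\in\Lambda^i(\tilde f)$ and completes the identification $\Omega^i(\tilde f)=\Omega(\tilde f)\cap\Lambda^i(\tilde f)$.
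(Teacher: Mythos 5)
The paper never actually proves this lemma: it is quoted as a standard fact about filtrations with a pointer to Smale, so your write-up is effectively supplying the missing classical argument rather than competing with one in the text. Your overall strategy --- the compactness/distance estimate for $C^0$-robustness, the reduction of the decomposition to a single ``non-wandering points cannot transit into a lower filtration level'' lemma, and the two applications of that lemma (with $j=i-1$ forward in time, and with $j=i$ applied to $y=\tilde f^{-n_0}(x)$ using invariance of $\Omega(\tilde f)$) --- is exactly the standard one, and the first part and the bookkeeping around the partition $M=\bigsqcup_i(\mathscr{N}^i\setminus\mathscr{N}^{i-1})$ are correct.

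There is, however, a genuine gap at the last step of your key lemma. You construct $U_y=\tilde f^{-(n_0+1)}(V)\cap(M\setminus\mathscr{N}^j)$ and establish $\tilde f^m(U_y)\cap U_y=\emptyset$ only for $m\ge n_0+1$; nothing is said about $1\le m\le n_0$. Since $y\in\Omega(\tilde f)$ only guarantees that \emph{some} $n\ge 1$ satisfies $\tilde f^n(U_y)\cap U_y\ne\emptyset$, and that $n$ could perfectly well be $\le n_0$, the conclusion ``so $y$ is wandering'' does not follow from what you proved. Two standard repairs: (i) note first that $y$ cannot be periodic (a periodic point with $\tilde f^{n_0}(y)\in\mathscr{N}^j$ would lie in $\mathscr{N}^j$ by forward invariance of $\mathscr{N}^j$, contradicting $y\notin\mathscr{N}^j$), hence $\tilde f^m(y)\ne y$ for $1\le m\le n_0$, and then shrink $U_y$ using these finitely many separations so that $\tilde f^m(U_y)\cap U_y=\emptyset$ also for $1\le m\le n_0$; or (ii) invoke --- and prove, since it is not the definition --- the strengthened characterization that a non-wandering point has returns $\tilde f^n(U)\cap U\ne\emptyset$ for arbitrarily large $n$, whose proof requires the same periodic/non-periodic dichotomy. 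With either patch the argument closes and the identification $\Omega^i(\tilde f)=\Omega(\tilde f)\cap\Lambda^i(\tilde f)$ is complete.
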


We say that \(\Lambda\) is an {\it isolated hyperbolic set} of \(f\) if there exists a neighborhood \(U\) of \(\Lambda\) such that \(\Lambda = \bigcap_{i \in \mathbb{Z}} f^i(U)\), and the restriction of \(f\) to \(\Lambda\) is uniformly hyperbolic, where \(U\) is called an {\it isolating neighborhood} of \(\Lambda\).

\begin{lemma}\label{feiyoudang2}\cite[Theorem 4.23]{Wen}
Let $\Lambda$ be an isolated hyperbolic set of $f$ and $U$ is an isolating neighborhood of $\Lambda$. Then there exists a \( C^1 \) neighborhood $\U$ of \( f \) such that, for any \( \tilde{f}\in \U \), the maximal invariant set $\Lambda(\tilde{f})$ of $\tilde{f}$ in $U$ is isolated in $U$ and $f|_{\Lambda(\tilde{f})}$ is $C^0$-conjugate to $f|_{\Lambda}$ and the $C^0$ conjugacy  is uniformly close to the identity.
\end{lemma}

%Now we are ready to prove Theorem \ref{yizhi}.

\subsection{Proof of Theorem~\ref{yizhi}}

Let us take a $C^{\infty}$ diffeomorphism $f_0=G_k$ for some $k\ge \max\{\widehat{k}_1,\kappa_1\}$, satisfying Lemma \ref{second} and Lemma \ref{DAz}. 
It follows from Lemma \ref{second} that $f_0$ exhibits the partially hyperbolic splitting 
$$
T(\mathbb{T}^5)=E^{u}\oplus_{\succ} E_1^c\oplus_{\succ} E_2^c\oplus_{\succ} E^{s}
$$
with the listed properties there. By the $C^1$-robustness of partial hyperbolicity, it follows that one can find a $C^1$-neighborhood $\U$ of $f_0$ for which any $f\in \U$ admits the partially hyperbolic with the same type, thus $f\in \U$ exhibits properties \eqref{psr} and \eqref{noh}. 

Now we show the  remaining properties of Theorem \ref{yizhi}.
Let $\mathscr{N}^0=\emptyset$ and
$$
\mathscr{N}^1=\TT^2\times \overline{V} \times \left[-\frac{\delta_0}{4},\frac{\delta_0}{4}\right],
$$
$$
\mathscr{N}^2=\TT^2\times\TT^2 \times \left[-\frac{\delta_0}{4},\frac{\delta_0}{4}\right],
$$
$$
\mathscr{N}^3=\TT^2\times\TT^2 \times \left(\left[-\frac{\delta_0}{4},\frac{\delta_0}{4}\right]\cup \left[\frac{1}{2}-\frac{\delta_0}{4}, \frac{1}{2}+\frac{\delta_0}{4}\right]\right),
$$
$$
\mathscr{N}^4= \TT^2\times\TT^2 \times \left[-\frac{\delta_0}{4},\frac{1}{2}+\frac{\delta_0}{4}\right],
$$
$$
\mathscr{N}^5=\TT^2\times \TT^2\times \SS^1=\TT^5.
$$
One knows by construction that $\{\mathscr{N}^i\}_{1\le i \le 5}$ is a family of compact subsets with the nested property: $\mathscr{N}^i\subset \mathscr{N}^{i+1}$ for every $1\le i \le 4$. 
Since $b_1=0$ and $b_3=1/2$ are sinks of $J$, by construction of $f_0$, we conclude that $f_0(\mathscr{N}^i)\subset {\rm int}(\mathscr{N}^i)$ for $2\le i \le 4$. Now we show $f_0(\mathscr{N}^1)\subset {\rm int}(\mathscr{N}^1)$, which together with above demonstrate that $\{\mathscr{N}^i\}_{0\le i \le 5}$ is a filtration of $f_0$. To this end, we observe that 
$$
f_0(x,y,z)=\left(A^2(x),g(y),J(z)\right),\quad \forall~(x,y,z)\in \TT^2\times \TT^2 \times \left[-\frac{\delta_0}{4},\frac{\delta_0}{4}\right],
$$
where $g=A$ outside $V_0=\F_{\delta_0}^{u}(0)\times \F_{\delta_0}^s(0)$, and
$$
g(y)=g(y_1,y_2)=A(y_1,y_2)+\left(\varphi(ky_1)\cdot \varphi(y_2)\cdot \left(\frac{1}{2}-\lambda\right)y_1,0\right),\quad y\in V_0.
$$
By Lemma \ref{DAz}, there is an open neighborhood $V\subset V_0$ such that $g(\overline{V})\subset V$, which together with the contraction of $J$ on $[-\delta_0/4,\delta_0/4]$ ensures that $f_0(\mathscr{N}^1)\subset {\rm int}(\mathscr{N}^1)$.

From properties \eqref{at},\eqref{convg} and \eqref{hyper} in Lemma \ref{DAz}, it follows that
$$
\Lambda^1(f_0)=\TT^2\times\{0\}\times\{0\}
$$
and
$$
\Lambda^2(f_0)=\TT^2\times \left(\TT^2\setminus\bigcup_{n\in \NN}g^{-n}(V)\right)\times\{0\}
$$
are compact invariant hyperbolic set of $f_0$ with unstable indices $1$ and $2$, respectively. Moreover, by the construction of $f_0$, we obtain the following facts:
$$
\Lambda^3(f_0)=\TT^2\times\{1/2\}\times\TT^2, 
$$
$$
\Lambda^4(f_0)=\TT^2\times\{1/4\}\times\TT^2,
$$
$$
\Lambda^5(f_0)=\TT^2\times\{3/4\}\times\TT^2.
$$
they are compact invariant hyperbolic subsets of $f_0$ with unstable indices $2,3,3$, respectively. Observe also 
$$
{\rm Leb}\left(\Lambda^i(f_0)\right)=0, \quad 1\le i \le 5.
$$

For each $1\le i \le 5$, we have the fact 
$$
\Lambda^i(f_0):=\bigcap_{n\in\ZZ}f_0^n\left(\mathscr{N}^i\setminus \mathscr{N}^{i-1}\right)=\bigcap_{n\in\ZZ}f_0^n\left({\rm int }(\mathscr{N}^i)\setminus \mathscr{N}^{i-1}\right).
$$
Hence, $\Lambda^i(f_0)$ is an isolated hyperbolic set of $f_0$ and ${\rm int }(\mathscr{N}^i)\setminus \mathscr{N}^{i-1}$ is an isolating neighborhood of $\Lambda^i(f_0)$. As a result of Lemma~\ref{feiyoudang2}, up to shirinking $\U$, we get that for every $f\in \U$, $\Lambda^i(f)$ is a compact invariant hyperbolic subset of $f$, with the same unstable index to $\Lambda^i(f_0)$, for every $1\le i \le 5$. Moreover, as 
$$
\Omega(f)\subset \bigcup_{i=1}^5\Lambda^i(f)
$$
by Lemma \ref{feiyoudang1}, every invariant measure is supported on $\bigcup_{i=1}^5\Lambda^i(f)$. This implies that every invariant measure of $f\in \U$ is hyperbolic, verifying \eqref{hyu} of Theorem \ref{yizhi}.

\medskip

To show the remaining properties of Theorem \ref{yizhi}, we need the following observation.  
%This also implies that $\Omega^i(f)$ being hyperbolic  can be extended to a \( C^1 \) neighborhood of \( f \). 
%The determinant varies $C^1$ continuously with respect to  the diffeomorphism $f$. Thus, up to shrinking the $\U_0$,  we can guarantee that 
%$$
%{\rm Leb}\left(\bigcup_{i=1}^{5}\Lambda^i(f)\right)=0.
%$$ 
%This together with Lemma \ref{feiyoudang1} yields 
%$$
%{\rm Leb}\left(\Omega(f)\right)={\rm Leb}\left(\bigcup_{i=1}^5\Omega^i(f)\right)=0.
%$$

\begin{lemma}\label{tupi}
Shrinking $\U$ if necessary, for any $C^{1+}$ diffeomorphism $f\in \U$, there exists a subset $Z_f\subset \mathscr{N}^2$ with zero Lebesgue measure such that, $\omega(x,f)\subset\Lambda^1(f)$ for any $x\in \mathscr{N}^2\setminus Z_f$.
\end{lemma}

%\begin{lemma}\label{tupi}
%Shrinking $\U$ if necessary, for any $f\in \U$, for Lebesgue almost every point $x\in \TT^5$, any measure $\mu\in \mathscr{M}(x,f)$ is supported on $\Lambda^3(f)$. 
%\end{lemma}
We assume this for a while, and show how to complete the proof of this theorem from it. 
Define
$$
\K=\TT^2\times \TT^2\times \left[\frac{1}{4}-\delta_0,\frac{1}{4}+\delta_0\right],
$$
then $f_0^{-1}(\K)\subset {\rm int}(\K)$. Note also that $\Lambda^4(f_0)$ is a repeller that can be rewritten as
$$
\Lambda^4(f_0)=\bigcap_{n\in \NN}f_0^{-n}(\K).
$$
Because $f_0$ coincides with $A^2\times A \times J$ on $\K$, which implies
$$
|{\rm det} Df^{-1}_0(x)|=\|DJ^{-1}(x)\|\le \frac{3}{2\lambda}<1/4<1, \quad \forall~ x\in \K.
$$
Up to shrinking $\U$, we assume that for every $f\in \U$, it holds that 
\begin{itemize}
\item $f^{-1}(\K)\subset {\rm int}(\K)$ and $|{\rm det} Df^{-1}(x)|<1, \quad \forall~ x\in \K.$
\smallskip
\item $\Lambda^4(f)=\bigcap_{n\in \NN}f^{-n}(\K).$
\end{itemize}
As a result, ${\rm Leb}(\Lambda^4(f))=0$. With the same argument, one also gets 
$
{\rm Leb}(\Lambda^5(f))=0
$
for every $f\in \U$, reducing $\U$ if necessary.

\medskip
For every fixed $C^{1+}$ diffeomorphism $f\in \U$, we take 
$$
\Gamma_f=\TT^5\setminus \left(\Lambda^4(f)\cup\Lambda^5(f)\cup \left(\bigcup_{n\in \NN}f^{-n}(Z_f)\right)\right).
$$
By above result and Lemma \ref{tupi}, one knows that $\Gamma_f$ has full Lebesgue measure in $\TT^5$. 
Denote the forward limit set of a point $x$ under  $f$ by 
$$
\omega(x, f) = \Big\{ y \in \mathbb{T}^5 : \exists\, \{n_i\} \subset \mathbb{N} \text{ such that } \lim_{i \to +\infty} f^{n_i}(x) = y \Big\}.
$$
We claim that:

\begin{claim}\label{in2}
For any $x\in \Gamma_f$, either $\omega(x,f)\subset \Lambda^1(f)$ or $\omega(x,f)\subset \Lambda^3(f)$.
\end{claim}

\begin{proof}
Fix any $x\in \Gamma_f$.
Since $x\notin \Lambda^5(f)$, while
$$
\Lambda^5(f)=\bigcap_{n\in \NN}f^{-n}(\TT^5\setminus \mathscr{N}^4),
$$
there is $n_1\in \NN$ so that $x_1:=f^{n_1}(x)\in \mathscr{N}^4$. Note that 
$$
x_1\notin \Lambda^4(f)=\bigcap_{n\in \NN}f^{-n}(\mathscr{N}^4\setminus \mathscr{N}^3)
$$
as $x\notin \Lambda^4(f)$, there exists $n_2\in \NN$ such that 
$x_2:=f^{n_2}(x_1)\notin \mathscr{N}^4\setminus \mathscr{N}^3$. 
Observe that $x_2\in \mathscr{N}^4$ as $x_1\in \mathscr{N}^4$ and $f(\mathscr{N}^4)\subset {\rm int}(\mathscr{N}^4)$, thus $x_2\in \mathscr{N}^3$.
If $x_2\in \mathscr{N}^3\setminus \mathscr{N}^2$, as $\Lambda^3(f)$ is an attractor with the trapping region $\mathscr{N}^3\setminus \mathscr{N}^2$, we conclude that $\omega(x,f)=\omega(x_2,f)\subset \Lambda^3(f)$.
Otherwise, we have $x_2\in \mathscr{N}^2\setminus Z_f$ as $x_2=f^{n_1+n_2}(x)\notin Z_f$ by construction of $\Gamma_f$. By applying Lemma \ref{tupi}, $\omega(x,f)\subset \Lambda^1(f)$.
\end{proof}

For any $C^{1+}$ diffeomorphism $f$ in $\U$, denote
$$
B_1=\{x\in \Gamma_f: \omega(x,f)\subset \Lambda^1(f)\},\quad
B_2=\{x\in \Gamma_f: \omega(x,f)\subset \Lambda^3(f)\}.
$$
According to Claim \ref{in2}, the union of $B_1$ and $B_2$ exhibits full Lebesgue measure in $\TT^5$. Observe that $\mathscr{N}^2\setminus Z_f\subset B_1$ and $\mathscr{N}^3\setminus \mathscr{N}^2\subset B_2$, we know ${\rm Leb}(B_1)>0$ and ${\rm Leb}(B_2)>0$.
When $x\in B_1$, then any measure $\mu\in \mathscr{M}(x,f)$ is supported on $\Lambda^1(f)$. 
Since $f|_{\Lambda^1(f)}$ is topological conjugate to $f_0|_{\Lambda^1(f_0)}$, and $f_0$ is uniformly contracting along $E_1^c\oplus E_2^c$, $f$ is uniformly contracting along $E_1^c\oplus E_2^c$ as well. Consequently,
$$
\chi(z, E_1^c)<0,\quad \mu-\textrm{a.e.}~z.
$$
When $x\in B_2$, as $f_0$ is uniformly expanding along $E_1^c$ and uniformly contracting along $E_2^c$, it follows that for every $\mu\in \mathscr{M}(x,f)$, it holds that 
$$
\chi(z, E_1^c)>0>\chi(z,E_{2}^c),\quad \mu-\textrm{a.e.}~z.
$$
This completes the proof of Item \eqref{twofen}. 

\medskip
By \eqref{hyu} and \eqref{twofen}, using the result \cite[Theorem B]{MC25}(or \cite[Theorem 2]{Bur25}), we know that any $C^{1+}$ diffeomorphism $f\in \U$ admits finitely many hyperbolic ergodic physical measures, whose basins cover a set of full Lebesgue measure in $\TT^5$. 
Moreover, by the above claim these physical measures are supported either on $\Lambda^1(f)$ or on $\Lambda^3(f)$. In particular, $f_0$ has a unique physical measure ${\rm Leb}_{\TT^2\times \{0\}\times \{0\}}$ supported on $\Lambda^1(f_0)$, and a unique physical measure ${\rm Leb}_{\TT^2\times \TT^2 \times \{1/2\}}$ supported on $\Lambda^3(f_0)$.
By Lemma~\ref{feiyoudang2},  $f$ is topologically mixing in $\Lambda^1(f)$ and $\Lambda^3(f)$, just as \( f_0 \) is. It follows that there exists a unique physical measure $\mu_1$ supported on $\Lambda^1(f)$, and a unique physical measure $\mu_2$ supported on $\Lambda^3(f)$, with indices $1$ and $2$ respectively. 
This completes the proof of Item \eqref{ptw} of Theorem \ref{yizhi}.

Hence, we are left to show Lemma \ref{tupi}.

\begin{proof}[Proof of Lemma \ref{tupi}]
Write $R_g$ for
$
\TT^2\setminus\bigcup_{n\in \NN}g^{-n}(V).
$
Note first that $f_0=A^2\times g\times J$ on $\TT^2\times \TT^2\times [-\delta_0/4,\delta_0/4]$, which is uniformly hyperbolic on 
$$
\Lambda^2(f_0)=\TT^2\times R_g\times\{0\}.
$$
In particular, by Lemma \ref{DAz}, $g$ is uniformly hyperbolic on $R_g$, which is uniformly expanding along $E^{cu}=\cE^u$. Notice that as $E_1^c=\cE^u$ provided in Lemma \ref{second}, we have $E^{cu}=E_1^c=\cE^u$. 
Let \( W^u(x, g) \) denote the unstable manifold of \( x \in R_g\) w.r.t. \( g \), which is tangent to $\cE^u$ and contains its respective local unstable manifolds of a uniform size \( \varepsilon_0 \).
Then, the global unstable manifold of \( p = (x, y, 0) \in \Lambda^2(f_0) \), denoted by \( W^u(p, f_0) \), has the following form:  
\[
W^u(p, f_0) = \mathcal{F}^{uu}(x, A^2) \times W^u(y, g) \times \{0\},
\]  
where \( \mathcal{F}^{uu}(x, A^2) \)  denotes the unstable manifold of \( x \in \mathbb{T}^2 \) for the Anosov map \( A^2 \).

Let
$$
\mathcal{B}=\TT^2\times V \times \left(-\frac{\delta_0}{4}, \frac{\delta_0}{4}\right).
$$
We show now
\begin{equation}\label{inter}
W^u(p,f_0)\cap \mathcal{B}\neq \emptyset,\quad \forall~p\in \Lambda^2(f_0).
\end{equation}
This comes from the following fact:
$W^u(y,g)\cap V\neq \emptyset$ for every $y\in R_g$. Arguing by contradiction, assume there is $y_0\in R_g$ for which 
\begin{equation}\label{cap0}
W^u(y_0,g)\cap V=\emptyset.
\end{equation}
Define
$$
H_m=\TT^2\setminus\bigcup_{n=0}^{m}g^{-n}(V),\quad m\in \NN.
$$
We check by definition that each $H_m$ is a closed neighborhood of $R_g$, and satisfies $g^{-1}(H_m)=H_{m+1}\subset {\rm int}(H_m)$. Moreover, one can choose $N$ large enough such that $g$ keeps uniformly expanding along $E_1^c$ on $H_{N}$. Property \eqref{cap0} implies that $W^u(g^{-N}(y_0),g)\cap g^{-N}(V)=\emptyset$, together with the obervation that $H_N=\TT^2\setminus g^{-N}(V)$, this yields
\begin{equation}\label{hn}
W^u(y_N,g)\subset H_{N}, \quad \textrm{where}~y_N:=g^{-N}(y_0).
\end{equation}
As a result, 
$$
W^u(g^{-m}(y_N),g)=g^{-m}(W^u(y_N,g))\subset g^{-m}(H_N)\subset H_N, \quad \forall~m\in \NN.
$$
On the other hand, as the unstable manifold $\F^u(y_N,A)$ of Anosov map $A$ is dense in $\TT^2$, there is $S>0$ such that $\F_S^u(y_N,A)\cap H_N^c\neq \emptyset$. Using the fact $E_1^c=\cE^u$ and the uniform expansion of $g$ on $H_N$, we can find $m_0\in \NN$ to satisfy
$$
W^u(y_N,g)\supset g^{m_0}\left(W^u_{\varepsilon}(g^{-m_0}(y_N),g)\right)\supset \F_S^u(y_N,A).
$$
Thus, $W^u(y_N,g)\cap H_N^c\neq \emptyset$, which contradicts \eqref{hn} and we get \eqref{inter}.

By \eqref{inter} and the continuity of unstable manifolds w.r.t. points and diffeomorphisms, for every $p\in \Lambda^2(f_0)$, there exists an open neighborhood $W_p$ of $p$, $L_p>0$ and a $C^1$ open neighborhood $\U_p\subset \U$ of $f_0$ so that 
$$
W^u_{L_p}(q,f) \cap \mathcal{B} \neq \emptyset,\quad \forall~q\in W_p\cap \Lambda^2(f), \quad \forall~f\in \U_p.
$$
By compactness of $\Lambda^2(f_0)$, one can find an open covering $\{W_{p_i}: 1\le i \le m\}$ of $ \Lambda^2(f_0)$. Take $\V$ so that $\Lambda^2(f)\subset \cup_{1\le i \le m}W_{p_i}$ for every $f\in \V$. Hence, by taking $\widehat{\U}=\V \cap \U_{p_1}\cap \cdots \cap\U_{p_m}$, one has that
\begin{equation}\label{inout}
W^u(p,f) \cap \mathcal{B} \neq \emptyset,\quad \forall~p\in \Lambda^2(f), \quad \forall~f\in \widehat{\U}.
\end{equation}
To conclude, one gets that for every $f\in \widehat{\U}$, for every point in $\Lambda^2(f)$, its unstable manifold can not entirely contained in $\Lambda^2(f)$.
%$$
%W^u_{L_p}(q,f_0) \cap \mathcal{B}^c \neq \emptyset,\quad q\in W_p\cap \Lambda^2(f_0).
%$$ 
%and
%take $L=\max\{L_{p_i}: 1\le i \le m\}$. We thus get$$
%W^u_{L}(p,f_0) \cap \mathcal{B}^c \neq \emptyset,\quad \forall ~p\in \Lambda^2(f_0).
%$$
%By the continuity of stable manifold w.r.t. diffeomorphisms and Lemma \ref{feiyoudang2} one knows that up to shrinking $\U$, for any $f\in \U$,  $\Lambda^2(f)\subset \mathcal{B}$, and 
%\begin{equation}\label{inout}
%W_L^u(p,f)\cap \mathcal{B}^c \neq \emptyset,\quad \forall~ p\in \Lambda^2(f).
%\end{equation}
%In particular, 

\medskip
Given $f\in \widehat{\U}$ and $x\in \mathscr{N}^2$. Since $\mathscr{N}^2$ is positively invariant under $f$ and the forward limit set is contained in the non-wandering set, we know $\omega(x,f)\subset \Omega(f)\cap \mathscr{N}^2$. Using Lemma \ref{feiyoudang1}, we obtain
$\omega(x,f)\subset \Lambda^1(f)\cup \Lambda^2(f)$. Observe that $\omega(x,f)$ cannot be written as a union of disjoint compact invariant sets, it follows that
\begin{itemize}
\item either, $\omega(x,f)\subset\Lambda^1(f)$,
\smallskip
\item or, $\omega(x,f)\subset\Lambda^2(f)$.
\end{itemize}
Therefore, to conclude the result, it suffices to show the following set
$$
Z_f:=\{x\in \mathscr{N}^2: \omega(x,f)\subset \Lambda^2(f)\}
$$
admits zero Lebesgue measure. 

Assume by contradiction that ${\rm Leb}(Z_f)>0$.
%We claim that every point in $Z_f$ is non-uniformly expanding along $E^{cu}:=E^u\oplus E_1^c$. Indeed, 
Since $f$ is uniformly expanding along $E^{cu}=E^u\oplus E_1^c$ for points in $\Lambda^2(f)$, there exists a constant \( \alpha > 0 \) such that  
\[
\|Df^{-1}|_{E^{cu}(y)}\| \le e^{-\alpha}<1,
\quad \forall~y\in \Lambda^2(f).
\]  
For any \( x \in Z_f \), since \( \lim_{n \to +\infty} d(f^n(x), \omega(x,f)) = 0 \) and \( \omega(x,f) \subset \Lambda^2(f) \), using the uniform continuity of $\mathscr{N}^2\ni y\mapsto \log \|Df^{-1}|_{E^{cu}(y)}\|$, one gets
\begin{equation}\label{nuab}
\limsup_{n \to +\infty} \frac{1}{n} \sum_{i=1}^{n} \log \|Df^{-1}|_{E^{cu}(f^i(x))}\| \le -\alpha<0.
\end{equation}
This means that every point in the positive Lebesgue measure subset $Z_f$ is non-uniformly expanding along $E^{cu}$. 
Noting also that $Z_f$ is contained in the compact and positively invariant subset $\mathscr{N}^2$. 
By applying \cite[Theorem A]{ABV00}, one can find an ergodic hyperbolic SRB measure $\nu$ for $f$ with the following properties:
\begin{itemize}
\item $Z_f\cap B(\nu,f)$ admits positive Lebesgue measure,
\smallskip
\item the conditional measures of $\nu$ along Pesin unstable manifolds tangent to \( E^{cu} \) are equivalent to the Lebesgue measure.
\end{itemize} 
The first property guarantees that one can take a point $x_0$ in $Z_f\cap B(\nu,f)$. Observe first that $x_0\in B(\nu,f)$ implies ${\rm supp}(\nu)\subset \omega(x_0,f)$. On the other hand, from $x_0\in Z_f$ and the definition of $Z_f$, we have $\omega(x_0,f)\subset \Lambda^2(f)$. Therefore, ${\rm supp}(\nu)\subset \Lambda^2(f)$.
The second property implies that \( \nu \) is \( u \)-saturated; that is, almost every unstable manifold of dimension \( 2 \) is contained in the support of \( \nu \)(see e.g. \cite[Corollary 2.1]{cv}). As a consequence,
there exists an unstable manifold \( W^u(p, f) \) entirely contained in the support of \( \mu \) for some $p\in \Lambda^2(f)$, and hence included in \( \Lambda^2(f) \). This contradicts \eqref{inout}, since \( \Lambda^2(f) \subset \mathcal{B} \).

\end{proof}

\section{Complete the proof of Theorem \ref{maintwo}: the general case}\label{mg}
A detailed construction of case $m=2$ has been provided in Section \ref{twop}. In what follows, first we show how to construct the examples of case $m=3$ from examples of the case $m=2$, the general case then follows by induction, applying the same method.

\begin{proof}[Complete the proof of Theorem \ref{maintwo}]
By Theorem \ref{yizhi}, in the special case $m=2$, we have constructed the partially hyperbolic system $f_0=G_k\in {\rm Diff}^{\infty}(\TT^5)$, which admits two hyperbolic ergodic physical measures with unstable indices $1$ and $2$. According to its construction, one knows that $f_0$ can be viewed as the product of the system \( (A^2, \mathbb{T}^2) \) and the skew-product system $(I, \SS^1\times \TT^2)$ defined as
$$
I(x,y)=\left(J(x), A_x(y)\right), \quad (x,y)\in \SS^1\times \TT^2,
$$
where $J$ is the Morse-Smale diffeomorphism on $\SS^1$ given in Subsection \ref{Gk}. 

 Notice that the product system $(A^3\times A^2 \times I, \TT^2\times \TT^2\times \SS^1\times \TT^2)$ admits two physical measures with unstable indices $2$ and $3$.  As a first step, we modify $A^3 \times A^2 \times I$ on $\mathbb{T}^2 \times \mathbb{T}^2 \times \mathbb{T}^2 \times [-\delta_0, \delta_0]$ so that the resulting system admits only one physical measure on $\mathbb{T}^2 \times \mathbb{T}^2 \times \mathbb{T}^2 \times [-\delta_0, \delta_0]$, whose unstable index is one.
 To this end, we present the following construction to realize this.

%\( (S^1 \times \mathbb{T}^2, I(x, y) = (J(x), A_x(y))) \).  
%Motivated by this, a natural approach to constructing a system that admits physical measures with three distinct unstable indices is to consider a suitably modified product of the system \( (A^3, \mathbb{T}^2) \) with the system \( (A^2 \times I, \mathbb{T}^2 \times \SS^1 \times \TT^2 ) \).   
%When necessary, one may further adjust the latter system by adding suitable sinks and sources to the circle dynamics governed by \( J \), in order to enrich the structure of the resulting physical measures. 

%We see from the construction that, without further modification, the system \( A^3 \times A^2 \times I \) admits physical measures with unstable indices equal to 2 and 3. Therefore, to obtain a system with physical measures of three distinct unstable indices, it remains only to construct one with unstable index equal to 1. To this end, we alter the expansion behavior of the leaf spaces corresponding to the two weaker sub-bundles—turning their expansion into contraction. We now present the following construction to realize this.

Consider the product map \( g_k \times A^2 \) on \( \mathbb{T}^2 \times \mathbb{T}^2 \), where \( g_k : \mathbb{T}^2 \to \mathbb{T}^2 \) is a diffeomorphism given in Lemma \ref{DAz}.
%defined as follows:
%\[
%g_k(u, v) =
%\begin{cases}
%A(u, v), & \text{for } (u, v) \in \mathbb{T}^2 \setminus V_0, \\
%A(u, v) + \left( \varphi(k u) \cdot \varphi(v) \cdot \left( \frac{1}{2} - \lambda \right) u, 0 \right), & \text{for } (u, v) \in V_0,
%\end{cases}
%\]
One can check by definition that on the rectangle 
$$ \left[ -\dfrac{\delta_0}{4k}, \dfrac{\delta_0}{4k} \right] \times \left[ -\dfrac{\delta_0}{4}, \dfrac{\delta_0}{4} \right],
$$
the map \( g_k(u, v) \) coincides with the linear transformation  
\[
g_k(u, v) = \left( \frac{1}{2}u, \frac{1}{\lambda}v \right).
\]
Thus, we may assume that on the rectangle  
\[
\left[ -\frac{\delta_0}{4k}, \frac{\delta_0}{4k} \right]^4,
\]  
the map \( g_k \times A^2 \) coincides with the linear transformation  
\[
\left(g_k \times A^2\right)(x_1, x_2, x_3, x_4) = \left( \frac{1}{2}x_1, \frac{1}{\lambda}x_2, \lambda^2 x_3, \frac{1}{\lambda^2}x_4 \right).
\]
We begin by modifying $g_k \times A^2$ to construct a sequence of $C^{\infty}$ diffeomorphisms. Let us construct $H_k$ in the following way:
\begin{itemize}
\smallskip
\item $H_k$ agrees with $g_k \times A^2$ for points outside $\left(-\dfrac{\delta_0}{4k}, \dfrac{\delta_0}{4k} \right)^4$, 
\smallskip
\item if $x=(x_1,x_2,x_3,x_4)\in \left(-\dfrac{\delta_0}{4k}, \dfrac{\delta_0}{4k} \right)^4$,
put
$$
H_k(x)=\left( \frac{1}{2}x_1, \frac{1}{\lambda}x_2, R_k(x)+\lambda^2 x_3, \frac{1}{\lambda^2}x_4 \right),
$$
where 
$$
R_k(x) = \varphi(k^2x_3)\cdot \varphi\left(k^2\sqrt{x_1^2+x_2^2+x_4^2}\right) \cdot\left( \frac{3}{4}- \lambda^2\right)x_3.
$$
\end{itemize}

Observe that \( R_k(x)+\lambda^2 x_3\) differs from the original map in the domain 
$$ 
\left[ -\frac{\delta_0}{2k^2}, \frac{\delta_0}{2k^2} \right]^4,
$$ 
where the derivative of the map with respect to \( x_3 \) gradually changes from a minimum value of \( 3/4 \) to $\lambda_2$.
With the argument as in Lemma \ref{first}, one sees that for sufficiently large \( k \), $H_k$ admits a dominated splitting into four one-dimensional invariant sub-bundles. Thus, on the invariant set \( \{0\} \times \mathbb{T}^2 \), the map \( H_k \) resembles the system \( (\mathbb{T}^2, g_k) \), exhibiting both a repeller and a fixed sink.  Furthermore, on the set $\left[ -\frac{\delta_0}{2k^2}, \frac{\delta_0}{2k^2} \right]^2 \times \mathbb{T}^2$, the map $H_k$ is  similar to the system $([-\frac{1}{4}\delta_0,\frac{1}{4}\delta_0] \times \mathbb{T}^2, J \times g_k)$.  Building upon this structure, we now define the map via the following construction on $\TT^2\times\TT^2\times[-\delta_0,\delta_0]$:
$x=(x_1,x_2,x_3,x_4,x_5)\in \TT^2\times\TT^2\times[-\delta_0,\delta_0]$,
put
$$
A_k(x)=\left(\varphi(x_5)Q_k(x)+\lambda x_1,\lambda^{-1}x_2,\varphi(2x_5)\widehat{R}_k(x)+\lambda^2 x_3,\lambda^{-2}x_4,J(x_5)\right),
$$
where 
$$
Q_k(x) = \varphi(kx_1)\cdot \varphi(x_2) \cdot\left( \frac{1}{2}- \lambda\right)x_1,
$$
and 
$$
\widehat{R}_k(x)=R_k(x_1,x_2,x_3,x_4).
$$
Thus, it is easy to verify that when \( x \in \mathbb{T}^2 \times \mathbb{T}^2 \times \left[ -\dfrac{\delta_0}{8}, \dfrac{\delta_0}{8} \right] \), we have  
\[
A_k = H_k \times J.
\]
Hence, we can verify that the product map \( A^3 \times A_k \) admits a physical measure whose support is $\mathbb{T}^2 \times \{0\}$ and whose unstable index is equal to one.

We next construct a system on $\mathbb{T}^2 \times \mathbb{T}^2 \times \mathbb{T}^2 \times \mathbb{S}^1$ that admits physical measures with three distinct unstable indices.
Observe that the map $A^3 \times A^2 \times I$, defined on $\mathbb{T}^2 \times \mathbb{T}^2 \times \mathbb{T}^2 \times [-\delta_0, \delta_0]$, is modified to $A^3 \times A_k$, with the modification supported strictly inside $\mathbb{T}^2 \times \mathbb{T}^2 \times \mathbb{T}^2 \times \left( -\frac{1}{4} + 2\delta_0, \frac{1}{4} - 2\delta_0 \right)$. We then modify $J$ to obtain $J_6$ by adding a sink $a$ and a source $b$ to the circle, satisfying properties (J\ref{j1}), (J\ref{j2}), and (J\ref{j3}). Consequently, $J_6$ has three sinks $a_1, a_2, a_3$ and three sources $b_1, b_2, b_3$,  arranged in counterclockwise order on the circle as $a_1, b_1, a_2, b_2, a_3, b_3$. Following the counterclockwise orientation of the circle, we construct a partially hyperbolic diffeomorphism $f_0 \in \mathrm{Diff}^{\infty}(\mathbb{T}^7)$ such that 
$$
f_0 = A^3 \times A_k \quad \text{on } \mathbb{T}^2 \times \mathbb{T}^2 \times \mathbb{T}^2 \times [b_3, b_1],
$$
where the point $a_1$ is identified with $0 \in S^1$ in the $A^3 \times A_k$ component,
and
$$
f_0 = A^3 \times A^2 \times I \quad \text{on } \mathbb{T}^2 \times \mathbb{T}^2 \times \mathbb{T}^2 \times [b_1, b_3],
$$
where the points $a_2$ and $a_3$ are identified with $0$ and $\frac{1}{2} \in S^1$, respectively, in the $A^3 \times A^2 \times I$ component.
Since $A^3 \times A^2 \times I$ and $A^3 \times A_k$ coincide in a neighborhood of each slice $\mathbb{T}^2 \times \mathbb{T}^2 \times \mathbb{T}^2 \times \{b_i\}$, the map $f_0$ is well-defined and smooth, where each $b_i$ is a source of $J_6$.
Following the same argument in the proof of Theorem \ref{yizhi}, one can obtain an $C^1$ open neighborhood $\U$ of $f_0$ demonstrating Theorem \ref{maintwo} for $m=3$. By inductively applying this approach, we establish Theorem \ref{maintwo} in full generality.
\end{proof}

\begin{remark}
For the example of physical measures with an arbitrary number of distinct unstable indices, we adopted a recursive construction method, unraveling a more intricate structure with each step. The fundamental strategy mirrors that used for the case with two distinct unstable indices. During the recursive process, the corresponding elements from previous steps can be recovered and maintained. As a result, physical measures with the given number of distinct unstable indices naturally emerge.
\end{remark}

\appendix
\section{Proof of Proposition \ref{DAz}}\label{app}

\begin{proof}[Proof of Proposition \ref{DAz}]
By Lemma~\ref{second}, \( \kappa_{\varepsilon} \) can be chosen to be \( \widehat{k}_{\varepsilon} \), which satisfies the conditions for the partially hyperbolic requirements.
Fix any \( k\ge\kappa_{\varepsilon}  \), let
$$
L(u,v)=\varphi(ku)\cdot \varphi(v)\cdot \left(\frac{1}{2}-\lambda\right)u+\lambda u,\quad \forall~ (u,v)\in  V_0.
$$
Thus,
$$
g(u,v)=(L(u,v),\lambda^{-1}v), \quad \forall~(u,v)\in V_0.
$$
On $V_0$ we have the expression  
\begin{equation*}\label{exp}
\renewcommand{\arraystretch}{1.3}
Dg(u,v)=
\begin{pmatrix}
\frac{\partial }{\partial u}L(u,v) & \frac{\partial }{\partial v}L(u,v)\\
0 & \lambda^{-1}
\end{pmatrix},
\end{equation*}
where 
$$
\frac{\partial }{\partial u}L(u,v)=\lambda+\left(\frac{1}{2}-\lambda\right)\cdot \varphi(v)\cdot(ku\varphi'(ku)+\varphi(ku)),
$$
and 
$$
\frac{\partial }{\partial v}L(u,v)=\varphi(ku)\varphi'(v)\left(\frac{1}{2}-\lambda\right)\cdot u.
$$
By a simple computation, one knows that on $V_0$ there exists a sink $(0,0)$ and two saddle points 
$(\pm u_0, 0)$ within
$$
\varphi(ku_0)=\frac{\lambda-1}{\lambda-\frac{1}{2}}, \quad \frac{\delta_0}{4k}<u_0<\frac{\delta_0}{2k}.
$$

Since 
$$
\frac{\partial }{\partial u}L(u_0,0)=\lambda+\left(\frac{1}{2}-\lambda\right)\cdot(ku_0\varphi'(ku_0)+\varphi(ku_0))>1,
$$
we can also assume that there is a neighborhood $B_r(u_0)$ of $u_0$ such that for each $x\in B_r(u_0)$, $v\in[-\frac{\delta_0}{2},\frac{\delta_0}{2}]$
\begin{align*}
\begin{split}
\frac{\partial }{\partial u}L(x,v)&=\lambda+\left(\frac{1}{2}-\lambda\right)\cdot \varphi(v)\cdot(kx\varphi'(kx)+\varphi(kx))\\
&\ge\lambda+\left(\frac{1}{2}-\lambda\right)\cdot(kx\varphi'(kx)+\varphi(kx))\\
&>1.
\end{split}
\end{align*}
(Without loss of generality, we can assume \( kx\varphi'(kx)+\varphi(kx) > 0 \), since when \( kx\varphi'(kx)+\varphi(kx) \leq 0 \), the inequality greater than 1 follows easily.)

Take a constant \( a \in (0, u_0) \cap B_r(u_0) \), then  
\[
\frac{\partial}{\partial u} L(a, v) > 1
\]
for all \( v \in \left( -\frac{\delta_0}{4}, \frac{\delta_0}{4} \right) \).  
Define the set  
\[
V := (-a, a) \times \left( -\frac{\delta_0}{4}, \frac{\delta_0}{4} \right).
\]
Since the map \( L(\cdot, \frac{\delta_0}{4}) : [-u_0, u_0] \to [-u_0, u_0] \) is a Morse-Smale diffeomorphism, it follows that \( g(\overline{V}) \subset V \). Moreover, for every point \( y \in V \), we have  
\[
\lim_{n \to +\infty} g^n(y) = (0,0).
\]

We can check that for $x>u_0$,  $\varphi(kx)<\frac{1-\lambda}{\frac{1}{2}-\lambda}$. It follows that for any $v\in[-\frac{\delta_0}{2},\frac{\delta_0}{2}]$, we have
\begin{align*}
\begin{split}
\frac{\partial }{\partial u}L(x,v)&=\lambda+\left(\frac{1}{2}-\lambda\right)\cdot \varphi(v)\cdot(kx\varphi'(kx)+\varphi(kx))\\
&\ge\lambda+\left(\frac{1}{2}-\lambda\right)\cdot(kx\varphi'(kx)+\varphi(kx))( \text{assume \( kx\varphi'(kx)+\varphi(kx) > 0\)})\\
&\ge \lambda+\left(\frac{1}{2}-\lambda\right)\cdot(\varphi(kx))\text{~~(recall $kx\varphi'(kx)\le0$)}\\
&>1. \text{~~(recall $\varphi(kx)<\frac{1-\lambda}{\frac{1}{2}-\lambda}$)}
\end{split}
\end{align*}
It is easy to obtain that when $|v|\ge \delta_0/2$, $\dfrac{\partial }{\partial u}L(x,v)=\lambda>1$. Thus, we have shown that, outside the region $[-a, a] \times \left[ -\delta_0/2, \delta_0/2 \right]$,  $\dfrac{\partial L}{\partial u}$ is strictly greater than 1.

 Let 
$$
R_y\times\{y\}:=\{(x,y)|\frac{\partial }{\partial u}L(x,y)\le1\}.
$$
$$
L:=\bigcup_{y\in\left[ -\frac{\delta_0}{2}, \frac{\delta_0}{2} \right]}R_y\times\{y\}.
$$
From the analysis above, the set \( L \) contains all points in the entire space \( \mathbb{T}^2 \) for which \( \frac{\partial}{\partial u} L(x, y) \le 1 \). Thus,  we conclude that the set  
\[
L \subset [-a, a] \times \left[ -\frac{\delta_0}{2}, \frac{\delta_0}{2} \right].
\]  
Combined with the fact that $g(\overline{V}) \subset V$, to prove that
$$
\mathbb{T}^2 \setminus \bigcup_{i=0}^{+\infty} g^{-i}(V)
$$
forms a compact, invariant, hyperbolic set, it suffices to show that $g(L) \subset \overline{V}$.

Denote by \( \mathrm{Leb} \) the Lebesgue measure on space \( [-a, a] \). We can identify \( R_y \times \{y\} \) with \( R_y \). Thus, for a well-defined set \( R_y \) (which is at least a measurable set), we have 
\[
\Leb(g(R_y)) = \int_{R_y} \frac{\partial }{\partial u}L(x,y) \, {\rm d}\Leb \le \Leb(R_y).
\]

By the symmetry of \( \phi(\cdot) \) and the fact that \( g \) maps each leaf tangent to the unstable direction \( \cE^u \) into another leaf also tangent to \( \cE^u \), we can conclude that within the set \( L \subset [-a, a] \times \left[-\delta_0/2, \delta_0/2\right] \), the set \( R_y \) is symmetric about the line \( x = 0 \). Consequently, the image of \( R_y \) under the map \( g \) remains symmetric about \( x = 0 \), and its Lebesgue measure does not exceed \( 2a \). Furthermore, since \( g \) uniformly contracts all points in the interval \( \left[-\delta_0/2, \delta_0/2\right] \) toward the origin, it follows that 
\[
g(L) \subset \overline{V}=[-a, a] \times \left[-\frac{\delta_0}{4}, \frac{\delta_0}{4}\right].
\]  
\end{proof}

\section{Conflict of Interest and Data Availability}
The authors declare that there are no conflicts of interest in the publication of this paper. No data was generated or analyzed during the study.

\vskip 5pt

\flushleft{\bf Zeya Mi} \\
\small School of Mathematics and Statistics, 
%and Jiangxi Provincial Center for Applied Mathematics, 
Nanjing University of Information Science and Technology, Nanjing, 210044, P.R. China\\
\textit{E-mail:} \texttt{mizeya@163.com}\\
\flushleft{\bf Hangyue Zhang} \\
\small School of Mathematical Sciences,  Nanjing University, Nanjing, 210093, P.R. China\\
\textit{E-mail:} \texttt{zhanghangyue@nju.edu.cn}\\
\end{sloppypar}

\begin{thebibliography}{XA}
%\bibitem[A]{A}
%\newblock V. Araujo,
%\newblock Finitely many physical measures for sectional-hyperbolic attracting sets and statistical stability.
%\newblock \emph{Ergodic Theory Dynam. Systems} {\bf 41} (2021), 2706--2733.
%
%\bibitem[AB]{AB}
%\newblock A. Avila and J. Bochi,
%\newblock Nonuniform hyperbolicity, global dominated splittings and generic properties of volume-preserving diffeomorphisms.
%\newblock \emph{Trans. Amer. Math. Soc.} {\bf 364} (2012), 2883--2907.
%
%\bibitem[ABW]{ABW}
%\newblock A. Avila, J. Bochi and A. Wilkinson,
%\newblock Nonuniform center bunching and the genericity of ergodicity among $\mathcal C^1$ partially hyperbolic symplectomorphisms.
%\newblock \emph{Ann. Sci. \'Ec. Norm. Sup\'er.} {\bf 42} (2009), 931--979.
%
%\bibitem[ACW]{ACW}
%\newblock A. Avila, S. Crovisier and A. Wilkinson,
%\newblock Diffeomorphisms with positive metric entropy.
%\newblock \emph{Publ. Math. Inst. Hautes \'Etudes Sci.} {\bf124}(2016), 319--347.
%
%\bibitem{ACW21}
%A. Avila, S. Crovisier and A. Wilkinson,
%$\mathcal C^1$ density of stable ergodicity,
%{\it Adv. Math.}, {\bf379}(2021), 107496.
%
%%\bibitem[APPV]{APPV}
%\newblock V. Araujo, M. Pacifico, E. Pujals and M. Viana, 
%\newblock Singular-hyperbolic attractors are chaotic. 
%\newblock \emph{Trans. Amer. Math. Soc.} {\bf361} (2009), 2431--2485.	
	
%\bibitem[ALM]{ALM}
%\newblock A.  Arbieto,  A. Lopez  and C. Morales,  
%\newblock Homoclinic classes for sectional-hyperbolic sets. 
%\newblock \emph{Kyoto J. Math.} {\bf 56} (2016), 531--538. 
%

%\bibitem{BP07} L. Barreira and Y. Pesin,
%Nonuniform hyperbolicity.
%{\it Encyclopedia of Mathematics and its applications}, {\bf 115}. Cambridge University Press, Cambridge, 2007.
%
%\bibitem{BoK37} N. N. Bogoliubov and N. M. Krylov, La theorie generale de la mesure dans son application a l'etude de systemes dynamiques de la mecanique non-lineaire, {\it Ann. Math.}, {\bf38}(1937), 65--113.
%
%%	
%%\bibitem[BC]{BC} 
%\newblock C. Bonatti and S. Crovisier,  
%\newblock R\'ecurrence et g\'en\'ericit\'e.  
%\newblock \emph{Invent. Math}. {\bf 158} (2004), 33--104.
 
% \bibitem[BdL]{BdL}
% \newblock C. Bonatti and A. da Luz, 
% \newblock Star flows and multisingular hyperbolicity. 
% \newblock  \textit{J. Eur. Math. Soc.} {\bf 23} (2021), 2649--2705.
 

%\bibitem[Bo]{Bowen-book}
%\newblock R. Bowen, 
%\newblock Equilibrium states and the ergodic theory of Anosov diffeomorphisms. 
%\newblock \emph{Lecture Notes in Mathematics} \textbf{470}. Springer-Verlag, Berlin-New York, 1975. 
%
%\bibitem[BoR]{BR}
%\newblock R. Bowen and D. Ruelle,
%\newblock The ergodic theory of Axiom A flows.
%\newblock \emph{Invent. Math.} {\bf 29} (1975), 181--202.
%

%\bibitem{Bar06}L. Barreira and C. Wolf,  Pointwise dimension and ergodic decompositions, \emph{Ergodic Theory Dynam. Systems} , {\bf 26}(2006), 653--671.


\bibitem{ABV00}
J. Alves, C. Bonatti and M. Viana, SRB measures for partially hyperbolic systems whose central direction is mostly expanding, {\it Invent. Math.}, {\bf140}(2000), 351-398.

%\bibitem{AP08}
%J. Alves, V. Pinheiro, Topological structure of (partially) hyperbolic sets with positive
%volume, {\it Trans. Amer. Math. Soc.}, {\bf360} (2008), 5551-5569.


%\bibitem{AAV}
%J. Alves, V. Araújo, C. Vásquez, Stochastic stability of non-uniformly hyperbolic diffeomorphisms, {\it Stoch. Dyn.},{\bf7} (2007), 299-333.
%\bibitem{ADSLP14}
%J. Alves, C. Dias S. Luzzatto and V. Pinheiro, SRB measures for partially hyperbolic systems whose central direction is weakly expanding : math.DS, arXiv:1403.2937

\bibitem{AD}
J. Alves, C. Dias S. Luzzatto and V. Pinheiro, SRB measures for partially hyperbolic systems whose central direction is weakly expanding, {\it J. Eur. Math. Soc.}, {\bf19} (2017), 2911-2946.

%\bibitem{AP10}
%J. Alves, V. Pinheiro, Gibbs-Markov structures and limit laws for partially hyperbolic attractors with mostly expanding central direction, {\it Advances in Mathematics}, {\bf223} (2010), 1706-1730.
%
%\bibitem{A03}
%J. Alves, Statistical Analysis of Non-Uniformly Expanding Dynamical Systems, {\it 24th Braz. Math. Colloq.}, IMPA, Rio De Janeiro, 2003.

\bibitem{AV15}
M. Andersson and C. Vasquez, On mostly expanding diffeomorphisms, {\it Ergodic Theory Dynam. Systems}, {\bf 38} (2018), 2838-2859.

%\bibitem{AV20}
%M. Andersson and C. Vasquez, Statistical stability of mostly expanding diffeomorphisms,
%{\it Ann. I. H. Poincaré – AN}, {\bf 37} (2020), 1245-1270.
%\bibitem{ab12}
%A. Avila and J. Bochi, Nonuniform hyperbolicity, global dominated splittings and generic
%properties of volume-preserving diffeomorphisms, {\it Trans. Amer. Math. Soc.}, {\bf364(6)}(2012),
%2883-2907.



\bibitem{BDV05}
C. Bonatti, L. D\'iaz and M. Viana, {\it Dynamics beyond uniform hyperbolicity: A global geometric and probabilistic perspective}, Springer-Verlag, Berlin, 2005. 

%\bibitem{bp02}
%L. Barreira, Y. Pesin, Lyapunov exponents and smooth ergodic theory, Univ. Lect. Ser. 23, American Mathematical Society, Providence RI, 2002.



%\bibitem{bdpp}
%K. Burns, D. Dolgopyat, Y. Pesin, M. Pollicott, Stable ergodicity for partially
%hyperbolic attractors with negative central exponents, {\it J. Mod. Dyn.}, (2008) 2: 63-81.

%\bibitem{BC91}
%M. Benedicks and L. Carleson, The dynamics of the H\'{e}non map. {\it Annals of Math.} (1991) 133:73-169.
%
%\bibitem{BY92}
%M. Benedicks, L.-S.Young, Absolutely continuous invariant measures and random
%perturbations for certain one-dimensional maps, {\it Ergod. Th.$\&$ Dynam. Sys.} (1992) 12: 13-37.
%
%\bibitem{BV}
%M. Benedicks, M. Viana, Solution of the basin problem for H\'{e}non-like attractors, {\it Invent. Math.} (2001) 143: 375-434.





%\bibitem{Bill}
%P. Billingsley, Convergence of Probability Measures, {\it John Wile $\&$ Sons, Inc.}, New York-London-Sydney, (1968).



\bibitem{Bow75}
R. Bowen, {\it Equilibrium states and the ergodic theory of Anosov diffeomorphisms}, Springer-Verlag, Berlin-New York, 1975.

\bibitem{BoR75}
R. Bowen, D. Ruelle, The ergodic theory of Axiom A fows, {\it Invent. Math.}, {\bf29} (1975), 181-202.

%\bibitem{bdv}
%C. Bonatti, L. Diaz and M. Viana, Dynamics beyond uniform hyperbolicity, A global geometric and probabilistic perspective, Encyclopaedia of Mathematical Sciences, 102. Mathematical Physics, 2005. III. Springer-Verlag, Berlin.



\bibitem{bv}
C. Bonatti, M. Viana, SRB measures for partially hyperbolic systems whose central direction is mostly contracting, {\it Israel J. Math.}, {\bf11} (2000), 157-193.

\bibitem{B02}
M. Brin, S. Garrett, {\it Introduction to Dynamical Systems}, Cambridge University Press, 2002.

\bibitem{Bur25}
D. Burguet, SRB measures for partially hyperbolic systems with one-dimensional center subbundles, {\it Trans. Amer. Math. Soc.}, DOI:http://doi.org/10.1090/tran/9513.

%\bibitem{bdpp}
%K. Burns, D. Dolgopyat, Y. Pesin, Pollicott M., Stable ergodicity for partially
%hyperbolic attractors with negative central exponents, J. Mod. Dyn. (2008) 2: 63-81.

%\bibitem{bp74}
%M. Brin, Y. Pesin, Partially hyperbolic dynamical systems, Izv. Akad, Nauk SSSR Ser. Mat. (1974) 38: 170-212.

%\bibitem{byu87}
%M. Brin, Y. Kifer, Dynamics of Markov chains and stable manifolds for random diffeomorphisms, {\it Ergod. Th. $\&$ Dynam. Sys.}, {\bf7}(1987), 351-374.


%\bibitem{cao03}
%Y. Cao, Non-zero Lyapunov exponents and uniform hyperbolicity, {\it Nonlinearity}, {\bf16}(2003), 1473-1479.

\bibitem{CMY22}
Y. Cao,  Z. Mi and D. Yang, On the abundance of Sinai-Ruelle-Bowen measures, {\it Commun. Math. Phys.}, {\bf 391} (2022), 1271-1306. 

\bibitem{MC23}
Y. Cao, Z. Mi and J. Yang, Physical measures for partially hyperbolic diffeomorphisms with mixed hyperbolicity, {\it Nonlinearity}, {\bf37} (2024), 075005 (16 pp).

%\bibitem{cly19}
%Y. Cao, G. Liao and Z. You, Upper bounds on measure theoretic tail entropy for dominated
%splittings, {\it Ergod Th.$\&$ Dynam. Sys.},
%DOI: 10.1017/etds.2019.10

\bibitem{car93}
M. Carvalho, Sinai-Ruelle-Bowen measures for N-dimensional derived from
Anosov diffeomorphisms, {\it Ergodic Theory Dynam. Systems}, {\bf 13} (1993), 21-44.

%\bibitem{ce11}
%E. Catsigeras and H. Enrich, SRB-like measures for $C^0$ dynamics. {\it Bull. Pol. Acad. Sci. Math.} {\bf59}
%(2011), 151-164.

%\bibitem{cce15}
%E. Catsigeras, M. Cerminara and H. Enrich, The Pesin entropy formula for $C^1$ diffeomorphisms with dominated splitting. {\it Ergod. Theory Dyn.
%Syst.}, {\bf35} (2015), 737-761.

%\bibitem{cdp13}
%V. Climenhaga, D. Dolgopyat and Y. Pesin, Non-stationary non-uniform hyperbolicity: SRB measures for dissipative maps: math.DS, arXiv:1405.6194



\bibitem{CY05}
W. Cowieson , L.-S. Young, SRB measures as zero-noise limits, {\it Ergodic Theory Dynam. Systems}, {\bf25(4)} (2005), 1115-1138.

%\bibitem{C11}
%S. Crovisier, Partial hyperbolicity far from homoclinic bifurcations, {\it Adv. Math.}, {\bf 226} (2011), 673-726.

%\bibitem{C15}
%S. Crovisier, E. Pujals, Essential hyperbolicity and homoclinic bifurcations: a dichotomy phenomenon/mechanism for diffeomorphisms, {\it Invent. Math.}, {\bf201} (2015), 385-517.

\bibitem{CYZ18}
S. Crovisier, D. Yang, J. Zhang, Empirical measures of partially hyperbolic attractors, {\it Comm. Math. Phys.}, {\bf 375} (2020), 725-764.

%\bibitem{CSY15}
%S. Crovisier, M. Sambarino and D. Yang, Partial hyperbolicity and homoclinic tangencies,  {\it J. Eur. Math. Soc.},{\bf 17} (2015), 1-49.

\bibitem{dvy16}
D. Dolgopyat, M. Viana and J. Yang, Geometric and measure-theoretical structures of maps
with mostly contracting center, {\it Comm. Math. Phys.}, {\bf 341} (2016), 991-1014.

%\bibitem{d00}
%D. Dolgopyat, On dynamics of mostly contracting difeomorphisms, Comm. in Math. Phys, 2000, 213: 181-201.

%\bibitem{dl}
%D. Dolgopyat, Limit Theorems for partially hyperbolic systems, Trans-actions of the AMS. (2004) 356: 1637-1689.

%\bibitem{hy}
%Hu Huyi., Young L.-S., Nonexistence of SBR measures for some systems that are ``almost  Anosov",  Ergod. Th. $\&$ Dyn. Sys. (1995) 15: 67-76.

%\bibitem{geometric}
%H. Federer, Geometric measure theory, Springer, 1969.
%\bibitem{geometric}
%H. Federer, Geometric measure theory, Springer, 1969.

%\bibitem{D12}
%L. J. D\'iaz, T. Fisher, M. J. Pacifico and J. L. Vieitez, Entropy-expansiveness for partially
%hyperbolic diffeomorphisms, {\it Discrete Contin. Dyn. Syst.}, {\bf 32} (2012),
%4195-4207

%\bibitem{G02}
%S. Gan. A generalized shadowing lemma. Discrete Contin. Dyn. Syst., 8 (2002), no. 3, 627-
%632.

%\bibitem{h11}
%Rodriguez Hertz, M. A. Rodriguez Hertz, A. Tahzibi, and R. Ures. New criteria for ergodicity
%and nonuniform hyperbolicity. {\it Duke Mathematical Journal}, 160 (2011): 599-629.
%
%\bibitem{HPS77}
%M. W. Hirsch, C. Pugh and M. Shub, Invariant manifolds, {\it Lecture Notes in Mathmatics},
%Springer-Verlag, Vol. {\bf 583} (1977).

\bibitem{HHW}
H. Hu, Y. Hua and W. Wu, Unstable Entropies and Variational Principle for Partially Hyperbolic Diffeomorphisms, {\it Adv. Math.}, {\bf 321} (2017), 31-68.

\bibitem{HYY19}
Y. Hua, F. Yang and J. Yang, A new criterion of physical measures for partially hyperbolic diffeomorphisms,
{\it Trans. Amer. Math. Soc.}, {\bf 373(1)} (2020), 385-417.

\bibitem{KH}
A. Katok, B. Hasselblatt, {\it Introduction to the Modern Theory of Dynamical Systems}, Cambridge University Press, Cambridge, 1995.

\bibitem{Kan}
I. Kan, Open sets of diffeomorphisms having two attractors, each with an everywhere dense basin, {\it Bull.
Amer. Math. Soc.}, {\bf 31(1)} (1994), 68-74.

%\bibitem{LVY}
%G. Liao, M. Viana and J. Yang, The entropy conjecture for diffeomorphisms away from tangencies, 
%{\it J. Eur. Math. Soc.}, {\bf15} (2012), 2043-2060.

%\bibitem{LLST17}
%C. Liao, G. Liao, W. Sun and X. Tian, Variational equalities of entropy in nonuniformly hyperbolic systems, {\it Trans. Amer. Math. Soc.}, {\bf369} (2017), 3127-3156.

%\bibitem{LS82}
%F. Ledrappier and J. Strelcyn, A proof of the estimation from below in Pesin?s entropy formula.
%{\it Ergodic Theory Dynam. Systems},  {\bf2} (1982), 203-219.
%
%
\bibitem{L84}
F. Ledrappier, Propri\'{e}t\'{e}s ergodiques des mesures de Sina\"{i}, {\it Inst. Hautes Études Sci. Publ. Math.}, {\bf 59} (1984), 163-188.

\bibitem{ly}
F. Ledrappier, L.-S. Young, The metric entropy of difeomorphisms Part I: Characterization of measures satisfying Pesin's entropy formula,
{\it Ann. Math.}, {\bf122} (1985), 509-539.

%\bibitem{lep04}
%R. Leplaideur, Existence of SRB measures for some topologically hyperbolic diffeomorphisms. {\it Ergod. Th. $\&$ Dynam. Sys.}, {\bf24(4)}(2004),1199-1225.
%
%\bibitem{L89}
%S. T. Liao. On $(\eta, d)$-contractible orbits of vector fields, {\it Systems Science and Mathematica
%Sciences},  {\bf2}(1989), 193-227.
%
%\bibitem{ND07}
%G. Nikolaz, Adapted metrics for dominated splittings, {\it Ergod. Th. $\&$ Dynam. Sys.}, {\bf27},
%(2007),1839-1849.
%%
%\bibitem{cly19}
%Y. Cao, G. Liao and Z. You, Upper bounds on measure theoretic tail entropy for dominated
%splittings, {\it Ergod Th.$\&$ Dynam. Sys.},
%DOI: 10.1017/etds.2019.10
%
%\bibitem{LiL15}
%P. Liu, K. Lu, A Note on partially hyperbolic attractors: entropy conjecture and SRB measures, {\it Discrete and Continuous Dynamical Systems}, {\bf35}(2015), 341-352.

%\bibitem{M78}
%R. Ma\~n\'e, Contributions to the stability conjecture, {\it Topology}, {\bf17} 1978, 383-396.

\bibitem{MCY17}
Z. Mi, Y. Cao and D. Yang, A note on partially hyperbolic systems with mostly expanding centers,
{\it Proc. Amer. Math. Soc.}, {\bf 145(12)} (2017), 5299-5313.

\bibitem{MC21}
Z. Mi, Y. Cao, Statistical stability for diffeomorphisms with mostly expanding
and mostly contracting centers, {\it Math. Z.}, {\bf 299} (2021), 2519-2560.

\bibitem{MC25}
Z. Mi, Y. Cao, Physical measures on partially hyperbolic diffeomorphisms with multi 1-D centers, arXiv:2310.00674.

\bibitem{MWC24}
Z. Mi, Q. Wang, Y. Cao, Physical measures under the hyperbolicity of
pseudo-physical measures, {\it J. Differential Equations}, {\bf 400} (2024), 230–247.

%\bibitem{MC23}
%Y. Cao, Z. Mi and J.Yang, Physical measures for partially hyperbolic diffeomorphisms with mixed hyperbolicity, {\it Nonlinearity}, {\bf37} (2024) 075005 (16pp).

%\bibitem{M18S}
%Z.Mi, Radnom entropy expansiveness for diffeomorphism with dominated splittings, preprint.

%\bibitem{M23}
%Z. Mi, Stochastic stability for partially hyperbolic diffeomorphisms with mostly expanding and mostly contracting centers, {\it Stoch. Dyn.}, {\bf 23(4)} (2023), 2350032 (18 pp)

%\bibitem{M18}
%Z.Mi, SRB measures for some diffeomorphisms with dominated splittings as zero noise limits, arxiv

%\bibitem{N74}
%S. Newhouse, Diffeomorphisms with infinitely many sinks, {\it Topology} {\bf13} (1974), 9–18.

%\bibitem{N79}
%S. Newhouse, The abundance of wild hyperbolic sets and nonsmooth stable sets for diffeomorphisms, {\it
%Inst. Hautes $\acute{E}$tudes Sci. Publ. Math.}, {\bf 50} (1979), 101-151.

\bibitem{Smale}
S. Smale, Differentiable dynamical systems, {\it Bull. Amer. Math. Soc.}, {\bf 73} (1967), 747–817.

\bibitem{O68}
V. I. Oseledets, A multiplicative ergodic theorem. Lyapunov characteristic
numbers for dynamical systems. {\it Trans. Mosc. Math. Soc.}, {\bf19} (1968), 197-231.

\bibitem{pa0}
J. Palis, A global view of Dynamics and a conjecture on the denseness of finitude of attractors, {\it Ast$\acute{e}$risque}, {\bf261}(1999), 339-351.

\bibitem{pa}
J. Palis, A Global perspective for non-conservative dynamics, {\it Ann. Inst. H. Poincar$\acute{e}$ Anal. Nonlin$\acute{e}$aire}, {\bf22} (2005), 485-507.

%\bibitem{pliss}
%V. Pliss, On a conjecture due to Smale, {\it Diff. Uravenenija}, {\bf 8} (1972), 262-268.


%\bibitem{Pes92}
%Y. Pesin, Dynamical systems with generalized hyperbolic attractors: hyperbolic, ergodic and topological properties, {\it Ergod Th.$\&$ Dynam. Sys.},{\bf12} (1992), 123-151.

%\bibitem{p77}
%Y. Pesin, Characteristic Lyapunov exponents and smooth ergodic theory, {\it Usp. mat. Nauk}, {\bf 32}(1977), 55-112,
%(1977); {\it English transl., Russ. Math. Surv.},{\bf 32}(1977), 55-114.

%\bibitem{ps82}
%Y. Pesin, Y. Sinai, Gibbs measures for partially hyperbolic attractors, {\it Ergod. Theory Dyn.
%Syst.},{\bf2} (1982), 417-438.

%\bibitem{ps72}
%C. Pugh, M. Shub, Ergodicity of Anosov actions. {\it Invent. Math.}, {\bf15}(1) (1972), 1-23.
%\bibitem{ps}
%C. Pugh, M. Shub, Ergodic attractors. Trans, AMS. (1989) 312: 1-54.

\bibitem{R62}
V. Rokhlin, On the fundamental ideas of measure theory. {\it A.M.S. Translations} {\bf10} (1962), 1-52.

\bibitem{Robin}
C. Robinson, {\it Dynamical Systems: Stability, Symbolic Dynamics, and Chaos}, CRC Press, 1995.

\bibitem{Rue76}
D. Ruelle, A measure associated with Axiom A attractors, {\it Amer. J. Math.}, {\bf98} (1976), 619-654.
%
%\bibitem{ru78}
%D.Ruelle, An inequality of the entropy of differentiable maps, {\it Bol. Sc. Bra. Math.}, {\bf9}(1978), 83-87.

\bibitem{Sin72}
Y. Sinai, Gibbs measures in ergodic theory, {\it Russ. Math.}, {\bf27(4)} (1972), 21-69.

%\bibitem{pe77}
%Ya.Sinai, Characteristic Lyapunov exponents and smooth ergodic theory, Russ. Math. Surv. 32(1977): 55-114.


\bibitem{cv}
C. V$\acute{a}$squez, Statistical stability for diffeomorphisms with dominated splitting, {\it Ergodic Theory Dynam. Systems}, {\bf27} (2007), 253-283.


%\bibitem{v09}
%C. V$\acute{a}$squez, Stable ergodicity for partially hyperbolic attractors with positive central Lyapunov exponents, JMD. (2009) 3(2): 233-251.


%\bibitem{v09}
%C. V$\acute{a}$squez, Stable ergodicity for partially hyperbolic attractors with positive central Lyapunov exponents, {\it J. Mod. Dyn.},  (2009) 3(2): 233-251.

\bibitem{v2}
M. Viana, Dynamics: a probabilistic and geometric perspective, {\it Doc. Math.}, Extra Vol. I (1998), 557-578.

\bibitem{Wen}
L. Wen, {\it Differentiable Dynamical Systems: An introduction to Structural Stability and Hyperbolicity}, American Mathematical Society, 2016.
%\bibitem{Xia}
%Z. Xia, Hyperbolic invariant sets with positive measures, {\it Discrete Contin. Dyn. Syst.}, {\bf 15(3)}(2006), 811-818.
%\bibitem{v1}
%%M. Viana., Lecture notes on attractors and physical measures, Course lectured at the IMCA, IMCA, Peru,1999.
%\bibitem{kifergodic}
%Y. Kifer, Ergodic theory of random transformations, Birkhauser (1986).
%
%\bibitem{kifperturbation}
%Y. Kifer, Random Perturbations of Dynamical Systems, Birkhauser, (1988).

%\bibitem{LJEMS}
% G. Liao, M. Viana and J. Yang, The entropy conjecture for diffeomorphisms away from
%tangencies, {\it Journal of the European Mathematical Society} {\bf 15(6)} (2013), 2043-2060.
%
%\bibitem{YZ22}
%D.Yang, Y. Zang, Volume Growth and Topological Entropy of Partially Hyperbolic Systems, {\it Israel J. Math.}, (2022), 1-23.

\bibitem{VO16}
M. Viana, K. Oliveira, {\it Foundations of Ergodic Theory}, Cambridge University Press, Cambridge, 2016.

\bibitem{VY13}
M. Viana, J. Yang, Physical measures and absolute continuity for one-dimensional center direction, {\it Ann.I.H.Poincare-AN}, {\bf 30} (2013), 845-877.

%
%\bibitem{y02}
%L.-S. Young, What are SRB measures, and which dynamical systems have them?, {\it J. Statist. Phys.}, {\bf108}(2002), 733-754.



\bibitem{yjg16}
J. Yang, Entropy along expanding foliations, {\it Adv.Math.}, {\bf 389} (2021), 107893 (39 pp).
% 
\bibitem{yjg19}
J. Yang, Geometrical and measure-theoretic structures of maps with mostly expanding center, {\it J. Inst. Math. Jussieu}, {\bf 22} (2023), 919–959.

\bibitem{wa82}
P. Walters, {\it An introduction to ergodic theory}, Springer Verlag, 1982.






%\bibitem{Bur12}
%D. Burguet,  Symbolic extensions in intermediate smoothness on surfaces, \emph{Ann. Sci. \'Ec. Norm. Sup\'er.}, {\bf 45}(2012), 337--362. 
%
%%\bibitem{Bur20}D. Burguet,  Entropy of physical measures for $\mathcal C^\infty$ dynamical systems, {\it Comm. Math. Phys.}, {\bf 375}(2020), 1201--1222.
%
%\bibitem{Bur24}
%D. Burguet,  
%SRB measures for $C^\infty$ surface diffeomorphisms, {\it Invent. Math.}, {\bf235}(2024), 1019-1062.
%
%\bibitem{Bur24P}
%D. Burguet,  
%Maximal measure and entropic continuity of Lyapunov exponents for $\mathcal C^r$ surface diffeomorphisms with large entropy, 
%{\it Ann. Henri Poincar\'e}, {\bf25}(2024), 1485-1510.
%
%\bibitem{BL2022}
%D. Burguet,  G. Liao,
%Symbolic extensions for 3-dimensional diffeomorphisms,
%{\it  J. Anal. Math}, {\bf 145}(2021), 381-400.
%%\bibitem{BCL23}J. Buzzi, S. Crovisier and Y. Lima, Symbolic dynamics for large non-uniformly hyperbolic sets of three dimensional flows, {\it preprint}, 2003. ArXiv:2307.09011.
%
%%\bibitem{BCS22}
%%J. Buzzi, S. Crovisier, and O. Sarig, 
%%Finiteness of measures maximizing the entropy for surface diffeomorphisms, 
%%{\it Ann. Math.}, {\bf195}(2022), 421--508.
%
%\bibitem{Buz97}
%J. Buzzi, Intrinsic ergodicity of smooth interval maps, 
%{\it Isr. J. Math.}, {\bf 100}(1997), 125--161.
%
%\bibitem{Buz14}
%J. Buzzi, $C^r$ surface diffeomorphisms with no maximal entropy measure, 
%{\it Ergodic Theory Dynam. Systems}, {\bf 34}(2014), 1770--1793.
%
%\bibitem{BCS22}
%J. Buzzi, S. Crovisier, and O. Sarig, 
%Continuity properties of Lyapunov exponents for surface diffeomorphisms, 
%{\it Invent. Math.}, {\bf230}(2022), 767-849.
%
%\bibitem{BCS25}
%J. Buzzi, S. Crovisier, and O. Sarig, 
%Strong positive recurrence and exponential mixing for diffeomorphisms, {\it preprint}, 2025. arXiv: 2501.07455.
%
%\bibitem{BLY25}
%J. Buzzi, C. Luo, and D. Yang,
%Uniform finiteness of  measures of maximal entropy for $C^r$ surface diffeomorphisms with large entropy,  {\it preprint}, 2025.
%
%\bibitem{DN05}
%T. Downarowicz, S. Newhouse,
%Symbolic extensions and smooth dynamical systems, {\it Invent. Math.}, {\bf 160}(2005), 453-499.

%\bibitem{Buz14}
%J. Buzzi, $\mathcal C^r$ surface diffeomorphisms with no maximal entropy measure, 
%{\it Ergodic Theory Dynam. Systems}, {\bf 34}(2014), 1770--1793.

%\bibitem{BCS23}J. Buzzi, S. Crovisier and O. Sarig,
%On the existence of SRB measures for $C^\infty$ surface diffeomorphisms,
%{\it Int. Math. Res. Not. IMRN}, {\bf 24}(2023), 20812-20826.
 
% \bibitem{BCS24}J. Buzzi, S. Crovisier and O. Sarig,
%SPR paper,
%{\it preprint}, 2024.
 
%\bibitem{CPL22} V. Climenhaga, S. Luzzatto and Y. Pesin,
%SRB measures and Young towers for surface diffeomorphisms,
%{\it Ann. Henri Poincar\'{e}}, {\bf 23}(2022), 973-1059.
%\bibitem[CQ]{CQ}
%\newblock J. Campbell and A. Quas,
%\newblock  A generic $C^1$ expanding map has a singular S-R-B measure. 
%\newblock \emph{Comm. Math. Phys.} \textbf{221} (2011), 335--349. 

%\bibitem[CaY]{CaY}
%\newblock Y. Cao and D. Yang,
%\newblock On Pesin's entropy formula for dominated splittings without mixed behavior.
%\newblock \emph{J. Differential Equations} {\bf 261} (2016), 3964--3986. 

%\bibitem[CCE]{CCE}  
%\newblock E. Catsigeras, M. Cerminara and H. Enrich,  
%\newblock The Pesin entropy formula for $C^1$ diffeomorphisms with dominated splitting. 
%\newblock \emph{Ergodic Theory Dynam. Systems}  {\bf 35} (2015), 737--761. 

%\bibitem[Ch]{C}
%\newblock T. M. Cherry, 
%\newblock Analytic quasi-periodic curves of discontinuous type on the torus. 
%\newblock \emph{Proc. London Math. Soc.} {\bf44} (1938), 175--215.	


%
%	
%\bibitem[Co]{Conley}	
%\newblock C. Conley, 
%\newblock \emph{Isolated invariant sets and the Morse index.}
%\newblock  American Mathematical Society, Providence, R.I., 1978.
%	
%\bibitem[CFS]{CFS} 
%\newblock P. Cornfeld, S.  Fomin  and Y. Sina\v{\i}, 
%\newblock \emph{Ergodic theory}. 
%\newblock 	Springer-Verlag, New York, 1982.
	


%\bibitem[CdLYZ]{CYZ2} 
%\newblock S. Crovisier, A. da Luz, D. Yang and J. Zhang, 
%\newblock On the notions of singular domination and (multi-)singular hyperbolicity.  
%\newblock \emph{Sci. China Math.} {\bf 63} (2020), 1721--1744.

%\bibitem{CrP18}S. Crovisier and E. Pujals, Strongly dissipative surface diffeomorphisms, {\it Comment. Math. Helv.}, {\bf93}(2018), 377--400.

%	
%\bibitem[CY]{CY} 
%\newblock S. Crovisier and D. Yang, 
%\newblock Robust transitivity of singular hyperbolic attractors. 
%\newblock \emph{Math. Z.} {\bf 298} (2021),  469--488.
%	
%\bibitem[CYZ]{CYZ} 
%\newblock S. Crovisier, D. Yang and J. Zhang, 
%\newblock Empirical measures of partially hyperbolic attractors.
% \newblock {\it  Comm. Math. Phys.} {\bf 375} (2020), 725--746.
%	
%
%	
%\bibitem[dL]{dL}
%\newblock A. da Luz, 
%\newblock Star flows with singularities of different indices. 
%\newblock ArXiv:1806.09011.
%
%\bibitem{Gan02} 
% S. Gan,  
%Horseshoe and entropy for $C^1$ surface diffeomorphisms,
%\emph{Nonlinearity}, {\bf 15}(2002), 841-848. 

%
%\bibitem[G]{G}
%\newblock S. Gan, 
%\newblock A generalized shadowing lemma. 
%\newblock \textit{Discrete Contin. Dyn. Syst.} {\bf 8} (2002),  627--632.
	
%\bibitem[GW]{GW} 
%\newblock S. Gan and L. Wen,  
%\newblock Nonsingular star flows satisfy Axiom A and the no-cycle condition. 
%\newblock \emph{Invent. Math.}  {\bf 164} (2006), 279--315.
%	
%\bibitem{GaY18} 
% S. Gan and D. Yang,  
%Morse-Smale systems and horseshoes for three dimensional singular flows,
%\emph{Ann. Sci. \'Ec. Norm. Sup\'er.}, {\bf 51}(2018), 39--112. 
%
%\bibitem[GYZ]{GYZ}
%\newblock S. Gan, J. Yang and R. Zheng,
%\newblock Lyapunov stable chain recurrence classes for singular flows.
%\newblock ArXiv:2202.09742.  

%\bibitem{Gel16}
%K. Gelfert, Horseshoes for diffeomorphisms preserving hyperbolic measures, 
%{\it Math. Z.}, {\bf 283}(2016), 685-701.


%\bibitem{Goo71}
%T.  Goodman, 
%Relating topological entropy and measure entropy, 
%{\it Bull. London Math. Soc.}, {\bf3}(1971), 176--180.
%\bibitem{Gro87}M. Gromov, Entropy, homology and semialgebraic geometry, S\'eminaire Bourbaki,
%vol. 1985/86, exp. no 663, {\it Ast\'erisque}, {\bf145--146}(1987), 225--240.
%

% 
%\bibitem[H]{Ha} 
%\newblock S. Hayashi, 
%\newblock Diffeomorphisms in $\mathcal{F}^1(M)$ satisfy Axiom A. 
%\newblock \textit{Ergodic Theory $\&$ Dynam. Systems} \textbf{12} (1992), 233--253.


%\bibitem{Hen24}Julia X\'enelkis de H\'enon, H\'enon maps: a lift of open problems, to appear in {\it Arnold Math. J.}, 2024.

%	
%\bibitem{HPS77} M. Hirsch, C. Pugh and M. Shub,  Invariant manifolds. \emph{Lecture Notes in Mathematics}, \textbf{583}. Springer--Verlag, Berlin--New York, 1977. 

%
%\bibitem[HYY]{HYY}
%\newblock Y. Hua, F. Yang and J. Yang,
%\newblock A new criterion of physical measures for partially hyperbolic diffeomorphisms.
%\newblock \emph{Trans. Amer. Math. Soc.} \textbf{373}(2020), 385--417.

%\bibitem{Kat80}A. Katok,  Lyapunov exponents, entropy and periodic orbits for diffeomorphisms, {\it Inst. Hautes \'Etudes Sci. Publ. Math.}, \textbf{51}(1980), 137--173.

%\bibitem[Le]{Le}
%\newblock F. Ledrappier,
%\newblock Propri\'et\'es ergodiques des mesures de Sina\"{\i}.
%\newblock {\it Publ. Math. Inst. Hautes \'Etudes Sci.} \textbf{59} (1984), 163--188.

%\bibitem{LeS82}F. Ledrappier and J. Strelcyn, A proof of the estimation from below in Pesin’s entropy formula, {\it Ergod. Theory Dyn. Syst.}, {\bf2}(1982), 203--219.

%\bibitem{LeY85} F.  Ledrappier  and L-S.  Young, The metric entropy of diffeomorphisms. I. Characterization of measures satisfying Pesin's entropy formula, {\it Ann. of Math.} \textbf{122} (1985), 509--539.


%\bibitem{LeY85} F.  Ledrappier  and L-S.  Young, The metric entropy of diffeomorphisms. II. Relations between entropy, exponents and dimension, {\it Ann. Math.}, \textbf{122} (1985), 540--574.

	
%	
%\bibitem[LeYa]{LeYa}	
%\newblock R. Leplaideur and D. Yang, 
%\newblock SRB measures for higher dimensional singular partially hyperbolic attractors. 
%\newblock {\it Ann. Inst. Fourier (Grenoble)} \textbf{67} (2017), 2703--2717. 
    
	
%\bibitem[LGW]{LGW} 
%\newblock M. Li,  S. Gan and L. Wen, 
%\newblock Robustly transitive singular sets via approach of an extended linear Poincar\'e flow. 
%\newblock {\it Discrete Contin. Dyn. Syst.} \textbf{13} (2005), 239--269.
%
%\bibitem[LL]{LL}
%\newblock M. Li and C. Liang,
%\newblock  A closing lemma for non-uniformly hyperbolic singular flows.
%\newblock Preprint.
%		
%\bibitem[L1]{L79}
%\newblock S. Liao, 
%\newblock A basic property of a certain class of differential systems. 
%\newblock {\it Acta Math. Sinica} \textbf{22} (1979), 316--343.
%	
%\bibitem[L2]{L80}
%\newblock S. Liao, 
%\newblock On the stability conjecture. 
%\newblock {\it Chinese Ann. Math.} \textbf{1} (1980), 9--30.
%	
%	
%\bibitem[L3]{L} 
%\newblock S. Liao, 
%\newblock  On $(\eta,d)$-contractible orbits of vector fields. 
%\newblock {\it Systems Sci. Math. Sci.} \textbf{2} (1989), 193--227.
%	
%	
%\bibitem[Lo]{Lo}  
%\newblock E. N.	Lorenz, 
%\newblock Deterministic Non-periodic Flow.	
%\newblock {\it J. Atmos. Sci.} \textbf{20} (1963),   130--141.
%\bibitem{LuY24} C. Luo and D. Yang, Ergodic measures with large entropy have long unstable manifolds for $C^{\infty}$ surface diffeomorphisms, {\it arXiv 2410.07979}, (2024), 1-24.
	
%\bibitem[M1]{Ma82}
%\newblock R. Ma\~n\'e, 
%\newblock An ergodic closing lemma. 
%\newblock {\it Ann. of Math.} \textbf{116} (1982), 503--540.
%
%\bibitem[M2]{Ma88} 
%\newblock R. Ma\~{n}\'{e}, 
%\newblock A proof of the $C^1$ stability conjecture. 
%\newblock \textit{Publ. Math. Inst. Hautes \'{E}tudes Sci.} \textbf{66} (1988), 161--210.

%\bibitem{Mis73}M. Misiurewicz, Diffeomorphism without any measure with maximal entropy, {\it Bull. Acad. Polon. Sci. S\'er. Sci. Math. Astronom. Phys.}, {\bf21}(1973), 903--910.
%
	
%\bibitem[MP]{MP}
%\newblock C. Morales and M. Pacifico, 
%\newblock Lyapunov stability of $\omega$-limit sets. 
%\newblock {\it Discrete Contin. Dyn. Syst.} \textbf{8} (2002), 671--674.
%	
%\bibitem{MPP04}
%C. Morales, M. Pacifico and E. Pujals, Robust transitive singular sets for 3-flows are partially hyperbolic attractors or repellers, {\it Ann. Math.}, \textbf{160} (2004), 1--58.
%	
%\bibitem[N1]{N} 
%\newblock S.  Newhouse, 
%\newblock Lectures on dynamical systems.
%\newblock  CIME Lectures, Bressanone, Italy, June1978. \textit{Progress in Mathematics} \textbf{81},  Birkhauser (1980),  1--114.
%%
%\bibitem{New89}S. Newhouse, Continuity properties of entropy, {\it Ann. Math.}, {\bf 129} (1989),  215--235.

%\bibitem{New91}S.  Newhouse, Entropy in smooth dynamical systems,  {\it Proceedings of the International Congress of Mathematicians, Vol. I, II}, (Kyoto, 1990), Math. Soc. Japan, Tokyo, 1991, pp. 1285--1294.

%\bibitem{Oba21}D. Obata, Uniqueness of the measure of maximal entropy for the standard map,
%{\it Comment. Math. Helv.}, {\bf96}(2021), 79--111.
%
%\bibitem{Ose68}V. I. Oseledets, A multiplicative ergodic theorem: Lyapunov characteristic numbers for dynamical systems,\textit{Trans. Moscow. Math. Soc.}, \textbf{19}(1968), 197--231.

%\bibitem[PYY1]{PYY} 
%\newblock M. Pacifico, F. Yang and J. Yang, 
%\newblock Entropy theory for sectional hyperbolic flows.  
%\newblock \textit{Ann. Inst. H. Poincar\'e C Anal. Non Lin\'eaire} {\bf 38} (2021),  1001--1030.
%
%\bibitem[PYY2]{PYY2} 
%\newblock M. Pacifico, F. Yang and J. Yang, 
%\newblock An entropy dichotomy for singular star flows.
%\newblock ArXiv:2101.09480.
%\bibitem[PdM]{Palis-de Melo}
%\newblock J. Palis and W. de Melo,
%\newblock \emph{Geometric theory of dynamical systems. An introduction.}
%\newblock Springer-Verlag, New York-Berlin, 1982. 
%	
%\bibitem{Ova21} S. Ben Ovadia,
%Hyperbolic SRB measures and the leaf condition
%{\it Commun. Math. Phys.}, {\bf387}(2021), 1353-1404.
%\bibitem{Pes76}Y. Pesin, 
%Families of invariant manifolds that correspond to nonzero characteristic exponents (Russian),
%{\it Izv. Akad. Nauk SSSR Ser. Mat.}, {\bf40}(1976), 1332--1379.
%\bibitem{Pes77}Y. Pesin, Characteristic Lyapunov exponents and smooth ergodic theory, {\it Russian Math. Surveys}, {\bf32}(1977), 55-114.
%
%	
%\bibitem[P]{P} 
%\newblock V. Pliss, 
%\newblock On a conjecture due to Sm
%\bibitem[PS]{PS}
%\newblock E. Pujals and M. Sambarino, 
%\newblock Homoclinic tangencies and hyperbolicity for surface diffeomorphisms. 
%\newblock {\it Ann.of Math. } \textbf{151} (2000), 961--1023.
%
%\bibitem[Q]{Q}
%\newblock H. Qiu,
%\newblock Existence and uniqueness of SRB measure on $C^1$ generic hyperbolic attractors.
%\newblock {\it Comm. Math. Phys.} {\bf320} (2011), 345--357.

%\bibitem{Rok67}
%V. Rokhlin, Lectures on the entropy theory of measure-preserving transformations, {\it Russ. Math.
%Surv.}, {\bf22}(1967), 1--52.
%
%%\bibitem[R1]{Ruelle}
%%\newblock D. Ruelle, 
%%\newblock A measure associated with axiom-A attractors. 
%%\newblock {\it Amer. J. Math.} {\bf 98} (1976),  619--654.
%
%\bibitem{Rue78}
%D. Ruelle,  An inequality for the entropy of differentiable maps, {\it Bol. Soc. Brasil. Mat.}, {\bf9}(1978), 83--87. 	

% \bibitem{Rue79}D. Ruelle, Ergodic theory of differentiable dynamical systems, {\it Inst. Hautes \'Etudes Sci. Publ. Math.}, {\bf50}(1979), 27--58.
%\bibitem[SS]{SS}
%\newblock R. Sacksteder and M. Shub,  
%\newblock Entropy on sphere bundles. 
%\newblock {\it Adv.   Math. } \textbf{28} (1978), 174--177.
%\bibitem{Sma67} S. Smale, Differentiable dynamical systems, {\it Bull. Amer. Math. Soc.}, {\bf 73}(1967), 747-817.
%\bibitem{Sar13}O. Sarig, Symbolic dynamics for surface diffeomorphisms with positive entropy, {\it J. Amer. Math. Soc.}, {\bf 26}(2013), 341--426.
%\bibitem[SGW]{SGW} 
%\newblock Y. Shi,  S. Gan and L. Wen, 
%\newblock On the singular-hyperbolicity of star flows.   
%\newblock {\it J. Mod. Dyn.} \textbf{8} (2014), 191--219.
%\bibitem[SYY]{SYY}
%\newblock  Y. Shi, F. Yang and J. Yang,
%\newblock  A countable partition for singular flows, and its application on the entropy theory.
%\newblock \textit{Israel J. Math.} \textbf{249} (2022), 375--429.
%\bibitem[S]{Sinai}
%\newblock Y. G. Sina\"{\i}, 
%\newblock Gibbs measures in ergodic theory. 
%\newblock {\it Usp. Mat. Nauk} {\bf 27} (1972), 21--64.
%\bibitem[SV]{SV}
%\newblock W. Sun and E. Vargas,  
%\newblock Entropy and ergodic probability for differentiable dynamical systems and their bundle extensions. 
%\newblock {\it Topology Appl.} \textbf{154} (2007), 683--697. 
%\bibitem[T]{T}
%\newblock M. Tsujii, 
%\newblock Regular points for ergodic Sina\"{\i} measures. 
%\newblock {\it Trans. Amer. Math. Soc.} {\bf328} (1991), 747--766.
%

%\bibitem{Wal82}
%P. Walters, An Introduction to Ergodic Theory, \emph{Graduate Texts in Mathematics}, {\bf79}, Springer-Verlag, 1982.

%
%\bibitem[W1]{W02} 
%\newblock L. Wen, 
%\newblock A uniform $C^1$ connecting lemma. 
%\newblock {\it Discrete Contin. Dyn. Syst.} \textbf{8} (2002), 257--265.
%\bibitem[W2]{W16} 
%\newblock L. Wen,  
%\newblock \it Differentiable dynamical systems. 
%\newblock An introduction to structural stability and hyperbolicity.\rm
%\newblock  Graduate Studies in Mathematics, \textbf{173}. American Mathematical Society, Providence, RI, 2016. 
%
%
%
%\bibitem[WX]{WX}
%\newblock L. Wen and Z. Xia, 
%\newblock $C^1$ connecting lemmas. 
%\newblock {\it Trans. Amer. Math. Soc.} \textbf{352} (2000), 5213--5230.
%	
%\bibitem{Yom87}Y. Yomdin, Volume growth and entropy, {\it Isr. J. Math.}, {\bf57}(1987), 285--300.
%
%\bibitem{Young82}L-S. Young, Dimension, entropy and Lyapunov exponents, {\it Ergod. Theory Dyn. Syst.}, {\bf2}(1982), 109--124.

%\bibitem{Zan22}Y. Zang, Entropies and volume growth of unstable manifolds, {\it Ergodic Theory Dynam. Systems}, {\bf42}(2022), 1576–1590.
%
%	
%\bibitem[YZ]{YZ}
%\newblock D. Yang and Y. Zhang, 
%\newblock On the finiteness of uniform sinks.
%\newblock {\it J. Differential Equations} \textbf{57} (2014), 2102--2114. 
%	
%	
%\bibitem[Z]{Z} 
%\newblock R. Zheng, 
%\newblock Partial hyperbolicity of vector fields away from horseshoe. 
%\newblock Thesis (Ph.D.)--Peking University. 2015. 
%    
\end{thebibliography}
\end{document}